\theoremstyle{definition} 
\newtheorem{theorem}{Theorem}[section]
\newtheorem{corollary}[theorem]{Corollary}
\newtheorem{lemma}[theorem]{Lemma}
\newtheorem{proposition}[theorem]{Proposition}
\newtheorem{definition}[theorem]{Definition}
\newtheorem{example}[theorem]{Example}
\newtheorem{remark}[theorem]{Remark}
\newtheorem{conjecture}[theorem]{Conjecture}
\newtheorem{question}[theorem]{Question}
\newcommand{\scramble}{\mathcal{S}}
\newcommand{\vast}{\bBigg@{3}}
\DeclareMathOperator{\val}{val}
\DeclareMathOperator{\tw}{tw}
\DeclareMathOperator{\gon}{gon}
\DeclareMathOperator{\scw}{scw}
\DeclareMathOperator{\adh}{adh}
\DeclareMathOperator{\lw}{lw}
\DeclareMathOperator{\bw}{bw}
\DeclareMathOperator{\cart}{cart}
\DeclareMathOperator{\dsn}{dsn}
\DeclareMathOperator{\sn}{sn}
\DeclareMathOperator{\bn}{bn}
\DeclareMathOperator{\vcon}{vcon}
\title{On the size and complexity of scrambles}
\author{Seamus Connor, Steven DiSilvio, Sasha Kononova, Ralph Morrison, and Krish Singal}
\date{}
\begin{document}

\maketitle

\begin{changemargin}{1cm}{1cm} 

\begin{center}
    \textbf{Abstract}
\end{center}

The scramble number of a graph, a natural generalization of bramble number, is an invariant recently developed to study chip-firing games and graph gonality. We introduce the carton number of a graph, defined to be the minimum size of a maximum order scramble, to study the computational complexity of scramble number. We show that there exist graphs with carton number exponential in the size of the graph, proving that scrambles cannot serve as an NP certificatest. 
We characterize families of graphs whose scramble number and gonality can be constant-factor approximated in polynomial time and show that the disjoint version of scramble number is fixed parameter tractable. Lastly, we find that vertex congestion is an upper bound on screewidth and thus scramble number, leading to a new proof of the best known bound on the treewidth of line graphs and a bound on the scramble number of planar graphs with bounded degree.

\end{changemargin}

\section{Introduction}

The concept of gonality, initially from the theory of algebraic curves, has analogues in several other fields of math. One such analogue appears in the context of chip-firing games on graphs, where the gonality of a graph is the smallest degree of a positive rank divisor on that graph. In other words, it is the smallest number of chips that can be distributed among the graph's vertices such that debt can be eliminated, using chip-firing moves, after $-1$ chips are placed on any vertex. Here a chip-firing move is made by choosing a vertex and moving one chip from the vertex to its neighbors along each edge.

To prove the gonality of a graph is equal to $k$, one has to argue $k$ serves as both an upper and a lower bound.  To argue the upper bound, it suffices to provide a positive rank divisor of this graph with degree $k$. Verifying the divisor in fact has positive rank can be done in polynomial time via repeated applications of Dhar's burning algorithm \cite{dhar}. On the other hand, proving lower bounds on gonality is often much harder, as the brute-force approach of showing each of $\Omega(n^k)$-many divisors of degree $k-1$ have rank less than $1$ is not computationally tractable in general.

A powerful lower bound on the gonality of a graph $G$ is the much-studied invariant of treewidth, $\tw(G)$, as proved in \cite{debruyn2014treewidth}.  This was improved upon in \cite{new_lower_bound}, which introduced the scramble number of a graph, denoted $\sn(G)$. A scramble on a graph $G$ is a set of connected subgraphs (called \emph{eggs}) of $G$. The order of a scramble is a quantity associated with the hitting number and connectivity between the eggs. The scramble number of a graph $G$ is the maximum order of a scramble on $G$. From \cite[Theorem 1.1]{new_lower_bound}, we have
\begin{align*}
    \tw(G) \leq \sn(G) \leq \gon(G).
\end{align*}

Echavarria et al.\ showed in 2021 that $\mathrm{sn}(G)$ is NP-hard to compute \cite{echavarria2021scramble}. However, it is not known whether lower-bounding (or upper-bounding) scramble number is in NP. While a maximum-order scramble would be a natural NP-certificate candidate for a lower bound, the size of a scramble (as measured by the number of eggs) is potentially exponential in the number of vertices of $G$. Many different scrambles may have order equal to scramble number, so it is natural to ask about the size of the smallest such scramble. To study this certificate, we introduce the \emph{carton number of a graph}, $\cart(G)$, defined as the minimum size of a maximum order scramble. 

Our first result establishes a general lower bound on the carton number of graphs degree bounded by their scramble number.  Below $\Delta(G)$ denotes the maximum degree of a graph, and $V(G)$ its set of vertices.

\begin{theorem} \label{general-lower-bound}
    Let $G$ be a graph such that $\Delta(G) < \sn(G)$. Then, $\cart(G) \geq 3 \sn(G) - |V(G)|$. 
\end{theorem}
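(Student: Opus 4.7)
The plan is to take any maximum-order scramble $\scramble$ with $k$ eggs and order $s := \sn(G)$, exploit a minimum hitting set and a Hall-style exchange in an auxiliary bipartite graph, and lower-bound $k$ via the resulting deficiency. First I would use $\Delta(G) < s$ to rule out singleton eggs: if some egg $\{v\}$ existed, then for any egg disjoint from it the edges incident to $v$ form a cut of size $\deg(v) \leq \Delta(G) < s$ separating them, contradicting the egg-connectivity condition of the scramble; and if every other egg contains $v$ then $\{v\}$ is a hitting set of size $1$, forcing $s \leq 1$ and making the claimed bound trivial. So every egg has at least two vertices.

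Next I would fix a minimum hitting set $T$ of size $s$ and, for each $t \in T$, pick a ``critical'' egg $E_t$ with $E_t \cap T = \{t\}$ (which exists by minimality of $T$); the $E_t$ are pairwise distinct, giving a first block of $s$ eggs. I would then form the bipartite graph $B$ with parts $T$ and $V(G) \setminus T$ and edges $\{t, u\}$ whenever $u \in E_t \setminus \{t\}$. Because each $E_t$ has size at least two, every $t \in T$ has $\deg_B(t) \geq 1$, and since the right part of $B$ has only $n - s$ vertices, the maximum matching satisfies $\nu(B) \leq n - s$, so the K\"onig deficiency $d := s - \nu(B)$ satisfies $d \geq \max(0, 2s - n)$.

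The crux is the following swap. Pick $T_0 \subseteq T$ realizing the deficiency, $|T_0| - |N_B(T_0)| = d$, set $U_0 := N_B(T_0)$, and form $T' := (T \setminus T_0) \cup U_0$, of size $s - d$. Every critical egg is still hit by $T'$ (an $E_t$ with $t \in T_0$ is hit because $E_t \setminus \{t\} \subseteq U_0$), and no $E_t$ lies entirely in $X := V(G) \setminus T'$. Since $|T'| < s$, $T'$ cannot be a hitting set of $\scramble$, so some ``missed'' eggs (necessarily distinct from every $E_t$) lie entirely in $X$; any hitting set $S' \subseteq X$ of this missed sub-scramble would combine with $T'$ (which is disjoint from $X$) to give a hitting set of $\scramble$ of size $(s - d) + |S'|$, forcing $|S'| \geq d$. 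Therefore there are at least $d$ missed eggs, yielding $k \geq s + d \geq 3s - n$. The part I expect to require the most care is organizing the swap so that the critical eggs all remain hit by $T'$ and none of the missed eggs coincide with any $E_t$; the bound $\nu(B) \leq n - s$ supplying the decisive $2s - n$ is then immediate from the bipartite structure.
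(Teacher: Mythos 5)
Your proof is correct, and its first half (excluding singleton eggs via $\Delta(G)<\sn(G)$, then extracting one critical egg $E_t$ with $E_t\cap T=\{t\}$ for each vertex of a minimum hitting set) coincides with the paper's argument. Where you diverge is in counting the additional eggs. The paper takes the blunter route: since every critical egg has a second vertex outside $T$, the set $B=V(G)\setminus T$, of size at most $n-\sn(G)$, already hits all critical eggs; then, because appending a single egg to a scramble can raise the hitting number by at most one, reaching $h(\mathcal S)\geq \sn(G)$ from a sub-scramble hittable by $B$ forces at least $\sn(G)-(n-\sn(G))$ further eggs, giving $|\mathcal S|\geq 3\sn(G)-n$ directly with no matching theory. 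Your K\"onig/Hall deficiency swap replaces that step: you build a hitting set $T'$ of the critical eggs of size $s-d$ with $d\geq 2s-n$, and then argue the missed eggs alone require $d$ vertices, hence $d$ eggs. This buys you the formally sharper intermediate bound $|\mathcal S|\geq s+d$, where $d$ is the Hall deficiency of your auxiliary bipartite graph and can exceed $2s-n$; the cost is extra machinery that the theorem as stated does not need. Two small points to tidy in a final write-up: a minimum hitting set of an arbitrary maximum-order scramble has size $h(\mathcal S)\geq\sn(G)$, not necessarily exactly $\sn(G)$ (either invoke the paper's observation that carton scrambles satisfy $h(\mathcal S_{\cart})=\sn(G)$, or note your inequalities only improve when $|T|>s$); and the sentence ``since $|T'|<s$'' silently assumes $d>0$, so the case $d=0$ (where $3s-n\leq s\leq|\mathcal S|$ is immediate) should be dispatched separately.
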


 Our second  result confirms our suspicion regarding the invalidity of scrambles as NP-certificates. 

\begin{theorem} \label{expander-graphs} 
    For any $d\geq 3$, there exists a family of $d$-regular graphs $(G_n)_{n \geq 1}$ such that
    \begin{enumerate}
        \item $\sn(G) = \Omega(n)$, and
        \item $\cart(G) = 2^{\Omega(\sqrt{n})}$.
    \end{enumerate}
\end{theorem}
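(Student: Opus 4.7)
The plan is to take $(G_n)_{n \geq 1}$ to be a family of $d$-regular expander graphs on $n$ vertices: for instance random $d$-regular graphs (which are expanders with high probability), or explicit Ramanujan graphs when $d = p+1$ for an appropriate prime $p$. For part (1), I would combine the inequality $\tw(G) \leq \sn(G)$ recorded in the introduction with the classical fact that $d$-regular expanders have treewidth $\Omega(n)$ (every balanced vertex separator has size $\Omega(n)$ by the vertex isoperimetric inequality), immediately yielding $\sn(G_n) = \Omega(n)$.

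For part (2), the approach is to consider an arbitrary maximum-order scramble $\cS$ on $G_n$ and bound $|\cS|$ from below. Since $\Delta(G_n) = d$ is a constant while $\sn(G_n) = \Omega(n)$, Theorem \ref{general-lower-bound} already supplies the \emph{linear} bound $|\cS| \geq 3\sn(G_n) - n = \Omega(n)$. To push this to the exponential bound $2^{\Omega(\sqrt{n})}$, I would try to identify a substructure $T \subseteq V(G_n)$ of size $\Omega(\sqrt{n})$ together with a rich combinatorial family of connected subgraphs indexed by subsets of $T$, such that every element of this family must occur as an egg of $\cS$; any omission would enable one to find either a hitter of size less than $\sn(G_n)$ or a small egg-cut, contradicting the maximality of the scramble order.

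The main obstacle is executing this forcing argument. Expansion directly supplies only polynomial lower bounds on the sizes of cuts, hitters, and separators, whereas here we need a lower bound on the \emph{number} of distinct eggs. I expect the proof to exploit a ``flexibility'' phenomenon: many different candidate eggs are each simultaneously valid for a max-order scramble, and any attempt to build such a scramble from sub-exponentially many of them can be exploited to construct a smaller hitter or a smaller separating edge-cut. Formalizing this flexibility—perhaps via a probabilistic argument, a spectral or random-walk argument on the expander, or an explicit product-type construction in which eggs must ``align'' with $\Omega(\sqrt{n})$ independent coordinates—is the most technically demanding step. The $\sqrt{n}$ in the exponent in particular hints at a two-dimensional structural feature, such as a carefully embedded cross-product or a configuration of $\sqrt{n}$ independent binary choices, each of which the construction must be engineered to exhibit while preserving $d$-regularity.
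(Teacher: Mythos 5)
Your choice of graphs and your argument for part (1) match the paper: the paper takes the $d$-regular expander families of Hoory--Linial, applies the Grohe--Marx bound $\tw(G) \geq \lfloor \mathrm{vx}_{\alpha}(G)\cdot(\alpha/2)\cdot n\rfloor$ to get $\tw(G_n)=\Omega(n)$, and concludes $\sn(G_n)=\Omega(n)$ from $\tw\leq\sn$. That part is fine.

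Part (2) is where the genuine gap lies, and you have in effect conceded it yourself: the ``forcing'' step you describe is the entire content of the theorem, and the mechanism you sketch is not the one that works. You propose to exhibit a specific family of $2^{\Omega(\sqrt{n})}$ connected subgraphs, each of which \emph{must} appear as an egg of any maximum-order scramble. But maximum-order scrambles are highly non-unique and no individual egg is ever forced (e.g.\ one can always enlarge or perturb eggs without changing the order), so an argument that pins down mandatory eggs is unlikely to be executable. The paper's argument runs in the opposite direction: it is a counting bound on an \emph{arbitrary} scramble of large order, with no structural substructure $T$ needed. Concretely (Theorem \ref{exponential-carton-number}, following Grohe--Marx): if $\|\mathcal{S}\|\geq \lceil c\,n^{1/2+\epsilon}\rceil$ on a graph of maximum degree $d$, then first every egg must have at least $\frac{c\,n^{1/2}}{d+1}$ vertices --- a smaller egg $A$ would either have edge boundary of size at most $c\,n^{1/2}$, giving a too-small egg-cut against any egg disjoint from $A$, or else meet every egg and hence be a too-small hitting set. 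Second, sample $\ell=\lceil n^{1/2+\epsilon}\rceil$ vertices uniformly at random; each egg is missed with probability at most $\exp\left(-\frac{c\,n^{\epsilon}}{d+1}\right)$, so by a union bound a scramble with fewer than $\exp\left(\frac{c\,n^{\epsilon}}{d+1}\right)$ eggs would admit a hitting set of size $\ell<h(\mathcal{S})$, a contradiction. Taking $\epsilon=1/2$ (available since $\|\mathcal{S}\|=\Omega(n)$) yields $|\mathcal{S}|=2^{\Omega(\sqrt{n})}$; this also explains the $\sqrt{n}$ in the exponent, which comes from balancing egg size against boundary size in a bounded-degree graph rather than from any two-dimensional embedded structure. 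You mention a probabilistic argument as one possible route, but without the two concrete ingredients above --- the minimum egg size forced by bounded degree, and the union bound over eggs --- the proposal does not constitute a proof of part (2).
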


This result definitively invalidates scrambles as NP-certificates in the general case. We also consider the related invariant of disjoint scramble number, denoted $\dsn(G)$, introduced in \cite{sn_two}.  This is defined identically to $\sn(G)$, except that we restrict to scrambles where the eggs are non-overlapping. Deciding whether $\dsn(G) \geq k$ is in NP since any disjoint scramble necessarily has polynomial size and its order can be computed in polynomial time; however, it is unknown whether computing $\dsn(G)$ is NP-hard.  We prove the following result regarding its computational complexity.

\begin{theorem} \label{dsn-fpt}
    Deciding whether $\dsn(G) \geq k$ is fixed parameter tractable when parametrized by $\tw(G)$ and $k$. 
\end{theorem}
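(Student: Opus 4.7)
The plan is to reformulate the condition $\dsn(G)\geq k$ as a combinatorial property that can be encoded by a monadic second-order sentence (with edge set quantification, i.e.\ MSO$_2$) whose size depends only on $k$, and then invoke Courcelle's theorem on graphs of bounded treewidth.

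The starting observation is that disjointness forces the hitting number of a scramble to equal its egg count: if the eggs of $\mathcal{S}$ are pairwise vertex-disjoint then every hitting set must contain at least one vertex per egg, and picking one vertex per egg suffices, so $h(\mathcal{S})=|\mathcal{S}|$. Since discarding eggs cannot decrease the pairwise edge connectivity $e(\mathcal{S})$ (a minimum taken over fewer pairs), one may restrict to scrambles of size exactly $k$. Therefore $\dsn(G)\geq k$ holds if and only if there exist $k$ pairwise vertex-disjoint non-empty connected subgraphs $E_1,\ldots,E_k$ of $G$ such that the minimum edge cut between $E_i$ and $E_j$ in $G$ is at least $k$ for every $i\neq j$.

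I would then encode this condition as an MSO$_2$ sentence $\varphi_k$. Existentially quantify vertex sets $X_1,\ldots,X_k$ and use the standard MSO$_2$-expressible predicates to assert non-emptiness, pairwise disjointness, and connectivity of each $X_i$ in $G$. For the edge-cut constraint between $X_i$ and $X_j$ I would invoke Menger's theorem, replacing ``minimum edge cut at least $k$'' by the existence of $k$ pairwise edge-disjoint $X_i$--$X_j$ paths. Concretely, for each ordered pair $(i,j)$ existentially quantify $k$ edge sets $P^{ij}_1,\ldots,P^{ij}_k$, require them to be pairwise disjoint, and assert that each connects a vertex of $X_i$ to a vertex of $X_j$ in the subgraph it determines, using the standard MSO$_2$ connectivity predicate restricted to an edge set. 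The resulting sentence has size polynomial in $k$ and no dependence on $G$.

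Courcelle's theorem then decides $G\models\varphi_k$ in time $f(|\varphi_k|,\tw(G))\cdot |V(G)| = g(k,\tw(G))\cdot |V(G)|$, giving the FPT bound. The main conceptual step is the opening reformulation that replaces the hitting number by the egg count and so turns ``$\dsn(G)\geq k$'' into a statement about a bounded number of objects; the main technical subtlety is capturing the minimum edge cut lower bound by an MSO$_2$ formula of size depending only on $k$, which the Menger-based encoding handles. A direct dynamic program on a tree decomposition would also work, tracking at each bag the partial assignment of bag vertices to eggs together with accumulated pairwise connectivity information, but the MSO$_2$ route is substantially cleaner.
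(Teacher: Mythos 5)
Your proposal is essentially the paper's proof: both reduce $\dsn(G)\geq k$ to the existence of $k$ disjoint connected vertex sets pairwise joined by $k$ edge-disjoint paths (invoking Menger's theorem for the egg-cut condition), encode this as an MSO$_2$ sentence of size depending only on $k$, and apply Courcelle's theorem. The only cosmetic difference is that the paper quantifies over a partition of $V(G)$ into $k$ connected parts rather than $k$ disjoint non-empty connected sets, and your explicit justification for restricting to exactly $k$ eggs is a point the paper leaves implicit.
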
 

From here we consider our computational problems from the perspective of approximation.  Although gonality is known to be APX-Hard for general graphs \cite{gijswijt2019computing}, it may admit approximation algorithms for specific graph families. In particular, a deterministic $\rho$-approximation algorithm $A$ for minimization objective function $f$ guarantees
\begin{align*}
    OPT \leq A(x) \leq \rho \cdot OPT
\end{align*}
where $OPT$ denotes the optimal cost and $\rho > 1$. Similarly, a deterministic $\rho$-approximation algorithm $A$ for maximization objective function $f$ guarantees
\begin{align*}
    OPT / \rho \leq A(x) \leq OPT
\end{align*}
where $OPT$ denotes the optimal cost and $\rho > 1$. We characterize various graph families whose scramble number and gonality can be constant-factor approximated in polynomial time. In doing so, we prove the following two theorems; here $\alpha_k(G)$ denotes the \emph{$k$-component independence number} as defined in Section \ref{preliminaries}.

\begin{theorem} \label{k-hitting-set}
    Let $G$ be a simple graph on $n$ vertices. The quantity $n - \alpha_{k-1}(G)$ can be $k$-approximated in polynomial time. 
\end{theorem}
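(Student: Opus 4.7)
The plan is to cast $n - \alpha_{k-1}(G)$ as a minimum hitting set problem and apply the standard $k$-approximation via disjoint packing. By definition of the $(k-1)$-component independence number, $n - \alpha_{k-1}(G)$ equals the minimum size of a set $C \subseteq V(G)$ such that every connected component of $G - C$ has at most $k-1$ vertices. I would first observe that this is equivalent to asking $C$ to intersect the vertex set of every connected subgraph of $G$ on exactly $k$ vertices: any surviving component of size $\geq k$ contains a connected $k$-vertex subgraph (grow a connected set one neighbor at a time from any starting vertex), while any such $k$-vertex subgraph survives intact whenever $C$ avoids its vertex set. Let $\mathrm{OPT} = n - \alpha_{k-1}(G)$.

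The algorithm I would use is a greedy disjoint-packing routine. While the current graph has a component of size at least $k$, pick any vertex in such a component, grow a connected subgraph on exactly $k$ vertices around it by repeatedly adjoining an unchosen neighbor of the current set, and then delete those $k$ vertices from the graph. Each iteration runs in polynomial time and removes $k$ vertices, so the whole procedure terminates in polynomial time. Let $H_1, \ldots, H_m$ be the extracted subgraphs and output the set $C = V(H_1) \cup \cdots \cup V(H_m)$, which has size $km$.

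The approximation ratio follows from two inequalities. For feasibility, the stopping condition guarantees that every component of $G - C$ has size less than $k$, so $|C| \geq \mathrm{OPT}$. Conversely, any feasible hitting set must contain at least one vertex of each $V(H_i)$; since the $H_i$ are pairwise vertex-disjoint, these contributions are distinct, and thus $\mathrm{OPT} \geq m$. Combining gives $\mathrm{OPT} \leq |C| = km \leq k \cdot \mathrm{OPT}$, which is the desired $k$-approximation. The only delicate point is the equivalence between the component-size formulation and the hitting-set formulation of $\mathrm{OPT}$, and the polynomial-time extraction of a connected $k$-vertex subgraph from a large component; both reduce to straightforward BFS-style arguments, so no substantial obstacle is anticipated.
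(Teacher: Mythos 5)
Your proof is correct, but it takes a genuinely different route from the paper. The paper reduces the problem to the $k$-hitting-set problem on the $k$-uniform scramble $\varepsilon_k$ (whose eggs are all connected $k$-vertex subgraphs), cites the identity $h(\varepsilon_k) = n - \alpha_{k-1}(G)$ from the uniform-scrambles literature, and then invokes the Bar-Yehuda--Even $k$-approximation for hitting sets of bounded-size sets as a black box. You instead prove the key equivalence directly (a set $C$ leaves only components of size at most $k-1$ if and only if it meets every connected $k$-vertex subgraph) and replace the black-box hitting-set algorithm with a self-contained greedy disjoint-packing argument, the natural generalization of Gavril's maximal-matching algorithm that the paper itself alludes to after the theorem statement. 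The feasibility bound $|C| \geq \mathrm{OPT}$ and the packing bound $\mathrm{OPT} \geq m$ are both correct, giving $\mathrm{OPT} \leq km \leq k\cdot\mathrm{OPT}$. Your approach buys two things: it is fully elementary, and it never enumerates the eggs of $\varepsilon_k$ --- the paper's reduction implicitly requires listing the $O(n^k)$ connected $k$-vertex subgraphs, so it is polynomial only for fixed $k$, whereas your greedy extraction is a BFS per iteration and runs in time polynomial in both $n$ and $k$. What the paper's approach buys in exchange is brevity and the ability to inherit refinements (e.g., weighted versions) of the general $k$-hitting-set approximation.
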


 This theorem can be seen as a generalization of Gavril's algorithm that $2$-approximates the minimum vertex cover of a simple graph. To the best of our knowledge, the result was previously unknown. 

\begin{theorem} \label{gon-sn-approximation}
    Let $G$ be a simple graph on $n$ vertices. There exists a polynomial time algorithm that approximates both $\gon(G)$ and $\sn(G)$ within a constant factor for each of the following graph families. 

    \begin{enumerate}[label = (\arabic*)]
        \item $\mathrm{girth}(G) \geq 4$, $\delta(G) \geq 3$, $\xi_3(G) \geq n+1$ ($3$-approximation)
        \item $\mathrm{girth}(G) \geq 4$, $|V(G)| \geq 6$, $\delta(G) \geq \frac{n}{3}+1$ ($3$-approximation)
        \item $\mathrm{girth}(G) \geq 5$, $|V(G)| \geq 8$, $\delta(G) \geq \frac{1}{2}(\lfloor \frac{n}{2} \rfloor + 4)$ ($4$-approximation)
        \item For any two adjacent vertices $u$ and $v$, $\val(u) + \val(v) \geq n$ and for any two non-adjacent vertices $u$ and $v$, $\val(u) + \val(v) \geq n+1$ ($2$-approximation)
    \end{enumerate}
\end{theorem}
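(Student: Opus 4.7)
My plan is to reduce each family to an application of Theorem \ref{k-hitting-set} by choosing the right $k$: namely $k=3$ for families (1) and (2), $k=4$ for family (3), and $k=2$ for family (4), matching the stated approximation factors. For each family I would show that both $\sn(G)$ and $\gon(G)$ are tightly sandwiched between $n - \alpha_{k-1}(G)$ and a constant multiple of this quantity (and in the cleanest cases coincide with it). Then Theorem \ref{k-hitting-set} returns a value $A(G)$ with $n - \alpha_{k-1}(G) \leq A(G) \leq k \cdot (n - \alpha_{k-1}(G))$ in polynomial time, and $A(G)$ (or $A(G)/k$, depending on the direction) serves simultaneously as a $k$-approximation of $\gon(G)$ and of $\sn(G)$ via the ordering $\sn(G) \leq \gon(G)$.

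First I would establish the upper bound $\gon(G) \leq n - \alpha_{k-1}(G)$ by an explicit divisor: given a maximum $(k-1)$-component independent set $S$, place one chip on each vertex of $V(G) \setminus S$, yielding a divisor $D$ of degree $n - \alpha_{k-1}(G)$. Positive rank of $D$ is verified via Dhar's burning algorithm initiated at each vertex: the components of $G[S]$ are by construction small enough (at most $k-1$ vertices), and the girth/minimum-degree hypotheses of each family force enough edges from every such component into $V(G) \setminus S$ that the fire cannot consume a whole component before stalling. Case (4) uses the Ore-type condition in place of girth and minimum degree, and here $S$ is simply a maximum independent set, so the bound becomes $\gon(G) \leq n - \alpha(G)$.

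The harder direction is the matching lower bound $\sn(G) \geq n - \alpha_{k-1}(G)$. The natural scramble construction takes either single-vertex eggs on $V(G) \setminus S$ or eggs consisting of the small components of $G[S]$ together with their neighbors in $V(G) \setminus S$, and the task is to show that both the hitting number and the egg-connectivity are at least $n - \alpha_{k-1}(G)$. For hitting number, the size of $V(G) \setminus S$ directly gives the lower bound if the eggs are pairwise vertex-disjoint, and otherwise a double-counting argument with the component sizes yields the same quantity. For egg-connectivity, I would use the minimum-degree and girth (or essential connectivity $\xi_3$ in case (1)) hypotheses to rule out small vertex cuts separating the eggs; the specific numerical thresholds on $\delta(G)$ in (2)--(4) are calibrated precisely to this argument.

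The main obstacle is the scramble lower bound: the literature typically supplies analogous statements for gonality via Dhar's algorithm, but $\sn(G)$ requires controlling the egg-connectivity, which is genuinely more delicate under weaker girth assumptions. Case (3), with girth at least $5$ and $k=4$, is the most intricate, since the exclusion of $4$-cycles restricts overlap patterns among eggs and forces the stronger degree bound $\delta(G) \geq \tfrac{1}{2}(\lfloor n/2 \rfloor + 4)$ in order to recover enough internally disjoint paths between eggs. Once all four cases are handled, combining the sandwich with Theorem \ref{k-hitting-set} and $\sn(G) \leq \gon(G)$ yields the claimed polynomial-time constant-factor approximations.
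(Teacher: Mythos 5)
Your reduction is exactly the one the paper uses: pick $k=3,3,4,2$ for families (1)--(4), observe that on each family both invariants equal $n-\alpha_{k-1}(G)$, and feed that quantity to Theorem \ref{k-hitting-set}. The difference is in how the identity $\sn(G)=\gon(G)=n-\alpha_{k-1}(G)$ is obtained. The paper does not prove it at all: it cites \cite[Theorem 3.2, Corollaries 3.1 and 3.2, Theorem 3.3]{uniform_scrambles}, and indeed the four hypothesis lists in the theorem statement are copied verbatim from those results, which is why the proof there is a one-line citation. You instead propose to re-derive these equalities from scratch --- the divisor $V(G)\setminus S$ plus Dhar's algorithm for the upper bound on gonality, and a $k$-uniform-scramble-style construction with an egg-cut analysis for the lower bound on scramble number. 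That plan is the right shape (it is essentially how \cite{uniform_scrambles} proves these facts, via the $k$-uniform scramble $\varepsilon_k$ with $h(\varepsilon_k)=n-\alpha_{k-1}(G)$ and an egg-cut bound coming from the girth/degree/$\xi_3$ hypotheses), but as written it is a roadmap rather than a proof: the egg-cut lower bounds, which you yourself identify as the main obstacle and which are where all the numerical thresholds in (1)--(4) actually get used, are only gestured at. So your attempt is not wrong, but the entire mathematical content that the paper outsources to a citation is left unexecuted in your write-up; if you are not allowed to cite the prior results, you still owe those arguments, and if you are, the whole first two-thirds of your proposal collapses to one sentence. One small further remark: since the equalities are exact on these families, you do not need the ``sandwich up to a constant multiple'' framing --- the output of the $k$-hitting-set approximation already satisfies $\gon(G)\leq A(G)\leq k\cdot\gon(G)$ directly, with no loss from passing between $\sn$ and $\gon$.
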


Our last result provides an upper bound on scramble number in terms of maximum degree and treewidth.  
\begin{theorem}\label{theorem:sn_at_most_tw_times_delta}
For any graph $G$ of maximum degree $\Delta(G)$, we have that $\sn(G)\leq (\tw(G)+1)\Delta(G)$.
\end{theorem}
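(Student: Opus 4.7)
The approach is to combine the chain $\sn(G) \le \scw(G) \le \vcon(G)$---established earlier in the paper, where $\vcon(G)$ denotes vertex congestion---with a direct upper bound on $\vcon(G)$ in terms of treewidth and maximum degree. This reduces the theorem to the pure tree-embedding statement that $\vcon(G) \le (\tw(G)+1)\Delta(G) - 1$.

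To produce a low-congestion tree embedding of $G$, I would fix an optimal tree decomposition $(T,\{B_t\}_{t \in V(T)})$ of width $w = \tw(G)$. For each $v \in V(G)$, I assign $v$ to a chosen host node $t_v \in V(T)$ with $v \in B_{t_v}$, and route each edge $uv \in E(G)$ along the unique $t_u$-to-$t_v$ path in $T$. Write $c(t)$ for the number of edge routes passing through the node $t$ (including those terminating at $t$).

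By the standard connectivity property of tree decompositions, the set $T_v = \{t : v \in B_t\}$ is a subtree of $T$ for every $v$, so any edge route stays inside $T_u \cup T_v$. Consequently, every edge route passing through $t$ has at least one endpoint in $B_t$. Since $|B_t| \le w+1$ and each vertex has at most $\Delta(G)$ incident edges, double counting over endpoints in $B_t$ yields the crude bound $c(t) \le (w+1)\Delta(G)$.

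The main obstacle is tightening this to $(w+1)\Delta(G) - 1$. I would address this by choosing the assignment $v \mapsto t_v$ strategically---for example, after rooting $T$, placing each $v$ at the node of $T_v$ closest to the root---so that at every node $t$ at least one incident edge of $B_t$ is anchored away from $t$ and thus does not contribute to $c(t)$. Making this saving rigorous, while handling the boundary case of a leaf of $T$ separately, is the technical crux; once it is in hand, the bound on $\vcon(G)$ follows and the theorem is immediate from the cited chain of inequalities.
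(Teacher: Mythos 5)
Your reduction is the same as the paper's: Section \ref{congestion-section} establishes $\sn(G)\le\scw(G)\le\vcon(G)$ and then finishes by citing $\vcon(G)=\tw(L(G))+1$ together with the known bound $\tw(L(G))\le(\tw(G)+1)\Delta(G)-1$ of Harvey and Wood. Where you diverge is in attempting to prove the congestion bound from scratch, and that half of your argument has a genuine gap. The object you construct is not an embedding in the sense used to define $\vcon(G)$: the definition requires an \emph{injection} of $V(G)$ into the \emph{leaves} of a \emph{sub-cubic} tree, whereas you place several vertices at a single (possibly internal, possibly high-degree) node of the decomposition tree. The conversion---blowing each bag node up into a sub-cubic subtree whose leaves host that bag's assigned vertices, then recounting the routes through the new internal nodes---is precisely where the constant is decided, and it is omitted. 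Nor can you shortcut by feeding your decorated tree decomposition into the proof of Theorem \ref{theorem:scw_vcon} as a tree-cut decomposition: the node width there is $|X_b|+|\adh(b)|$, which your count $c(t)$ does not control, and that proof relies on every vertex sitting alone at a leaf.

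The second problem is the $-1$, which you flag as the ``technical crux'' and leave unproved. This is not a routine tightening: the chain the paper uses gives $\vcon(G)\le\tw(L(G))+1\le(\tw(G)+1)\Delta(G)$, so your target $\vcon(G)\le(\tw(G)+1)\Delta(G)-1$ is strictly stronger than the cited literature bound. Your proposed saving---choose $t_v$ topmost in $T_v$ so that at each node some incident edge of $B_t$ is ``anchored away''---does not work as stated: an edge contributes to the congestion at $t$ whenever its route passes through $t$, so moving its terminus elsewhere in $T_u\cup T_v$ need not remove it from the count, and at the root (or at any node where every $v\in B_t$ satisfies $t_v=t$) there is nothing above to anchor to. As written, your argument yields at best $\sn(G)\le(\tw(G)+1)\Delta(G)$, and only after the embedding issue above is repaired.
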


It follows that planar graphs of bounded degree have scramble number at most $O(\sqrt{n})$.  Another consequence of this theorem is a novel proof of a result relating the treewidth of a graph $G$ to the treewidth of its corresponding line graph $L(G)$, namely that
\[\tw(L(G)) \geq \tw(G) - 1.\]  
To the best of our knowledge, this is the strongest known such bound, first proven as \cite[Proposition 2.3]{treewidth-of-line-graphs}. 



Our paper is organized as follows. In Section \ref{preliminaries}, we outline preliminary background relevant to our paper as well as some basic properties of carton number following from its definition. In Section \ref{carton-props-section}, we outline some useful properties of carton number that will aid in demonstrating lower bounds, as well as properties regarding its behavior under various graph operations.  In Section \ref{bounds-on-carton-number} we 
show a series of lower bounds on carton number culminating in Theorems \ref{general-lower-bound} and \ref{expander-graphs}. In Section \ref{properties-disjoint-scramble-number} we establish its relationship to other graph invariants and use this to pinpoint the carton number of various graph families. In Section \ref{complexity-sn-variants}, we show Theorem \ref{dsn-fpt} and prove approximability Theorems \ref{k-hitting-set} and \ref{gon-sn-approximation}. Finally, in Section \ref{congestion-section} we prove Theorem \ref{theorem:sn_at_most_tw_times_delta} and show an upper bound on the scramble number of planar graphs. 
\\

\noindent \textbf{Acknowledgements.} The authors were supported by Williams College and by the NSF via grants DMS-2241623 and DMS-1947438.  They thank Marchelle Beougher, Nila Cibu, Cassie Ding, and Chan Lee for many helpful conversations on scrambles and gonality.  

\section{Preliminaries} \label{preliminaries}

In this section, we cover necessary background on graph terminology and notation. We discuss scramble number and screewidth, and define a new invariant known as carton number, which is the main topic of this paper.  

\subsection{Graphs} 

Throughout this paper, a graph $G = (V, E)$ is a finite, connected, loopless multigraph with vertex set $V=V(G)$ and undirected edge multiset $E=E(G)$. This means that in edge multiset $E$, we allow multiple edges between any two distinct vertices of $V$ but disallow edges from a vertex to itself. If there is at most one edge between any two vertices, graph $G$ is known to be \textit{simple}. The \textit{girth} of a graph is the length of its smallest cycle. 

The \textit{degree} of a vertex $v$, denoted $d(v)$, counts the number of edges incident to $v$. For a subset $V' \subseteq V$, subgraph $G[V']$ \textit{induced} by $V'$ has vertex set $V'$ and edge set $E' = \{(u,v) \in E | u, v \in V' \} \subseteq E$. The \textit{vertex connectivity} of a graph $G$, denoted $\kappa(G)$, is the minimum cardinality of a set $T \subseteq V$ for which $G[V \backslash T]$ is disconnected. Similarly, the \textit{edge connectivity} of a graph $G$, denoted, $\lambda(G)$, is the minimum cardinality of a set $W \subseteq E$ for which the graph $G' = (V, E \backslash W)$ is disconnected. In general, the \textit{k-restricted edge connectivity} of a graph $G$, denoted $\lambda_k(G)$, is the minimum cardinality of a set $W \subseteq E$ for which the graph $G' = (V, E \backslash W)$ is disconnected and every component of $G'$ has size $\geq k$ (if such a set $W$ exists; otherwise $\lambda_k(G)$ is undefined). We let $\xi_k(G)$ denote the minimum outdegree of any $k$-vertex connected subgraph of $G$. If $\lambda_k = \xi_k(G)$, we say that $G$ is $\lambda_k$\textit{-optimal}.  

The \textit{independence number} of a graph $G$, denoted $\alpha(G)$, is the maximum cardinality subset $S \subseteq V$ for which no two vertices in $S$ share an edge. In general, the \textit{k-component independence number} of a graph $G$, denoted $\alpha_k(G)$, is the maximum cardinality of subset $S \subseteq V$ for which every component of $G[S]$ has size $\leq k$. 

We now recall several operations and transformations on graphs. The \textit{Cartesian product} of graphs $G = (V_G, E_G)$ and $H = (V_H, E_H)$, denoted by $G \square H$, has vertex set $V_G \times V_H$ with edges between vertices $(u, u')$ and $(v,v')$ if and only if $u = v$ and $(u', v') \in E_H$ or $u' = v'$ and $(u, v) \in E_G$; if $G$ and $H$ are not both simple, edges in the multiset $E_{G\square H}$ appear as many times as the corresponding edge in $E_G$ or $E_H$. 

A degree 2 vertex $v$ with distinct neighbors $s$ and $w$ in $G$ can be \textit{smoothed} by deleting $v$, deleting the edges $(u,v)$ and $(v,w)$, and adding edge $(u,w)$. A graph $H$ obtained via a series of smoothings of $G$ is known as a \textit{refinement} of $G$. An \textit{edge subdivision} is the opposite of smoothing: given vertices $u,w$ and the edge $(u,w)$, this operation deletes $(u,w)$, creates a new vertex $v$, and creates the edges $(u,v)$ and $(v,w)$. A graph $H$ obtained from a series of edge subdivisions of $G$ is a \textit{subdivision} of $G$. 

Distinct vertices $u, v, w$ with edges $(u,v)$ and $(v,w)$ can be \textit{lifted} by deleting edges $(u,v)$ and $(v,w)$ while creating edge $(u,w)$. A graph $H$ obtained via a series of \textit{lifts} of $G$ is known as an \textit{immersion minor} of $G$. A function $f$ is \textit{immersion minor monotone} if for every $H$ that is an immersion minor of $G$, we have $f(H)\leq f(G)$. 

An edge contraction is an operation in which two vertices connected by an edge are merged into one vertex; formally,given a graph $G$ with $u,v\in V$ and $(u,v)\in E$, an edge contraction is a map that is the identity on $V\setminus\{u,v\}$ and maps $u$ and $v$ to a new vertex $w$; for all $(x,z)\in E$ with $x,z\notin\{u,v\}$, it is again the identity, while the edges $(x,u)$ or $(x,v)$ are mapped to $(x,w)$. A graph $H$ obtained via a series of edge deletions, vertex deletions, and edge contractions of $G$ is known as a \textit{minor} of $G$. A function $f$ is said to be \textit{minor monotone} if $f(H) \leq f(G)$ when $H$ is a minor of $G$. A graph family $\mathcal{G}$ is said to be \textit{minor closed} if for every $G \in \mathcal{G}$, every minor of $G$ is also in $\mathcal{G}$. 

\subsection{Treewidth and Scramble Number}

The treewidth of a graph $G$, denoted $\tw(G)$, is a well-studied invariant that serves as a lower bound on gonality \cite{debruyn2014treewidth}. We omit the usual definition of treewidth in terms of tree-decompositions, and instead present an equivalent characterization with an invariant known as \textit{bramble number}.

A \emph{bramble} $\mathcal{B}$ on a graph $G$ is a collection of nonempty connected subgraphs of $G$ for which any two have non-empty intersection or are joined by an edge.  The \textit{order of bramble } $\mathcal{B}$ is its \textit{hitting number} $h(\mathcal{B})$, the smallest cardinality of a vertex subset $H\subset V(G)$ such that $V(B) \cap H \neq \emptyset$ for all $B \in \mathcal{B}$. The \textit{bramble number} of a graph $G$, denoted $\bn(G)$ is then the maximum order of a bramble on $G$. In \cite{st-graph-searching-treewidth}, Seymour and Thomas proved that $\tw(G) = \bn(G) - 1$.

To find a more powerful lower bound on graph gonality, \cite{new_lower_bound} generalized brambles to scrambles.  A \emph{scramble} $\mathcal S$ on a graph $G$ is a nonempty collection of nonempty connected subgraphs of $G$, which we call the \emph{eggs }of the scramble.

The order of a scramble is defined in terms of its hitting number $h(\mathcal{S})$ and another quantity known as its egg-cut number $e(\mathcal{S})$. An \emph{egg-cut} of $\mathcal{S}$ is  collection of edges in $E(G)$ that, when deleted, disconnect $G$ into two components, each containing an egg from $\mathcal{S}$.  The \textit{egg-cut number} of $\scramble$, denoted $e(\scramble)$, is the minimum size of an egg-cut of $\mathcal{S}$; if no egg-cut exists (that is, if each pair of eggs of $\mathcal{S}$ overlap in at least one vertex), we set $e(\mathcal{S})=\infty$. The \textit{order of a scramble} $\scramble$, denoted $||\mathcal{S}||$ is then defined as the minimum of $h(\scramble)$ and $e(\scramble)$. The \textit{size} of a scramble, denoted $|\mathcal{S}|$, is the number of eggs in scramble $\mathcal{S}$. 

Finally, the \textit{scramble number} of a graph $G$, written $\sn(G)$, is the largest possible order of a scramble on $G$:
\[\sn(G)=\max_{\stackrel{\mathcal{S}\textrm{ a scramble}}{\textrm{ on $G$}}}||\mathcal{S}||.\]
The following result shows that scramble number serves as a better lower bound on gonality than treewidth.

\begin{theorem} \cite[Theorem 1.1]{new_lower_bound} For any graph $G$,
\[\tw(G)\leq \sn(G)\leq \gon(G).\]
    
\end{theorem}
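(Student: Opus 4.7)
The plan is to prove the two bounds separately. For $\tw(G) \leq \sn(G)$, I would invoke the Seymour-Thomas duality $\tw(G) = \bn(G) - 1$ cited in the preliminaries, so that it suffices to exhibit a scramble of order at least $\bn(G) - 1$. A bramble $\mathcal{B}$ of order $\bn(G)$ is already a scramble in the sense of the paper, so the hitting number transfers verbatim, and the remaining task is to bound the egg-cut number from below by $\bn(G) - 1$. I would attempt this by contradiction: given an egg-cut $C$ of size $k$ separating $V(G) = V_1 \sqcup V_2$ with distinguished eggs $B_1 \subseteq V_1$ and $B_2 \subseteq V_2$, I would construct a vertex hitting set for $\mathcal{B}$ from the endpoints of $C$ together with a bounded number of additional vertices, exploiting the bramble axiom that any two eggs overlap or are adjacent: each egg either straddles $C$ and so contains endpoints of a cut edge on both sides, or lies entirely on one side and must be joined by a cut edge to the opposite side's distinguished egg, contributing a cut endpoint on its own side. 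The arithmetic would then force $k \geq \bn(G) - 1$.

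For $\sn(G) \leq \gon(G)$, I would argue via chip-firing. Let $D$ be an effective divisor of degree $\gon(G)$ with positive rank, and let $\mathcal{S}$ be an arbitrary scramble; the goal is $||\mathcal{S}|| \leq \deg(D)$. The argument splits into two cases. If some effective divisor $D' \sim D$ has support meeting every egg of $\mathcal{S}$, then $h(\mathcal{S}) \leq |\mathrm{supp}(D')| \leq \deg(D)$. Otherwise I would fix an egg $B$ missed by the support of every convenient representative, pick $v \in B$, and pass to the $v$-reduced divisor $D'$; positive rank forces $D'(v) \geq 1$, so $D'$ at least hits $B$. I would then apply Dhar's burning algorithm starting from outside the chip support: the boundary between burnt and unburnt vertices produces an egg-cut separating $B$ from some egg on the burnt side, and the reduced-divisor inequality $\outdeg_A(u) > D'(u)$ at each boundary vertex bounds the number of boundary edges by $\deg(D)$, giving $e(\mathcal{S}) \leq \deg(D)$.

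The hard part will be the egg-cut lower bound in the first inequality. The bramble axioms directly control adjacency between pairs of eggs but say nothing about edge-cuts separating them; a naive hitting-set construction from all cut endpoints together with one witness vertex on each side only yields $k \geq (\bn(G) - 2)/2$, which is too weak by a factor of two. A more careful argument is needed, perhaps by choosing the side of the cut containing fewer bramble eggs, or by replacing $\mathcal{B}$ with a modified scramble in which eggs are thickened to pairwise overlap (forcing $e = \infty$) at the cost of a small controlled loss in hitting number. The right inequality is, by contrast, a comparatively routine reduced-divisor and Dhar's-burning calculation that should go through by standard adaptation of the analogous proof of $\tw(G) \leq \gon(G)$.
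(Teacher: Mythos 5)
The paper does not actually prove this statement: it is imported wholesale as \cite[Theorem 1.1]{new_lower_bound}, so there is no in-paper argument to compare against and your proposal must stand on its own. On the left inequality it does not, and you have correctly diagnosed where it fails: after invoking Seymour--Thomas, everything reduces to showing that a bramble $\mathcal{B}$ of order $\bn(G)$ has egg-cut number at least $\bn(G)-1$, which is exactly the content of \cite[Lemma 3.6]{new_lower_bound} (quoted in this paper as Proposition \ref{scramble-order-of-brambles}). Your endpoint-collection argument loses a factor of two, as you admit; the sharper selection rule one would naturally try (for each cut edge keep its $V_1$-endpoint if its $V_2$-endpoint lies in $B_2$, its $V_2$-endpoint if its $V_1$-endpoint lies in $B_1$, and an arbitrary endpoint otherwise) still breaks on cut edges running directly from $B_1$ to $B_2$, of which there may be many. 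Neither proposed repair closes this: choosing the side of the cut with fewer eggs does not change the count, and thickening eggs until they pairwise overlap forces $e=\infty$ but gives no control on how far the hitting number drops (pairwise intersection is compatible with $h=1$), so the ``small controlled loss'' is unjustified. As written, the key lemma is asserted rather than proved; in the context of this paper the honest route is simply to cite Proposition \ref{scramble-order-of-brambles} together with $\tw(G)=\bn(G)-1$, at which point the left inequality is a one-line deduction.

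The right inequality follows the correct strategy (reduced divisors plus Dhar's burning algorithm), but two details are wrong as stated. First, the case split has the quantifiers reversed: the negation of ``some effective representative meets every egg'' is ``every representative misses some egg,'' not ``some egg is missed by every representative''; the argument only needs one representative missing one egg, so this is repairable but should be fixed. Second, and more substantively, the inequality you invoke to bound the cut points the wrong way: the bound $|\partial A|\leq \deg D$ comes from the \emph{unburnt} set $A$ satisfying $\outdeg_A(u)\leq D'(u)$ for every $u\in A$ (this is precisely why those vertices fail to burn), summed over $u\in A$; the condition $\outdeg_A(u)>D'(u)$ is the criterion for a vertex to catch fire and yields no upper bound on the number of boundary edges. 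You also still owe the verification that the resulting edge set is an egg-cut in the paper's sense, i.e., that one egg lies inside $A$ and another egg lies in a different component of the complement after deleting $\partial A$. These are standard but not free.
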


 A \textit{disjoint scramble} $\scramble$ on a graph $G$ is a scramble on $G$ such that the eggs of $\scramble$ are pairwise disjoint.  Note that the hitting number of a disjoint scramble is simply its size.
The \textit{disjoint scramble number} of a graph $G$, written $\dsn(G)$, is the largest order of a disjoint scramble on $G$. Note that $\dsn(G)\leq \sn(G)$; sometimes this inequality is strict \cite[Proposition 5.2]{sn_two}.

\begin{figure}[hbt]  
    \centering
    \includegraphics[scale = 0.45]{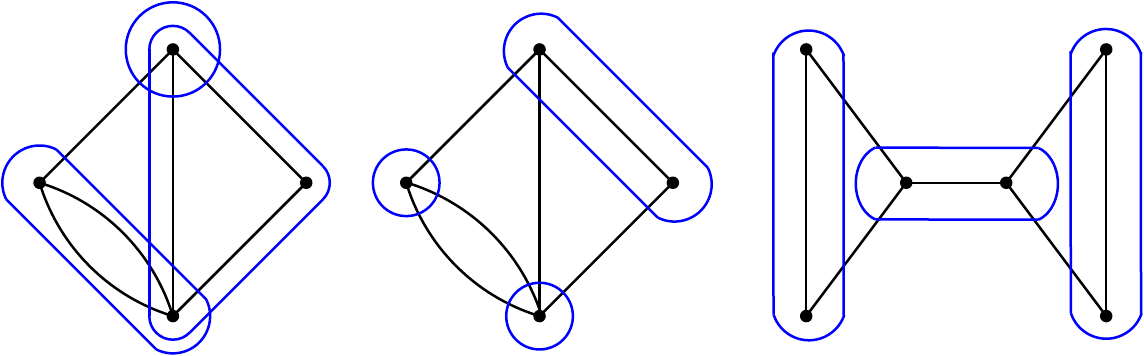}
    \caption{Scrambles of orders $2$, $3$, and $1$, respectively.  The second and third scrambles are disjoint, while the first is not.}
    \label{figure:three_scrambles}
\end{figure}

Figure \ref{figure:three_scrambles} illustrates several scrambles, the first two on the same graph $G_1$.  The first scramble $\mathcal{S}_1$ on $G_1$ has $h(\mathcal{S}_1)=2$ and $e(\mathcal{S}_1)=3$, so that $||\mathcal{S}_1||=2$.  The second scramble $\mathcal{S}_2$ on $G_1$ has $h(\mathcal{S}_2)=3$ and $e(\mathcal{S}_2)=3$, so that $||\mathcal{S}_2||=3$. This turns out to be the maximum order of a scramble on $G$: to have a larger hitting number, each vertex must be its own egg, but then there would exist an egg-cut of size $2$.  Thus we have $\sn(G_1)=3$, and since this can be achieved by the disjoint scramble $\mathcal{S}_2$ we have $\dsn(G_1)=3$.  The third scramble $\mathcal{S}_3$ on the final graph $G_2$ has order $1$, since the middle edge forms an egg-cut; this illustrates that there may be smaller egg-cuts than the minimum number of edges leaving any egg.

Given a scramble $\scramble$ on some graph $G$, a \textit{subset scramble} of $\scramble$ is another scramble $\scramble'$ on $G$ such that $\scramble' \subseteq \scramble$, i.e. each egg in $\scramble'$ is also an egg in $\scramble$.  Given a graph $G$ and a positive integer $k$, the \emph{$k$-uniform scramble}, denoted $\varepsilon_k$, on $G$ is the scramble whose eggs are all the connected subgraphs of $G$ on $k$ vertices \cite{uniform_scrambles}. In particular, the $1$-uniform scramble is the scramble where the eggs are precisely the vertices of $G$; these are sometimes called verteggs. The 2-uniform scramble is the scramble where the eggs are all pairs of vertices that are connected by an edge; it is sometimes known as the edge scramble. 

\subsection{Carton Number}

 While scramble number is NP-Hard to compute \cite[Theorem 1.1]{echavarria2021scramble}, its inclusion in NP is unknown. Scrambles seem to be a natural NP-certificate candidate for lower-bounding scramble number, but the size of maximal order scrambles may be exponentially large in the size of the graph. Bounding the size of maximal order scrambles makes brute force computation of scramble number, where every possible combination of scrambles is considered, more efficient. In particular, a polynomial upper bound on the size of maximum order scrambles would give for a polynomial-size certificate for scramble number (though not necessarily immediately implying checkability in polynomial time). With this in mind, we define the \textit{carton number} of a graph $G$, denoted $\cart(G)$, as the smallest number of eggs in any scramble $\scramble$ with order equal to $\sn(G)$. In other words, it is the minimal size of a maximal order scramble; or formulaically,
\[\cart(G)=\min\{|\mathcal{S}|\,:\, ||\mathcal{S}||=\sn(G)\}.\]A \textit{carton scramble} on a graph $G$ is a scramble with order $\sn(G)$ and $\cart(G)$ many eggs.

\begin{figure}[hbt]
    \centering
    \includegraphics[scale = 0.5]{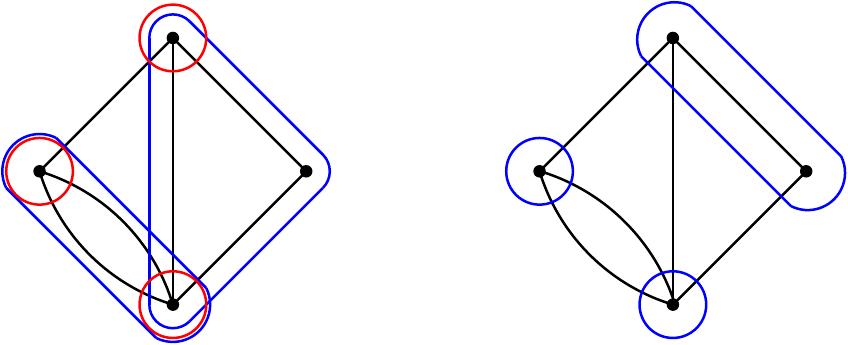}
    \caption{Two scrambles of order three on a graph with carton number three.}
    \label{figure:carton_number}
\end{figure}

For instance, Figure \ref{figure:carton_number} presents two scrambles, both of order $3$, on a graph $G$ of scramble number $3$.  The first scramble has size $5$, while the second has size $3$.  No scramble of maximal order on the graph has size smaller than $3$, since $h(\mathcal{S})\leq |\mathcal{S}|$.  Thus $\textrm{cart}(G)=3$, and the second scramble is a carton scramble while the first is not.

\subsection{Screewidth}
While lower-bounding scramble number is possible with a specific scramble, upper-bounding is more subtle, especially if there is a gap between scramble number and gonality. 
One useful invariant for providing upper bounds is \emph{screewidth}, recently developed in \cite{screewidth-og}. There are no known cases where screewidth is larger than gonality, meaning it is in practice a more useful upper bound.

For a graph $G$, a \textit{tree-cut decomposition of $G$} is a pair $\mathcal{T} = (T, \mathcal{X})$ where $T$ is a tree\footnote{A tree is a connected acyclic graph.} and $\mathcal{X}=(X_b)_{b\in V(T)}$ is a family of subsets of $V(G)$, such that each vertex $v\in V(G)$ appears in exactly one $X_b$.
Thus $\mathcal{X}$ forms a near-partition of $V(G)$, which is the same as a partition with empty sets allowed.  For clarity, we refer to the vertices and edges of $T$ as \emph{nodes} and \emph{links}, respectively, reserving ther terms \emph{vertices} and \emph{edges} for $G$.  We refer to the set $X_b$ as the \emph{bag} associated to the node $b\in V(T)$.

The \textit{adhesion} of a link $l \in E(T)$, denoted as $\adh(l)$, is the set of edges $(v_a, v_b) \in E(G)$ where $v_a$ and $v_b$ are in different components of $T - l$. Similarly, the adhesion of a non-leaf\footnote{The adhesion of a leaf node, with degree equal to $1$, is taken to be the empty set.} node $b \in V(T)$ is the set of edges $(v_a, v_b) \in E(G)$ where $v_a$ and $v_b$ are in different components of $T - b$. The \textit{width} of the tree-cut decomposition $\mathcal{T}$ is then defined to be $w(\mathcal{T}) = \max(\lw(\mathcal{T}), \bw(\mathcal{T}))$ where 
\begin{align*}
    \lw(\mathcal{T}) = \max_{l \in E(T)} |\adh(l)|, \quad \quad \bw(\mathcal{T}) = \max_{b \in V(T)} \left[ |X_b| + |\adh(l)| \right].
\end{align*}
Finally, the \emph{screewidth} of $G$ is the minimum width of any tree-cut dcomposition of $G$; that is, $\scw(G) = \min_{\mathcal{T}} w(\mathcal{T})$.  By \cite[Theorem 1.1]{screewidth-og}, we have $\sn(G)\leq \scw(G)$ for any graph $G$.

We can visualize a tree-cut decomposition by drawing a thickened copy of $T$, drawing the vertices of $G$ inside of their corresponding nodes, and drawing the edges $(u,v)\in E(G)$ passing through the unique path in $T$ connecting the vertices $u$ and $v$ in their respective bags.  The adhesion of a link is then the set of edges passing through it; and the adhesion of a non-leaf node is the set of edges ``tunneling'' through the node, without either endpoint in that node.  Thus $\lw(\mathcal{T})$ is the maximum number of edges passing through a link; and $\bw(\mathcal{T})$ is the maximum over all nodes of the number of vertices plus the number of tunneling edges.

Two examples of tree-cut decompositions $\mathcal{T}_1$ and $\mathcal{T}_2$ of the same graph $G$ are illustrated in Figure \ref{figure:screewidth_example}.  The first has $\lw(\mathcal{T}_1)=5$ and $\bw(\mathcal{T}_1)=6$, so $w(\mathcal{T}_1)=6$.  The second has $\lw(\mathcal{T}_2)=3$ and $\bw(\mathcal{T}_2)=2$, so $w(\mathcal{T}_2)=3$.  It follows that $\scw(\mathcal{T})\leq 3$.  As previously established, this graph has scramble number $3$, implying that $\scw(G)=3$ since $\sn(G)\leq \scw(G)$.

\begin{figure}[hbt]
    \centering
    \includegraphics[scale = 0.7]{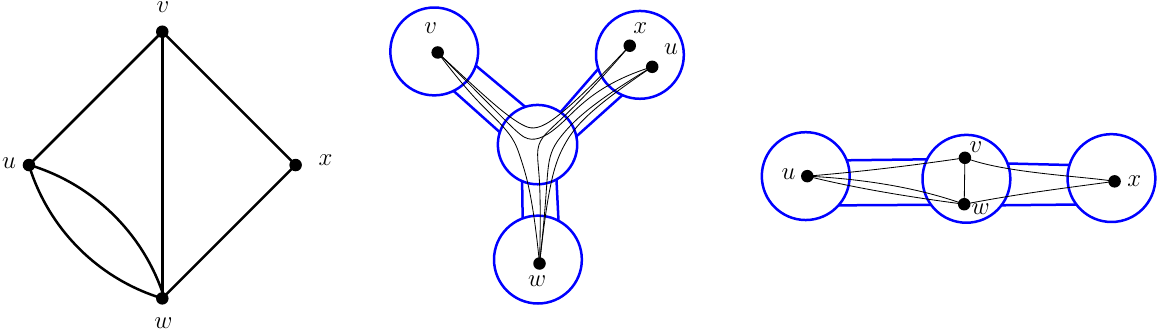}
    \caption{A graph with two tree-cut decompositions of it, one of width $6$ and one of width $3$}
    \label{figure:screewidth_example}
\end{figure}

\subsection{Premilinary results}

Below, we prove or recall results that will be useful throughout the paper. Some follow quickly from definitions and others come from previous work. 

\begin{proposition} \label{dsn-sn-cart-inequality-chain} For any graph $G$, we have
    $\dsn(G) \leq \sn(G) \leq \cart(G)$.
\end{proposition}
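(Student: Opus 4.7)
The plan is to verify each inequality in turn, as both follow quickly from unpacking definitions.

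For $\dsn(G) \leq \sn(G)$: a disjoint scramble is, by definition, a scramble satisfying the additional constraint that its eggs are pairwise disjoint. Thus the set of disjoint scrambles on $G$ is a subset of the set of all scrambles on $G$, so the maximum order over disjoint scrambles is at most the maximum order over all scrambles. This is a one-line argument.

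For $\sn(G) \leq \cart(G)$: let $\mathcal{S}^*$ be a carton scramble, so that $\|\mathcal{S}^*\| = \sn(G)$ and $|\mathcal{S}^*| = \cart(G)$. Since the order of a scramble is the minimum of its hitting number and its egg-cut number, we have
\[
\sn(G) = \|\mathcal{S}^*\| = \min(h(\mathcal{S}^*), e(\mathcal{S}^*)) \leq h(\mathcal{S}^*).
\]
The key observation is then that $h(\mathcal{S}^*) \leq |\mathcal{S}^*|$: picking any one vertex from each egg produces a vertex set that meets every egg, and this set has size at most $|\mathcal{S}^*|$ (at most, rather than equal, because the same vertex may be chosen from multiple eggs if they overlap). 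Combining these gives $\sn(G) \leq |\mathcal{S}^*| = \cart(G)$.

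There is no real obstacle; the only point where one must be mildly careful is noting that the hitting number is defined via the minimum over all hitting sets, and exhibiting the ``one vertex per egg'' hitting set suffices for the upper bound $h(\mathcal{S}^*) \leq |\mathcal{S}^*|$. This same bound $h(\mathcal{S}) \leq |\mathcal{S}|$ was implicitly used in the paper's earlier discussion of Figure \ref{figure:carton_number}, so it is a natural and well-motivated step.
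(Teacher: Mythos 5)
Your proposal is correct and follows essentially the same route as the paper: the first inequality via the observation that disjoint scrambles form a subset of all scrambles, and the second via $\sn(G) = \min(h(\mathcal S), e(\mathcal S)) \leq h(\mathcal S) \leq |\mathcal S|$ applied to a maximum-order (carton) scramble. The only cosmetic difference is that the paper applies the chain to an arbitrary maximal-order scramble and then minimizes, while you apply it directly to a carton scramble; both are equally valid.
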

\begin{proof}
    The first inequality holds since any disjoint scramble is a scramble; $\sn(G)$ is the largest number in the set of possible scramble orders, while $\dsn(G)$ is the largest element of a subset of that set. For the second inequality, fix any maximal order scramble $\mathcal S$. Then $\sn(G) = \min\{h(\scramble), e(\scramble)\} \leq h(\mathcal S)$ and $h(\mathcal S)$ is no greater than the number of eggs in $\mathcal S$.
    \end{proof}

Since $\dsn(G)\leq \sn(G)$, we have that disjoint scramble number also serves as a lower bound on graph gonality, although it will never perform better than scramble number.  We will see in Section \ref{properties-disjoint-scramble-number} that disjoint scramble number may be larger than, smaller than, or equal to treewidth, meaning it also does not consistently perform better than treewidth as a lower bound on gonality.  However, disjoint scramble number has the advantage that the problem of deciding whether $\dsn(G)\geq k$ is in NP, with a disjoint scramble of order $k$ serving as a polynomial size certificate verifiable in polynomial time (for a disjoint scramble $\mathcal{S}$ we have $h(\mathcal{S})=|\mathcal{S}|$, and $e(\mathcal{S})$ can be computed in polynomial time as there are polynomially many eggs \cite{uniform_scrambles}).  Lower bounding treewidth or scramble number is not known to be in NP; indeed, since upper bounding treewidth is NP-complete, lower bounding could only be in NP if NP$=$co-NP, which is generally expected not to be the case.

\begin{proposition} \label{egg-cut-subset-scramble}
    Given any graph $G$ with scramble $\mathcal S$, for any subset scramble $\mathcal S' \subseteq \mathcal S$ we have $e(\mathcal S') \geq e(\mathcal S)$.  
\end{proposition}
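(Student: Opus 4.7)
The plan is to prove the inequality by a direct inclusion argument on the families of egg-cuts of $\mathcal{S}$ and $\mathcal{S}'$. The key observation, which I expect to make the whole argument go through in essentially one step, is that every egg-cut of $\mathcal{S}'$ is automatically an egg-cut of $\mathcal{S}$; taking the minimum over egg-cut sizes on both sides then yields $e(\mathcal{S}) \leq e(\mathcal{S}')$.

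To justify the key observation, I would fix any egg-cut $W \subseteq E(G)$ of $\mathcal{S}'$, so that deleting $W$ breaks $G$ into two components $C_1, C_2$, each containing an egg of $\mathcal{S}'$. Since $\mathcal{S}' \subseteq \mathcal{S}$, each such egg is also an egg of $\mathcal{S}$, and so $C_1$ and $C_2$ each contain an egg of $\mathcal{S}$. Hence $W$ qualifies as an egg-cut of $\mathcal{S}$ as well. Consequently the collection of sizes of egg-cuts of $\mathcal{S}$ contains the collection of sizes of egg-cuts of $\mathcal{S}'$, so the minimum of the former is no more than the minimum of the latter, which is precisely the claimed inequality.

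The only edge case to mention is that $\mathcal{S}'$ may admit no egg-cut at all (for instance if $|\mathcal{S}'|=1$, or if every pair of eggs in $\mathcal{S}'$ shares a vertex), in which case $e(\mathcal{S}') = \infty$ by the convention in the preliminaries and the inequality holds trivially. There is no serious obstacle here: the content of the proposition is really just the observation that enlarging a scramble can only create additional candidate egg-cuts, never eliminate existing ones.
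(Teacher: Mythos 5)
Your proof is correct and follows the same route as the paper's: observe that any egg-cut of $\mathcal{S}'$ is also an egg-cut of $\mathcal{S}$ because eggs of $\mathcal{S}'$ are eggs of $\mathcal{S}$, then compare minima. Your explicit handling of the case $e(\mathcal{S}')=\infty$ is a minor addition the paper leaves implicit.
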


\begin{proof}
    Since $\mathcal{S}'\subset \mathcal{S}$, any egg-cut for $\mathcal{S}'$ is also an egg-cut for $\mathcal{S}$.  Since there exists a set of $e(\scramble')$ edges that forms an egg-cut for $\mathcal{S}'$, the same is true for $\mathcal{S}$, implying that $e(\mathcal{S})\leq e(\mathcal{S}')$.
\end{proof}
This inequality can be strict in some cases, as $\scramble$ might contain pairs of eggs that $\scramble'$ does not contain which can be separated by removing fewer than $e(\scramble')$ edges. An extreme example is where $\scramble'$ contains only one egg, so $e(\scramble') = \infty$, even for $\scramble$ with $e(\scramble)$ finite.

\begin{proposition} \label{scramble-order-of-brambles}
     \cite[Lemma 3.6]{new_lower_bound} Let $\mathcal{B}$ be a bramble of bramble order $k$. Then, $\mathcal{B}$ is a scramble with scramble order either $k$ or $k - 1$.
\end{proposition}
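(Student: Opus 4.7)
The plan is to reduce to showing $e(\mathcal{B}) \geq k-1$. Since $\mathcal{B}$ is a bramble, its hitting number equals its bramble order, i.e., $h(\mathcal{B}) = k$, and so the scramble order $||\mathcal{B}|| = \min(h(\mathcal{B}), e(\mathcal{B})) = \min(k, e(\mathcal{B}))$. Once $e(\mathcal{B}) \geq k-1$ is established, $||\mathcal{B}|| \in \{k-1, k\}$ follows.

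To bound $e(\mathcal{B})$, I would fix a minimum egg-cut $F$ of size $m$ (assuming one exists; otherwise $e(\mathcal{B}) = \infty$ and the claim is immediate), separating $V(G)$ into components $A$ and $B$ each containing at least one egg. Let $V_A^* \subseteq V(A)$ be the set of endpoints of $F$ that lie in $A$, so $|V_A^*| \leq m$. The bramble property implies that every egg not lying entirely in $B$ has a vertex in $V_A^*$: an egg $E \subseteq A$ is disjoint from any egg $E_B \in \mathcal{B}_B$, so the bramble condition forces an edge between them, and this edge necessarily lies in $F$; similarly, any egg spanning both $A$ and $B$ is connected and must use an edge of $F$. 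Hence $V_A^*$ hits every egg except possibly those entirely contained in $B$.

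It remains to hit every egg lying entirely in $B$ using a single additional vertex. I would accomplish this by refining the choice of $F$: among all minimum egg-cuts, choose one additionally minimizing $|B|$. A standard uncrossing argument using submodularity of the edge-cut function should then force the eggs of $\mathcal{B}_B$ to share a common vertex $b$: if two eggs $E_1, E_2 \in \mathcal{B}_B$ were disjoint, the bramble property would provide an edge between them in $G[B]$, and one could peel one of them off into the $A$-side to obtain a valid egg-cut of size at most $m$ but with strictly smaller $B$-side, contradicting minimality. Given such $b$, the set $V_A^* \cup \{b\}$ is a hitting set of $\mathcal{B}$ of size at most $m+1$, yielding $k = h(\mathcal{B}) \leq m+1 = e(\mathcal{B})+1$, as required.

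The main obstacle will be this uncrossing step: rigorously verifying that some minimum egg-cut makes $\mathcal{B}_B$ satisfy the Helly property. Although this follows the template of standard submodular cut-function arguments in connectivity theory, it requires care in the bramble setting to guarantee that the rerouted partition remains a valid egg-cut with eggs on both sides and of size exactly $m$.
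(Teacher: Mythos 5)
Your opening moves are sound: reducing the claim to $e(\mathcal{B})\geq k-1$, and observing that the set $V_A^*$ of $A$-side endpoints of an egg-cut $F$ hits every egg except possibly those contained entirely in $B$, are both correct. The gap is the final step, and it is not merely the technical ``uncrossing'' difficulty you flag---the claim itself is false: no choice of minimum egg-cut guarantees that the eggs inside $B$ share a common vertex. First, your peeling argument at best rules out \emph{disjoint} pairs inside $B$, but pairwise intersection does not yield a common vertex (brambles have no Helly property). Second, here is an example where every labelling of every minimum egg-cut fails. Let $G$ be two triangles $x_1x_2x_3$ and $y_1y_2y_3$ joined by the matching $x_1y_1,x_2y_2,x_3y_3$, and let $\mathcal{B}$ consist of the six two-vertex eggs $\{x_i,x_j\}$ and $\{y_i,y_j\}$. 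Any two of these intersect or are joined by a matching edge, so $\mathcal{B}$ is a bramble with $h(\mathcal{B})=4$. Since the eggs on each triangle pairwise intersect, every egg-cut must separate the two triangles, so the unique minimum egg-cut is the matching itself, with $m=3$; on \emph{each} side the three eggs have empty common intersection, so $V_A^*\cup\{b\}$ misses an egg for every single vertex $b$ and either choice of which side is ``$A$''. The proposition nevertheless holds here: $\{x_1,x_2,y_1,y_3\}$ is a hitting set of size $m+1=4$, and $e(\mathcal{B})=3=k-1$.

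The repair is to select endpoints from \emph{both} sides of the cut. Write $F=\{(a_i,b_i)\}_{i=1}^m$ with $a_i\in A$, $b_i\in B$, choose a minimal $P_0\subseteq V_A^*$ hitting all eggs contained in $A$, and set $H=P_0\cup\{b_i:a_i\notin P_0\}\cup\{b_i:a_i=v_0\}$ for one fixed $v_0\in P_0$. Minimality gives each $v\in P_0$ a private egg $E_v\subseteq A$ with $E_v\cap P_0=\{v\}$; for any egg $E'\subseteq B$ missed by $\{b_i:a_i\notin P_0\}$, the bramble condition applied to the disjoint pair $E_{v_0},E'$ produces a cut edge $(a_i,b_i)$ with $a_i\in E_{v_0}\cap P_0=\{v_0\}$ and $b_i\in E'$, so $E'$ meets $\{b_i:a_i=v_0\}$. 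Eggs meeting both sides contain both endpoints of some cut edge and are hit automatically, and a short count using $d(v)=|\{i:a_i=v\}|\geq 1$ gives $|H|\leq m+1$, hence $k=h(\mathcal{B})\leq e(\mathcal{B})+1$. (Note the paper itself does not prove this statement; it defers to \cite[Lemma 3.6]{new_lower_bound}, whose argument is of this mixed-endpoint type and, notably, needs no minimality or uncrossing of the egg-cut at all.)
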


\begin{proposition} \label{prop:sn_invariant_subdivision}  
\cite[Proposition 4.6]{new_lower_bound} Scramble number is invariant under subdivision and smoothing.
\end{proposition}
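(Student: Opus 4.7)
The statement says scramble number is invariant under subdivision and smoothing. Since smoothing is the inverse of subdivision, it suffices to prove invariance under a single edge subdivision: let $G'$ arise from $G$ by subdividing $e=(u,v)$ with a new vertex $w$, replacing $e$ by a length-two path $u$-$w$-$v$. I aim to prove $\sn(G)=\sn(G')$ by exhibiting order-preserving correspondences between scrambles on $G$ and scrambles on $G'$.

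For the forward direction I would send an egg $B \subseteq G$ to $\Phi(B) \subseteq G'$ by subdividing $e$ inside $B$ whenever $e \in E(B)$ and leaving $B$ alone otherwise; for the reverse I would send an egg $B' \subseteq G'$ to $\Psi(B') \subseteq G$ by ``smoothing $w$ out of $B'$'' (contracting the subdivision path back to $e$ when both subdivision edges lie in $B'$, deleting $w$ and its pendant edge when $w$ is a leaf of $B'$, and replacing $\{w\}$ by $\{u\}$ in the degenerate case $B'=\{w\}$). Both operations preserve connectedness and hence send scrambles to scrambles. I would then verify that each correspondence preserves both the hitting number and the egg-cut number, so that the scramble order is preserved.

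For the hitting number, the key observation is that any hitting set using $w$ can be swapped for a hitting set of equal size using $u$: an egg of $\Phi(\mathcal{S})$ hit by $w$ must have arisen from an egg of $\mathcal{S}$ containing $e$, hence containing $u$; conversely, any hitting set on $V(G)$ automatically hits $\mathcal{S}'$ since $V(\Psi(B')) \subseteq V(B')$. The main obstacle is the egg-cut analysis. In the direction $\sn(G) \leq \sn(G')$, one converts an egg-cut $F' \subseteq E(G')$ to $F \subseteq E(G)$ of size at most $|F'|$ by replacing any of $(u,w),(w,v)$ in $F'$ with the single edge $e$, and a two-case argument verifies $F$ is an egg-cut: if neither subdivision edge is in $F'$, the path $u$-$w$-$v$ is intact and components of $G'-F'$ restrict bijectively to those of $G-F$; if at least one is in $F'$, no egg of $\Phi(\mathcal{S})$ can contain $w$ (else it would straddle the cut), so no egg of $\mathcal{S}$ uses $e$ and cutting $e$ severs no egg. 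The direction $\sn(G') \leq \sn(G)$ is more delicate because when $e \in F$ the choice of which subdivision edge to insert in $F'$ depends on how eggs of $\mathcal{S}'$ distribute around $w$; the cleanest resolution is to first modify $\mathcal{S}'$, without decreasing its order, so that no egg uses $w$ as a pendant (replacing such eggs by their $w$-free restriction, which does not decrease hitting number and only helps the egg-cut number), after which the egg-cut conversion goes through directly.
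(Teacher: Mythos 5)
The paper does not actually prove this proposition: it is quoted from \cite[Proposition 4.6]{new_lower_bound}, and the only traces of that proof recorded here are the two constructions in Proposition \ref{prop:specific_scrambles_subdivision} --- in particular the hypothesis $\|\mathcal{S}'\|\geq 3$ in part (ii), which is a warning sign for exactly the place where your argument has a gap. Your forward direction ($\sn(G)\leq\sn(G')$) is essentially the standard argument and is fine up to minor imprecision (an egg ``straddling the cut'' is not by itself forbidden; what you actually need, and what is true, is that the two \emph{witnessing} eggs contained in the two components cannot contain $w$ when a subdivision edge lies in the cut).

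The gap is in the reverse direction. Your fix --- replace every egg having $w$ as a pendant by its $w$-free restriction, asserting this ``does not decrease hitting number and only helps the egg-cut number'' --- rests on a false claim. Shrinking eggs can only \emph{decrease} the egg-cut number: it can turn an overlapping pair into a disjoint pair, creating egg-cuts that did not exist before. This happens precisely in the hard configuration: $B_1'\ni u,w$ (not $v$) and $B_2'\ni v,w$ (not $u$) with $B_1'\cap B_2'=\{w\}$. After your modification these become disjoint, and a new cut separating them need not have size $\geq\|\mathcal{S}'\|$ a priori; your ``the egg-cut conversion goes through directly'' silently assumes the problem away. Closing this requires a separate argument, e.g.: given a minimal egg-cut $C$ of $\Psi(\mathcal{S}')$ in $G$ with $e\in C$, $u\in A$, $v\in B$, and both witnessing eggs containing $w$, either one of $A$, $B$ contains an \emph{intact} egg of $\mathcal{S}'$ (so one of the two lifts of $C$ to $G'$ is already an egg-cut of $\mathcal{S}'$ of the same size), or else every egg of $\mathcal{S}'$ contains $w$ or an edge of $C\setminus\{e\}$, whence $\{w\}$ together with one endpoint of each edge of $C\setminus\{e\}$ is a hitting set of $\mathcal{S}'$ of size at most $|C|$, giving $|C|\geq h(\mathcal{S}')\geq\|\mathcal{S}'\|$ anyway. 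Relatedly, your hitting-number transfer ``any hitting set on $V(G)$ automatically hits $\mathcal{S}'$'' fails for the degenerate egg $\{w\}$ (since $\Psi(\{w\})=\{u\}$ and $u\notin\{w\}$); you must observe that $\{w\}\in\mathcal{S}'$ forces $\|\mathcal{S}'\|\leq 2$ and dispose of orders at most $2$ separately, which your write-up never does.
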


\begin{proposition}\label{prop:specific_scrambles_subdivision}  Suppose that $G'$ is a subdivision of $G$ obtained by adding a single vertex $v$ in between two adjacent vertices $u$ and $w$. 
\begin{itemize}
    \item[(i)] Let $\mathcal{S}$ be a scramble on $G$, and construct $\mathcal{S}'$ from $\mathcal{S}$ by adding $v$ to every egg of $\mathcal{S}$ that contains $u$.  Then $\mathcal{S}'$ is a scramble on $G'$ with $||\mathcal{S}'||\geq ||\mathcal{S}||$.
    \item[(ii)] Let $\mathcal{S}'$ be a scramble on $G'$ of order at least $3$, and construct $\mathcal{S}$ from $\mathcal{S}'$ by deleting $v$ from every egg that contains it.  Then $\mathcal{S}$ is a scramble on $G$ with $||\mathcal{S}||\geq ||\mathcal{S}'||$.
\end{itemize}
\end{proposition}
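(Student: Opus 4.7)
The plan is to verify in each part that the constructed collection is a scramble, and then separately show that its hitting number and egg-cut number compare with those of the original in the required direction.

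For part (i), I would first observe that each new egg $V(E)\cup\{v\}$ of $\mathcal{S}'$ (for $E\in\mathcal{S}$ containing $u$) is connected in $G'$ via the edge $(u,v)$, and that each egg of $\mathcal{S}'$ equal to an egg of $\mathcal{S}$ remains connected, since no such egg could have required the removed edge $(u,w)$ for connectivity (which would have forced $u\in V(E)$). For the hitting number, given any hitting set $H'$ of $\mathcal{S}'$, I would replace $v$ with $u$ when $v\in H'$; the key observation is that any $\mathcal{S}'$-egg hit by $v$ was obtained by adding $v$ to an egg already containing $u$, so $u$ recovers the hit. For the egg-cut number, given a minimum cut $C'$ for a disjoint pair in $\mathcal{S}'$, I would case on which of $(u,v),(v,w)$ belong to $C'$ and build a corresponding $G$-cut of equal or smaller size: neither in $C'$ gives $C=C'\subseteq E(G)$; exactly one gives $C$ obtained by replacing that edge with $(u,w)$; and both in $C'$ gives a cut of size $|C'|-1$ by replacing them jointly with $(u,w)$. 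In every case $G-C$ is obtained from $G'-C'$ by deleting the (isolated or pendant) vertex $v$, so the corresponding pair $E_1,E_2\in\mathcal{S}$ remains separated. Together with the hitting-number argument, this yields $\|\mathcal{S}'\|\geq\|\mathcal{S}\|$.

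For part (ii), I would first use $\|\mathcal{S}'\|\geq 3$ to exclude $\{v\}$ from being an egg of $\mathcal{S}'$: otherwise either $\{(u,v),(v,w)\}$ is a $2$-element egg-cut, or every other egg contains $v$ so that $\{v\}$ is a size-$1$ hitting set. With $\{v\}$ excluded, each $V(E')\setminus\{v\}$ is nonempty, and its connectivity in $G$ follows because the edge $(u,w)\in E(G)$ provides a replacement for any $u-v-w$ subpath that $V(E')$ used in $G'$. The inequality $h(\mathcal{S})\geq h(\mathcal{S}')$ is immediate since any vertex hitting $V(E)=V(E')\setminus\{v\}$ also hits the superset $V(E')$.

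The main obstacle is proving $e(\mathcal{S})\geq e(\mathcal{S}')$. Given a minimum cut $C$ of $\mathcal{S}$ separating a disjoint pair $E_1,E_2$ into components $A_1\ni u$ and $A_2\ni w$, I would take $C^{\ast}=C$ when $(u,w)\notin C$ (so $v$ joins the component containing both $u$ and $w$ in $G'-C$), and otherwise consider the two candidates $C^{\ast}_1=(C\setminus\{(u,w)\})\cup\{(u,v)\}$ and $C^{\ast}_2=(C\setminus\{(u,w)\})\cup\{(v,w)\}$, each of size $|C|$. The delicate case is when both corresponding $\mathcal{S}'$-eggs $V(E_1)\cup\{v\}$ and $V(E_2)\cup\{v\}$ actually appear in $\mathcal{S}'$, since they share $v$ and are no longer disjoint, so the naive correspondence fails. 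To overcome this, I would argue that if neither $A_1$ nor $A_2$ contains any egg of $\mathcal{S}'$ avoiding $v$, then every egg of $\mathcal{S}'$ contains $v$, making $\{v\}$ a hitting set of size $1$ and contradicting $\|\mathcal{S}'\|\geq 3$. Hence some $\mathcal{S}'$-egg $F'$ without $v$ lies on one side, say $A_i$; choosing $C^{\ast}$ so that $v$ lands on the opposite side, the pair $(F',E'_{3-i})$ is wholly separated and forms a valid egg-cut of $\mathcal{S}'$ of size $|C|$, giving $e(\mathcal{S})\geq e(\mathcal{S}')$ and completing the proof.
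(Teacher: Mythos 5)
The paper gives no self-contained proof of this proposition (it simply defers to the proof of \cite[Proposition 4.6]{new_lower_bound}), so your argument has to stand on its own. Part (i) is correct, and in part (ii) the exclusion of $\{v\}$ as an egg, the connectivity of the pared-down eggs, and the hitting-number inequality are all fine. The gap is in the final case of part (ii), where $(u,w)\in C$ and both $V(E_1)\cup\{v\}$ and $V(E_2)\cup\{v\}$ lie in $\mathcal{S}'$.

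There you assert that if neither $A_1$ nor $A_2$ contains an egg of $\mathcal{S}'$ avoiding $v$, then every egg of $\mathcal{S}'$ contains $v$. This dichotomy is false: an egg $F'$ with $v\notin V(F')$ is connected in $G'-v$, i.e.\ in $G$ with the edge $(u,w)$ removed, but its connectivity may be routed through edges of $C\setminus\{(u,w)\}$, so $F'$ can meet both $A_1$ and $A_2$ and hence lie in neither side while still avoiding $v$. In that situation your construction produces no size-$|C|$ egg-cut of $\mathcal{S}'$ and no contradiction with $||\mathcal{S}'||\geq 3$, and the situation is not vacuous, so this is a genuine gap. It can be repaired by switching invariants rather than insisting on an egg-cut: if every egg of $\mathcal{S}'$ either contains $v$ or meets both $A_1$ and $A_2$, then every egg avoiding $v$ must contain both endpoints of some edge of $C\setminus\{(u,w)\}$ (it cannot cross via $(u,w)$, since that edge is unavailable in $G'-v$), so the set consisting of $v$ together with one endpoint of each edge of $C\setminus\{(u,w)\}$ is a hitting set of $\mathcal{S}'$ of size at most $1+(|C|-1)=|C|$. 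This yields $||\mathcal{S}'||\leq h(\mathcal{S}')\leq |C|=e(\mathcal{S})$, which suffices: the target inequality is $||\mathcal{S}||\geq ||\mathcal{S}'||$, not the stronger claim $e(\mathcal{S})\geq e(\mathcal{S}')$, so bounding $h(\mathcal{S}')$ instead of $e(\mathcal{S}')$ in this one case is legitimate.
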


\begin{proof}
    Both claims are established as part of the proof of  \cite[Proposition 4.6]{new_lower_bound}.
\end{proof}

\begin{proposition} \label{dsn=sn}
 For all graphs with $\sn(G) \leq 3$, we have that $\sn(G) = \dsn(G)$.  Moreover, if $\dsn(G)\leq 2$, then $\sn(G)=\dsn(G)$.
\end{proposition}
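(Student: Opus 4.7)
The plan is to prove the common implication $\sn(G)\geq k\Rightarrow \dsn(G)\geq k$ for $k\in\{1,2,3\}$; the first statement of the proposition then follows case by case from Proposition \ref{dsn-sn-cart-inequality-chain}, and the ``moreover'' clause follows by contraposition of the $k=3$ case. The cases $k=1$ and $k=2$ are straightforward. For $k=1$, any single vertex is a disjoint scramble of order $1$. For $k=2$, I will use that any tree $T$ has $\sn(T)=1$ (witnessed by its natural tree-cut decomposition of width $1$, combined with the bound $\sn\leq \scw$ from the preliminaries), so $\sn(G)\geq 2$ forces $G$ to contain a cycle $C$; taking any two vertices $u,v$ on $C$, the disjoint scramble $\{\{u\},\{v\}\}$ has hitting number $2$ and, by Menger applied to the two edge-disjoint $uv$-paths along $C$, egg-cut at least $2$.

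For $k=3$ I fix a scramble $\mathcal{S}$ of order $3$ and split into two subcases. If $\mathcal{S}$ contains three pairwise disjoint eggs $E_1,E_2,E_3$, then $\{E_1,E_2,E_3\}$ is a disjoint subset scramble with hitting number $3$ and, by Proposition \ref{egg-cut-subset-scramble}, egg-cut at least $e(\mathcal{S})\geq 3$, so its order is $3$. Otherwise, I will step away from $\mathcal{S}$ and instead look for three vertices $u,v,w\in V(G)$ that are pairwise $3$-edge-connected in $G$; the singleton disjoint scramble $\{\{u\},\{v\},\{w\}\}$ then has hitting number $3$, and for each nontrivial bipartition of $\{u,v,w\}$ Menger's theorem supplies three edge-disjoint paths joining some separated pair, forcing the corresponding egg-cut to have size at least $3$.

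The main obstacle will be to establish that $\sn(G)\geq 3$ really does guarantee the existence of such a triple $u,v,w$. My plan is to argue by contrapositive: suppose every $3$-edge-connected equivalence class of $V(G)$ (under the relation ``joined by three edge-disjoint paths in $G$'') has size at most two. I will then assemble the block structure of $G$ along its $2$-edge cuts into an explicit tree-cut decomposition of width at most $2$ by placing the small equivalence classes in the bags and routing the $2$-edge cuts through the links; since the bags have size at most $2$ and every link carries at most $2$ adhesion edges, this will give $\scw(G)\leq 2$, so by the preliminaries $\sn(G)\leq \scw(G)\leq 2$, contradicting $\sn(G)\geq 3$. Finally, the ``moreover'' clause follows because the contrapositive of $\sn(G)\geq 3\Rightarrow \dsn(G)\geq 3$ is exactly $\dsn(G)\leq 2\Rightarrow \sn(G)\leq 2$, which combined with $\dsn(G)\leq \sn(G)$ from Proposition \ref{dsn-sn-cart-inequality-chain} gives the claimed equality.
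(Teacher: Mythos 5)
Your overall architecture is sound: the reduction of both claims to the implications $\sn(G)\geq k\Rightarrow\dsn(G)\geq k$ for $k\leq 3$, the cases $k=1,2$, and the subcase of three pairwise disjoint eggs are all handled correctly. But the argument is incomplete exactly where the real content of the proposition lives, namely the claim that $\sn(G)\geq 3$ forces three pairwise $3$-edge-connected vertices. You call this ``the main obstacle'' and supply only a plan, and that plan has genuine unresolved difficulties. First, an order-$3$ scramble only guarantees that separable pairs of eggs have set-to-set edge connectivity at least $3$, and three edge-disjoint paths between two connected subgraphs may have six distinct endpoints, so there is no short route from the scramble to a $3$-edge-connected vertex pair, let alone a triple; your own subcase split implicitly concedes this. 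Second, the contrapositive you propose --- that if every $3$-edge-connectivity class has at most two vertices then $\scw(G)\leq 2$ --- is essentially the full strength of the characterization of graphs of scramble number at most two, i.e.\ the main content of \cite{sn_two}, which the paper handles by citation rather than by proof. Third, the sketched construction leaves the hard parts open: a node whose bag is a two-element class already contributes $2$ to the bag width, so no edge may tunnel through it; the number of edges leaving a two-element class can greatly exceed $2$, and distributing them over incident links with at most $2$ per link requires the family of $2$-edge-cuts of $G$ to be realizable by a single tree in a compatible way; and the classes need not induce connected subgraphs (the two degree-$3$ vertices of $K_{2,3}$ form a class with no edge between them), so ``routing the $2$-edge cuts through the links'' is not an off-the-shelf operation. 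Until this lemma is actually proved, the $k=3$ case is not established.

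For comparison, the paper's proof is a citation: the first claim is exactly \cite[Proposition 5.2]{sn_two}, and the ``moreover'' clause is extracted from the proof given there, which shows that a graph with a scramble of order $3$ also has a disjoint scramble of order $3$. Your derivation of the ``moreover'' clause from the implication $\sn(G)\geq 3\Rightarrow\dsn(G)\geq 3$ matches the paper's logic and is correct; it is only the proof of that implication itself that is missing.
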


\begin{proof}
The first claim is the content of \cite[Proposition 5.2]{sn_two}.  To prove the second claim, then by the first claim it's enough to show that if $\sn(G)\geq 3$ then $\dsn(G)\geq 3$.  This follows from the proof of \cite[Proposition 5.2]{sn_two}, where it is argued that a graph with a scramble of order $3$ also has a disjoint scramble of order $3$.
\end{proof}

\section{Properties of Carton Number} \label{carton-props-section}

In this section we establish properties of carton number.  Before we do so, we prove a useful statement regarding the relation between the order of a scramble and the hitting number of a corresponding equal-order subset scramble. 

\begin{proposition} \label{subset-scrambles-hitting}
    Given any graph $G$ with scramble $\mathcal S$, there exists a subset scramble $\mathcal S' \subseteq \mathcal S$ such that $||\mathcal S'|| = ||\mathcal S||$ and $h(\mathcal S') = ||S'||$. 
\end{proposition}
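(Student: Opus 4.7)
The plan is to produce $\mathcal{S}'$ via a minimality argument: among all subset scrambles of $\mathcal{S}$ whose order is at least $||\mathcal{S}||$, take one with the fewest eggs. The key insight driving this is the monotonicity asymmetry under egg removal: the hitting number can only decrease while the egg-cut number can only increase (Proposition \ref{egg-cut-subset-scramble}). Shrinking $\mathcal{S}$ therefore drives the hitting number down until it becomes the binding term in $\min(h,e)$, which is exactly what we want.

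Concretely, if $h(\mathcal{S}) = ||\mathcal{S}||$ already, simply set $\mathcal{S}' = \mathcal{S}$. Otherwise, let $\mathcal{S}'$ be a minimum-size subset scramble of $\mathcal{S}$ satisfying $||\mathcal{S}'|| \geq ||\mathcal{S}||$; this collection is finite and nonempty since $\mathcal{S}$ itself qualifies. The degenerate case $||\mathcal{S}|| = 1$ is immediate by taking $\mathcal{S}'$ to be any singleton subscramble of $\mathcal{S}$, so we may assume $||\mathcal{S}|| \geq 2$, whence $|\mathcal{S}'| \geq 2$ (a single-egg scramble has order $1$). By Proposition \ref{egg-cut-subset-scramble}, $e(\mathcal{S}') \geq e(\mathcal{S}) \geq ||\mathcal{S}||$.

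Now remove any single egg $B$ from $\mathcal{S}'$ to obtain $\mathcal{S}''$. By minimality of $\mathcal{S}'$, we have $||\mathcal{S}''|| < ||\mathcal{S}||$, and since $e(\mathcal{S}'') \geq e(\mathcal{S}') \geq ||\mathcal{S}||$ by another application of Proposition \ref{egg-cut-subset-scramble}, the strict inequality must be witnessed by the hitting number: $h(\mathcal{S}'') < ||\mathcal{S}||$. On the other hand, any hitting set for $\mathcal{S}''$ together with any single vertex of $B$ is a hitting set for $\mathcal{S}'$, so $h(\mathcal{S}') \leq h(\mathcal{S}'') + 1 \leq ||\mathcal{S}||$. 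Combined with the chain $h(\mathcal{S}') \geq ||\mathcal{S}'|| \geq ||\mathcal{S}||$, this forces $h(\mathcal{S}') = ||\mathcal{S}||$; then $||\mathcal{S}'|| = \min(h(\mathcal{S}'), e(\mathcal{S}')) = ||\mathcal{S}||$ and $h(\mathcal{S}') = ||\mathcal{S}'||$ as required.

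The proof is not conceptually deep; the main care needed is in orchestrating the minimality step so that it pinpoints the hitting number (rather than the egg-cut number) as the quantity being pushed down, and in handling the edge case $|\mathcal{S}'| = 1$ where the egg-cut number is $\infty$ by convention.
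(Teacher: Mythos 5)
Your proof is correct and rests on exactly the same two facts as the paper's: removing one egg decreases the hitting number by at most one, while the egg-cut number can only increase under taking subsets (Proposition \ref{egg-cut-subset-scramble}). The only difference is packaging — the paper walks down an explicit chain $\mathcal{S}_0 \supset \mathcal{S}_1 \supset \cdots$ and invokes a discrete intermediate-value argument to find the index where $h$ equals $||\mathcal{S}||$, whereas you reach the same scramble by taking a minimum-size subset scramble of order at least $||\mathcal{S}||$ and examining what happens when one more egg is removed.
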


In other words, any scramble has a subset scramble of equal order, and that order is given by the subset scramble's hitting number.

\begin{proof}
   If $h(\mathcal S) = ||\mathcal S||$, then we are immediately done.  Otherwise, it must be the case that $h(\mathcal S) > ||\mathcal S||$ by definition of the order of a scramble.  We claim that there exists some proper subset $\mathcal S'$ of $\mathcal S$ such that $||S|| = ||\mathcal S'|| = h(\mathcal S')$.
    
    To see this is the case, denote $\mathcal S_0 = \mathcal S$, and starting from $i = 0$ iteratively obtain $\mathcal S_{i+1}$ from $\mathcal S_i$ by arbitrarily removing one egg, doing so for all $0 \leq i \leq m - 1$ where $m = |\mathcal S|$.  Then $h(\mathcal S_i) \geq h(\mathcal S_{i+1})$ as any hitting set for $\mathcal{S}_i$ is also a hitting set for $\mathcal S_{i+1}$.  However, we also have that $h(\mathcal S_i) \leq h(\mathcal S_{i+1})+1$ since any hitting set of $\mathcal S_{i+1}$, together with an arbitrary vertex of the egg deleted from $\mathcal{S}_i$, is a hitting set for $\mathcal{S}_i$.  Thus, for each integer $0 \leq i \leq m-1$, we have that
    \[ h(\mathcal S_i) - 1 \leq h(\mathcal S_{i+1}) \leq h(S_i).\]
    As $h(\mathcal S_0) > ||\mathcal S||$ and $h(\mathcal S_n) = 1 < ||\mathcal S||$, the above inequality and the fact that all values are integers implies that there exists $j$ such that $h(\mathcal S_j) = ||\mathcal S||$. Furthermore, by Proposition \ref{egg-cut-subset-scramble}, necessarily $e(\mathcal S) \leq e(\mathcal S_j)$.  As $||\mathcal S|| \leq e(\mathcal S) \leq e(\mathcal S_j)$, it follows that
    \[||\mathcal S_j|| = \min(h(\mathcal S_j), e(\mathcal S_j)) = \min(||\mathcal S||, e(\mathcal S_j)) = ||\mathcal S||.\]
    Thus, letting $\mathcal S' = \mathcal S_j$, we have that $\mathcal S'$ is a subset scramble of $\mathcal S$ of equal order such that $h(\mathcal S') = ||\mathcal S'||$ as desired. 
\end{proof}

From this, we immediately obtain the following result about the existence of maximum order scrambles with hitting number equal to scramble number.

\begin{corollary} \label{max-order-scramble-hitting}
    Given any graph $G$ with maximum order scramble $\mathcal S$, there exists a maximum order scramble $\mathcal S'$ which is a subset of $\mathcal S$ such that $h(\mathcal S') = \sn(G)$.
\end{corollary}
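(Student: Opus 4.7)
The plan is to observe that this corollary is essentially an immediate restatement of Proposition \ref{subset-scrambles-hitting} in the special case where the starting scramble already has maximum order. No new construction is required; the work has already been done in the preceding proposition.

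Specifically, I would start with the given maximum order scramble $\mathcal{S}$, so that $||\mathcal{S}|| = \sn(G)$ by hypothesis. Apply Proposition \ref{subset-scrambles-hitting} directly to $\mathcal{S}$ to produce a subset scramble $\mathcal{S}' \subseteq \mathcal{S}$ satisfying $||\mathcal{S}'|| = ||\mathcal{S}||$ and $h(\mathcal{S}') = ||\mathcal{S}'||$. Chaining these two equalities with the hypothesis gives $||\mathcal{S}'|| = \sn(G)$, so $\mathcal{S}'$ is itself a maximum order scramble on $G$, and $h(\mathcal{S}') = ||\mathcal{S}'|| = \sn(G)$, which is exactly the claim.

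There is no real obstacle here, since the delicate argument (iteratively removing eggs one at a time and using the fact that removing a single egg decreases the hitting number by at most one) is entirely contained in the proof of Proposition \ref{subset-scrambles-hitting}. The only thing to verify is that $\mathcal{S}'$ being a subset of $\mathcal{S}$ is preserved, which is explicit in the proposition's statement. Thus the proof can be written in essentially one line as an immediate application.
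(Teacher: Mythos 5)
Your proposal is correct and matches the paper's proof exactly: both apply Proposition \ref{subset-scrambles-hitting} to the given maximum order scramble and chain the resulting equalities $h(\mathcal S') = ||\mathcal S'|| = ||\mathcal S|| = \sn(G)$. Nothing further is needed.
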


\begin{proof}
    Given a maximum order scramble $\mathcal S$ of $G$, apply Proposition \ref{subset-scrambles-hitting} to find $\mathcal S'\subset \mathcal S$ such that $||\mathcal S'|| = || \mathcal S||$ and $h(\mathcal S') = ||\mathcal S'||$.  As $\mathcal S$ is a maximum order scramble and $\mathcal S'$ has the same order, we have $||\mathcal S'|| = \sn(G)$.
\end{proof}

Finally, we relate this result to carton number.

\begin{corollary}
  For any carton scramble $\mathcal S_{\cart}$ on a graph $G$, we have $h(\mathcal S_{\cart}) = \sn(G)$.
\end{corollary}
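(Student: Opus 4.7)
The plan is to combine Corollary \ref{max-order-scramble-hitting} with the minimality built into the definition of $\cart(G)$. Let $\mathcal{S}_{\cart}$ be a carton scramble, so $||\mathcal{S}_{\cart}|| = \sn(G)$ and $|\mathcal{S}_{\cart}| = \cart(G)$. Since $||\mathcal{S}_{\cart}|| = \min(h(\mathcal{S}_{\cart}), e(\mathcal{S}_{\cart}))$, one inequality $h(\mathcal{S}_{\cart}) \geq \sn(G)$ is immediate from the definition of scramble order.

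For the reverse inequality, I would argue by contradiction: suppose $h(\mathcal{S}_{\cart}) > \sn(G)$. Apply Proposition \ref{subset-scrambles-hitting} (or equivalently Corollary \ref{max-order-scramble-hitting}) to obtain a subset scramble $\mathcal{S}' \subseteq \mathcal{S}_{\cart}$ with $||\mathcal{S}'|| = ||\mathcal{S}_{\cart}|| = \sn(G)$ and $h(\mathcal{S}') = ||\mathcal{S}'|| = \sn(G)$. Since $h(\mathcal{S}') = \sn(G) < h(\mathcal{S}_{\cart})$, the two scrambles have distinct hitting numbers, so $\mathcal{S}'$ must be a proper subset of $\mathcal{S}_{\cart}$. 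Hence $|\mathcal{S}'| < |\mathcal{S}_{\cart}| = \cart(G)$, which contradicts the definition of $\cart(G)$ as the minimum size of a maximum order scramble.

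The argument is essentially a one-liner once Corollary \ref{max-order-scramble-hitting} is in hand, so there is no serious obstacle; the only subtlety to flag is the need to observe that $\mathcal{S}'$ is strictly smaller than $\mathcal{S}_{\cart}$, which follows because deleting no eggs preserves the hitting number exactly. Thus we conclude $h(\mathcal{S}_{\cart}) = \sn(G)$, as desired.
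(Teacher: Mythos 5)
Your proof is correct and follows essentially the same route as the paper: both invoke Corollary \ref{max-order-scramble-hitting} to extract a subset scramble with hitting number equal to $\sn(G)$, then use the minimality in the definition of carton number to force that subset to be all of $\mathcal{S}_{\cart}$. The only difference is cosmetic — you frame it as a contradiction with $h(\mathcal{S}_{\cart}) > \sn(G)$, while the paper argues directly that the subset cannot be proper.
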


\begin{proof}
    By Corollary \ref{max-order-scramble-hitting} and the fact that $\mathcal S_{\cart}$ must be a maximum order scramble of $G$, necessarily $\mathcal S_{\cart}$ must contain a maximum order scramble $\mathcal S'$ such that $h(\mathcal S') = \sn(G)$.  However, if $S'$ was a proper subset of $\mathcal S_{\cart}$, then this would contradict the fact that $\mathcal S_{\cart}$ is a carton scramble, as $\mathcal S'$ would be a maximum order scramble of size less than $\mathcal S_{\cart}$.  Thus, it follows that $\mathcal S' = \mathcal S_{\cart}$, and the result follows as desired.  
\end{proof}


Given the proof strategy of paring down scrambles, it is natural to ask whether all maximum order scrambles can be pared down to a carton scramble; or perhaps to a scramble of polynomial size.  The following result shows that this is not the case.

\begin{proposition}
    Let $G$ be a graph on $n$ vertices, and $\mathcal{S}$ a maximum order scramble on $G$.  Then $\mathcal{S}$ can be pared down to a maximum scramble $\mathcal{S}'$ such that $|\mathcal{S}'|\leq {n \choose \lfloor n/2\rfloor}$.  Moreover, this bound is sharp for $n\geq 6$:  for every such $n$, there exists a graph $G$ on $n$ vertices and a maximum order scramble $\mathcal{S}$ with $|\mathcal{S}|={n \choose \lfloor n/2\rfloor}$ such that deleting any egg from $\mathcal{S}$ decreases the order of the scramble, and such that $\mathcal{S}$ is not a carton scramble.
\end{proposition}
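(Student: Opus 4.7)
The plan is to handle the two parts of the proposition separately: the upper bound via Sperner's theorem applied to ``order-minimal'' scrambles, and the sharpness via an explicit family of complete tripartite graphs carrying uniform scrambles.

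For the upper bound, I would start with the given maximum-order scramble $\mathcal{S}$ and iteratively delete any egg whose removal does not strictly decrease $||\mathcal{S}||$, terminating at an order-minimal max-order subset scramble $\mathcal{S}'$ (this is legal since any such deletion leaves the scramble nonempty as long as its order remains positive, and the process must terminate as $\mathcal{S}$ is finite). The central observation is that the vertex sets $V(E)$ for $E \in \mathcal{S}'$ are distinct and form an antichain in $2^{V(G)}$: if $V(E_1) \subseteq V(E_2)$ for two eggs of $\mathcal{S}'$, then every hitting set meeting $V(E_1)$ automatically meets $V(E_2)$, so the hitting number is unchanged by removing $E_2$, and by Proposition \ref{egg-cut-subset-scramble} the egg-cut number only increases, contradicting order-minimality. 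Sperner's theorem then bounds the size of this antichain by $\binom{n}{\lfloor n/2\rfloor}$, completing the first half.

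For sharpness, for each $n \geq 6$ let $G_n$ be the complete tripartite graph $K_{n/2-1,\,n/2-1,\,2}$ when $n$ is even and $K_{(n-1)/2,\,(n-1)/2,\,1}$ when $n$ is odd, and let $\mathcal{S} = \varepsilon_{\lceil n/2 \rceil}$. Because every part of $G_n$ has size strictly less than $\lceil n/2 \rceil$, each $\lceil n/2 \rceil$-subset meets at least two parts and so induces a connected subgraph; hence $|\mathcal{S}| = \binom{n}{\lceil n/2 \rceil} = \binom{n}{\lfloor n/2 \rfloor}$. A short computation gives $h(\mathcal{S}) = \lfloor n/2 \rfloor + 1$, and either $e(\mathcal{S}) = \infty$ for $n$ odd (since any two $\lceil n/2 \rceil$-subsets of an $n$-set overlap) or $e(\mathcal{S}) \geq \lfloor n/2 \rfloor + 1$ for $n$ even (via a brief case analysis of balanced cuts in $G_n$), so $||\mathcal{S}|| = \lfloor n/2 \rfloor + 1$. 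Using the formula $\gon(K_{n_1,n_2,n_3}) = n - \max_i n_i$ gives $\sn(G_n) \leq \lfloor n/2 \rfloor + 1$, so $\mathcal{S}$ is a maximum-order scramble of the stated size.

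To close, I would check the two remaining claims. First, removing any egg $T$ from $\mathcal{S}$ strictly decreases the order: $V(G_n) \setminus V(T)$ has exactly $\lfloor n/2 \rfloor$ vertices and meets every other $\lceil n/2 \rceil$-subset, so it is a hitting set for $\mathcal{S} \setminus \{T\}$, forcing the new hitting number to drop. Second, $\mathcal{S}$ is not a carton scramble, because the vertex scramble $\varepsilon_1$ has $n$ eggs, hitting number $n$, and egg-cut number $\lambda(G_n) = \delta(G_n) = \lfloor n/2 \rfloor + 1$, so it is a maximum-order scramble of size $n < \binom{n}{\lfloor n/2 \rfloor}$ (for $n \geq 6$). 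The single step I expect to require the most care is pinning down the upper bound $\sn(G_n) \leq \lfloor n/2 \rfloor + 1$: citing the gonality formula for complete multipartite graphs makes this immediate, but otherwise one must either construct an explicit positive-rank divisor on $G_n$ of the claimed degree (verifiable by Dhar's algorithm) or exhibit a tree-cut decomposition witnessing $\scw(G_n) \leq \lfloor n/2 \rfloor + 1$; both are tractable thanks to the tripartite structure but represent the only nonroutine technical ingredient.
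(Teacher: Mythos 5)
Your argument is correct. The first half is essentially the paper's: both pare the scramble down until the surviving vertex sets form an antichain and then invoke Sperner's theorem (the paper removes only eggs that contain other eggs, while you greedily remove any egg whose deletion preserves the order; your antichain justification---that a superset egg contributes nothing to the hitting number while its removal can only raise the egg-cut number---is exactly the paper's). The sharpness construction, however, is genuinely different. The paper takes $K_{m+1,m-1}$ with a cycle added on the larger side, computes $\sn=m+1$ via \cite[Corollary 3.2]{echavarria2021scramble} (minimum degree at least $\lfloor n/2\rfloor+1$ forces $\sn=n-\alpha$), argues $\kappa(G)=m+1$ by hand to get connectivity of the middle-layer eggs and the egg-cut bound, and witnesses non-cartonness with the $2$-uniform scramble via \cite[Lemma 3.1]{echavarria2021scramble}. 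You instead use the complete tripartite graphs $K_{m-1,m-1,2}$ and $K_{m,m,1}$, where connectivity of every $\lceil n/2\rceil$-subset is immediate (each meets two parts), the upper bound $\sn\le\lfloor n/2\rfloor+1$ falls out of $\gon\le n-\alpha$ for complete multipartite graphs, and non-cartonness is witnessed by the vertegg scramble of order $\min(n,\lambda(G))=m+1$. Your route trades the paper's ad hoc connectivity computation for a cleaner structural one, at the cost of leaving the even-case egg-cut bound as an unexecuted ``brief case analysis''; that claim does check out (a balanced cut of $K_{m-1,m-1,2}$ with intersection pattern $(a,b,c)$ crosses $a^2+b^2+c^2+m+c(m-3)\ge 2m$ edges for $m\ge 3$), but you should include the computation, and you should make explicit that an egg-cut here is necessarily balanced because each side must contain an $m$-vertex egg. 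With those details filled in, both proofs are complete and of comparable length.
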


\begin{proof}
    Given a maximal order scramble $\mathcal{S}$ on $G$, define $\mathcal{S}'$ by iteratively deleting eggs from $\mathcal{S}$ that contain any other eggs of $\mathcal{S}$.  As with any subset scramble, we have $h(\mathcal{S}')\leq h(\mathcal{S})$ and $e(\mathcal{S}')\geq e(\mathcal{S})$.  Moreover, since any egg of $\mathcal{S}$ contains an egg of $\mathcal{S}'$ as a subgraph, any hitting set for $\mathcal{S}'$ is a hitting set for $\mathcal{S}$; thus $h(\mathcal{S}')= h(\mathcal{S})$.  And, any egg-cut for $\mathcal{S}$, say separating eggs $E_1$ and $E_2$, is also an egg-cut for $\mathcal{S}'$, as it separates eggs that are subgraphs of $E_1$ and of $E_2$. Thus $e(\mathcal{S}')=e(\mathcal{S})$. It follows that $||\mathcal{S}'||=||\mathcal{S}||=\sn(G)$.

    To bound the size of $\mathcal{S}'$, we note that the vertex sets of its eggs form a collection of distinct, nonempty subsets of $V(G)$, none of which contain each other.  The largest number of subsets of an $n$-element set that are pairwise incomparable is ${n \choose \lfloor n/2\rfloor}$, achieved for instance by choosing all size $\lfloor n/2\rfloor$ -- this is known as Sperner's Theorem.  Thus this forms a bound on $|\mathcal{S}'|$.

    To show this bound is sharp, assume first that $n$ is even, and write $n=2m$.  Construct $G$ first by considering $K_{m+1,m-1}$, the complete bipartite graph on $m+1$ and $m-1$ vertices; and then add edges to the set of $m+1$ vertices so that they form a cycle. Figure \ref{figure:K_46_cycle} illustrates this graph for $n=10$.  Since $G$ is simple and the minimum degree is at least $\lfloor n/2\rfloor +1$, we may apply \cite[Corollary 3.2]{echavarria2021scramble} to deduce that $\sn(G)=n-\alpha(G)=2m-(m-1)=m+1$.

    \begin{figure}[hbt]
    \centering
    \includegraphics[scale = 1]{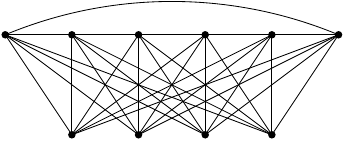}
    \caption{A complete bipartite graph $K_{6, 4}$ with cycle added on the side with $6$ vertices.}
    \label{figure:K_46_cycle}
\end{figure}

     We claim that $\kappa(G)=m+1$.  Indeed, as long as at least one vertex from each of the two sides remains, the graph is connected.  So either the $m+1$ vertices on the larger side must be deleted; or the $m-1$ vertices on the smaller side must be deleted, followed by enough vertices on the larger side to disconnect the cycle, adding two.  Thus the $m$-uniform scramble $\varepsilon_m$ on $G$, whose eggs consist of all connected subgraphs on $m$ vertices, has $\binom{n}{m}$ eggs.  Moreover, $h(\varepsilon_m)=m+1$, since any set $S$ of $m$ vertices fails to hit at least one egg (namely the egg whose vertices are $V(G)-S$); and $e(\varepsilon_m)\geq \lambda(G)\geq \kappa(G)=1$.  Thus $\varepsilon_m$ is a maximal order scramble.  However, deleting a single egg would decrease its order, since the set of vertices not in that egg would form a hitting set of size $m$.  Thus we have a maximal order scramble of size $n\choose \lfloor n/2\rfloor$ that cannot be pared down further.  It is not, however, a carton scramble:  by \cite[Lemma 3.1]{echavarria2021scramble}, the $2$-uniform scramble $\varepsilon_2$ also has order $n-\alpha(G)$, and with ${n\choose 2}$ eggs it is smaller than $\varepsilon_m$ since $n\geq 6$.

     For $n$ odd, say $n=2m+1$, construct $G$ as $K_{m+1,m}$ and add edges to the $m+1$ vertices to form a cycle.  The same argument as above shows $\sn(G)=n-\alpha(G)=2m+1-m=m+1$; that $\kappa(G)=m+1$;  that $|\varepsilon_{m+1}|={n \choose m+1}$; that $||\varepsilon_{m+1}||=m+1$; that removing any eggs from $\varepsilon_{m+1}$ decreases its order; and that $\varepsilon_2$ has strictly fewer eggs while still achieving scramble number.
\end{proof}
    
We now characterize precisely when carton number and scramble number are equal.

\begin{theorem} \label{cart=sn}
    For any graph $G$, $\cart(G) = \sn(G)$ if and only if $\dsn(G) = \sn(G)$.
\end{theorem}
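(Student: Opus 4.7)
The plan is to prove both directions of the biconditional using Proposition \ref{subset-scrambles-hitting} and the corollary just established that $h(\mathcal{S}_{\cart}) = \sn(G)$ for any carton scramble, together with the inequality chain of Proposition \ref{dsn-sn-cart-inequality-chain}.

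For the reverse direction ($\dsn(G) = \sn(G) \Rightarrow \cart(G) = \sn(G)$), I would take a disjoint scramble $\mathcal{D}$ of order $\sn(G)$ and apply Proposition \ref{subset-scrambles-hitting} to extract a subset scramble $\mathcal{D}' \subseteq \mathcal{D}$ satisfying $\|\mathcal{D}'\| = \sn(G)$ and $h(\mathcal{D}') = \|\mathcal{D}'\|$. Any subset of a disjoint scramble is disjoint, so the hitting number of $\mathcal{D}'$ equals $|\mathcal{D}'|$, giving $|\mathcal{D}'| = \sn(G)$. This witnesses $\cart(G) \leq \sn(G)$, and combining with $\sn(G) \leq \cart(G)$ from Proposition \ref{dsn-sn-cart-inequality-chain} yields equality.

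For the forward direction ($\cart(G) = \sn(G) \Rightarrow \dsn(G) = \sn(G)$), let $\mathcal{S}$ be a carton scramble on $G$ with $k := |\mathcal{S}| = \cart(G) = \sn(G)$. By the preceding corollary, $h(\mathcal{S}) = k$. The key step is to show that the eggs of $\mathcal{S}$ are pairwise vertex-disjoint, for then $\mathcal{S}$ itself is a disjoint scramble of order $\sn(G)$, forcing $\dsn(G) \geq \sn(G)$, and the reverse inequality holds always. Suppose for contradiction that two distinct eggs $E_i, E_j \in \mathcal{S}$ share a vertex $v$. Then the set $\{v\} \cup \{w_\ell : \ell \in \{1,\ldots,k\} \setminus \{i,j\}\}$, where each $w_\ell$ is an arbitrary vertex of $E_\ell$, is a hitting set of size at most $k-1$, since $v$ already hits both $E_i$ and $E_j$. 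This contradicts $h(\mathcal{S}) = k$, completing the argument.

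The main obstacle is the forward direction: a priori one might worry that a carton scramble could have overlapping eggs, and that extracting a disjoint subscramble of the same order would require a nontrivial combinatorial construction. The observation that unlocks the proof is that the rigid equality $h(\mathcal{S}) = |\mathcal{S}|$ which a carton scramble must satisfy is strong enough to \emph{force} disjointness via the one-step consolidation of any shared vertex. Once this is in hand, both directions follow without further work.
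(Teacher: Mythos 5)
Your proposal is correct and follows essentially the same route as the paper: the reverse direction pares the disjoint scramble down via Proposition \ref{subset-scrambles-hitting} to one whose size equals its order, and the forward direction derives a contradiction from a shared vertex yielding a hitting set of size $\sn(G)-1$. The only cosmetic difference is that you invoke the corollary $h(\mathcal{S}_{\cart})=\sn(G)$ where the paper argues directly from $\|\mathcal{S}\|=|\mathcal{S}|=\sn(G)$; both are valid.
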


\begin{proof}
    Let $G$ be any graph. We will prove the implication in each direction.
        
    First suppose $\dsn(G) = \sn(G)$. This means there exists a disjoint scramble $\scramble$ with $\| \scramble \| = \sn(G)$.  By Proposition \ref{max-order-scramble-hitting}, there exists some maximum order scramble $\scramble'$ such that $\scramble' \subseteq \scramble$ and $h(\scramble') = \sn(G)$. As a subset of a disjoint scramble, $\scramble'$ is itself a disjoint scramble, implying $h(\scramble') = |\scramble ' |$, and so $|\scramble ' | = \sn(G)$.  As $\mathcal S'$ is a maximum order scramble we have $\cart(G) \leq |\scramble ' | = \sn(G)$, and as $\cart(G) \geq \sn(G)$ for any graph we conclude that $\cart(G) =\sn(G)$.

    Now suppose $\cart(G) = \sn(G)$. Then there exists a scramble $\scramble$ with $\|\scramble\| = |\scramble| = \sn(G)$. If $\scramble$ is not disjoint, then some pair of eggs share a vertex. But then there is a hitting set that includes that vertex and one vertex from each other egg. This hitting set has size $\sn(G) - 1$, which is a contradiction to $\|\scramble\| = \sn(G)$. Therefore $\scramble$ is disjoint, so $\dsn(G) = \sn(G)$.
\end{proof}


We can also combine our result with Proposition \ref{dsn=sn} to characterize carton number for graphs of scramble number at most $3$.

\begin{corollary} \label{cart=sn=dsn}
    For all graphs $G$ with $\sn(G) \leq 3$, we have that $\cart(G) = \sn(G) = \dsn(G)$.
\end{corollary}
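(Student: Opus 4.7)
The plan is to obtain this corollary as a direct consequence of chaining together two results already established in the excerpt. First, I would invoke Proposition \ref{dsn=sn}, which states that whenever $\sn(G)\leq 3$ we have $\sn(G)=\dsn(G)$; this gives the second of the two desired equalities immediately, with no further work required.

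With $\dsn(G)=\sn(G)$ in hand, I would then apply the forward direction of Theorem \ref{cart=sn}, which asserts the biconditional $\cart(G)=\sn(G) \iff \dsn(G)=\sn(G)$. The hypothesis $\dsn(G)=\sn(G)$ just established triggers the conclusion $\cart(G)=\sn(G)$, producing the first equality and completing the chain $\cart(G)=\sn(G)=\dsn(G)$. There is essentially no obstacle here: the corollary is a short combinatorial deduction from the two named results, and the only thing to be careful about is lining up the implication directions correctly (we use Proposition \ref{dsn=sn} to verify the hypothesis of Theorem \ref{cart=sn}, not the other way around). A one-line proof of the form ``By Proposition \ref{dsn=sn}, $\sn(G)=\dsn(G)$, and then Theorem \ref{cart=sn} yields $\cart(G)=\sn(G)$'' suffices.
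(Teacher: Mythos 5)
Your proposal is correct and matches the paper's own proof exactly: both invoke Proposition \ref{dsn=sn} to get $\sn(G)=\dsn(G)$ for $\sn(G)\leq 3$, and then apply Theorem \ref{cart=sn} to conclude $\cart(G)=\sn(G)$. Nothing further is needed.
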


\begin{proof}
Note that by Proposition \ref{dsn=sn}, any graph $G$ with $\sn(G) \leq 3$ satisfies $\sn(G) = \dsn(G)$, and by Theorem \ref{cart=sn} this then implies that $\cart(G) = \sn(G)$ as well.  
\end{proof} 

Another family of graphs we consider are those with an edge connectivity of 1, i.e. graphs which contain a bridge\footnote{In a connected graph, a bridge is an edge that, if deleted, disconnects the graph.}.  Given such a graph, it turns out to be trivial to compute its carton number given the carton and scramble numbers of its two constituent subgraphs connected by a bridge. 

\begin{proposition}
    Let $G$ be a graph with bridge $e = (u,v)$ such that removing $e$ from $G$ disconnects $G$ into two connected components $G_1$ and $G_2$. 
 Without loss of generality, we take  $\sn(G_1)\geq \sn(G_2)$; and if $\sn(G_1)=\sn(G_2)$, we take $\cart(G_1)\leq \cart(G_2)$.  Then, $\cart(G)=\cart(G_1)$.
    
\end{proposition}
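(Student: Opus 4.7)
The plan is to first show $\sn(G) = \sn(G_1)$, so that both sides of the stated equation refer to scramble-order-preserving data, and then to prove the two inequalities $\cart(G) \le \cart(G_1)$ and $\cart(G) \ge \cart(G_1)$ using a construction that replaces each spanning egg by its $G_1$-restriction. Write $u \in V(G_1)$ and $v \in V(G_2)$ for the endpoints of $e$. The inequality $\sn(G) \ge \sn(G_1)$ is immediate, since a scramble on $G_1$ extends to $G$ with the same hitting and egg-cut numbers: no vertex of $V(G_2)$ hits a $G_1$-egg, and no egg-cut for two $G_1$-eggs can use the bridge. For the reverse, let $\mathcal{S}$ be a scramble on $G$ with $||\mathcal{S}|| \ge 2$. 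If $\mathcal{S}$ had eggs entirely in $V(G_1)$ and eggs entirely in $V(G_2)$, the bridge alone would be an egg-cut of size $1$; and if every egg contained both $u$ and $v$, then $\{u\}$ would be a hitting set. So, up to swapping $G_1 \leftrightarrow G_2$, $\mathcal{S} = \mathcal{S}_{G_1} \sqcup \mathcal{S}_{\mathrm{span}}$, where each $F \in \mathcal{S}_{\mathrm{span}}$ contains both $u$ and $v$.

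For each such $F$, let $F^{(1)}$ be the connected subgraph of $G_1$ obtained from $F$ by deleting the bridge together with the $V(G_2)$-vertices of $F$; since the bridge is the only $G_1$--$G_2$ edge in $F$, the subgraph $F^{(1)}$ is connected and contains $u$. Define the scramble $\mathcal{S}^{\circ} := \mathcal{S}_{G_1} \cup \{F^{(1)} : F \in \mathcal{S}_{\mathrm{span}}\}$ on $G_1$. I would prove $||\mathcal{S}^{\circ}||_{G_1} \ge ||\mathcal{S}||_G$ in two pieces. Any hitting set $H^{\circ} \subseteq V(G_1)$ for $\mathcal{S}^{\circ}$ also hits $\mathcal{S}$, since $F^{(1)} \subseteq F$, so $h_{G_1}(\mathcal{S}^{\circ}) \ge h_G(\mathcal{S})$. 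Given any egg-cut $W^{\circ} \subseteq E(G_1)$ of $\mathcal{S}^{\circ}$, the bridge $e \notin W^{\circ}$ persists in $G - W^{\circ}$, so $V(G_2)$ reattaches to the component $C_u$ of $u$ in $G_1 - W^{\circ}$. Any $F^{(1)}$ contained in a single component of $G_1 - W^{\circ}$ must lie in $C_u$, so the two eggs witnessing $W^{\circ}$ as an egg-cut of $\mathcal{S}^{\circ}$ are either two $E_i, E_k \in \mathcal{S}_{G_1}$ in distinct components, or one $E_i$ outside $C_u$ and one $F^{(1)} \subseteq C_u$. Lifting $F^{(1)} \rightsquigarrow F$ produces an intact $\mathcal{S}$-egg inside $C_u \cup V(G_2)$, while the other $\mathcal{S}$-egg remains intact in its non-$u$ component; hence $W^{\circ}$ is an egg-cut for $\mathcal{S}$ in $G$, so $e_{G_1}(\mathcal{S}^{\circ}) \ge e_G(\mathcal{S})$. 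Combining, $||\mathcal{S}||_G \le ||\mathcal{S}^{\circ}||_{G_1} \le \sn(G_1)$, giving $\sn(G) = \sn(G_1)$.

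For carton number, $\cart(G) \le \cart(G_1)$ follows by viewing a carton scramble on $G_1$ as a scramble on $G$: its order is preserved by the same reasoning and equals $\sn(G_1) = \sn(G)$. For $\cart(G) \ge \cart(G_1)$, take a carton scramble $\mathcal{S}$ on $G$ of size $\cart(G)$ and order $\sn(G_1)$. If every egg lies in $V(G_1)$, then $\mathcal{S}$ is a max-order scramble on $G_1$ and $|\mathcal{S}| \ge \cart(G_1)$. If every egg lies in $V(G_2)$, the order $\sn(G_1)$ forces $\sn(G_2) = \sn(G_1)$, and the tie-breaking hypothesis yields $|\mathcal{S}| \ge \cart(G_2) \ge \cart(G_1)$. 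In the mixed $V(G_1)$-plus-spanning case, $\mathcal{S}^{\circ}$ is a max-order scramble on $G_1$ with $|\mathcal{S}^{\circ}| \le |\mathcal{S}_{G_1}| + |\mathcal{S}_{\mathrm{span}}| = |\mathcal{S}|$, so $\cart(G_1) \le |\mathcal{S}|$. The symmetric $V(G_2)$-plus-spanning case again forces $\sn(G_2) = \sn(G_1)$ and yields $\cart(G_1) \le \cart(G_2) \le |\mathcal{S}|$ via the tie-breaking hypothesis.

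The main obstacle is the egg-cut direction $e_{G_1}(\mathcal{S}^{\circ}) \ge e_G(\mathcal{S})$: the crux is that every $F^{(1)}$ is anchored at $u$, and so must lie inside $C_u$ whenever it fits in a single component of $G_1 - W^{\circ}$, while $V(G_2)$ reattaches precisely to $C_u$ when the bridge survives, allowing spanning eggs to be reconstituted intact after lifting. Once this splicing step is verified, the scramble-number equality and both carton-number inequalities reduce to routine bookkeeping over the three cases (all eggs in $V(G_1)$, all eggs in $V(G_2)$, mixed with spanning eggs).
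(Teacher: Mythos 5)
Your proof is correct and follows essentially the same route as the paper: establish $\sn(G)=\sn(G_1)$, push a carton scramble from $G_1$ up to $G$ for one inequality, and restrict a carton scramble on $G$ to one side (your $\mathcal{S}^{\circ}$ is exactly the ``intersect each egg with $G_i$'' construction) for the other, invoking the tie-breaking hypothesis when $\sn(G_1)=\sn(G_2)$. The only difference is that the paper outsources both the scramble-number equality and the order-preservation of the restriction to \cite[Lemma 2.4]{echavarria2021scramble}, whereas you verify them directly (and, like the paper, you should dispose of the trivial tree case $\sn(G)=1$ before assuming $\lVert\mathcal{S}\rVert\geq 2$).
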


\begin{proof} 
First we note that if $G$ is a tree, our claim holds as all scramble and carton numbers are equal to $1$.  We now assume $G$ is not a tree, so $\sn(G)\geq 2$.

By \cite[Lemma 2.4]{echavarria2021scramble}, we know \(\sn(G)=\max\{\sn(G_1),\sn(G_2)\}=\sn(G_1)\).  Any scramble on $G_1$, when considered as a scramble on $G$, has at least as high of an order:  hitting number remains unchanged, and egg-cut number could only go up.  Choose a carton scramble $\mathcal{S}'_\textrm{cart}$ on $G_1$, and let $\mathcal{S}$ be the same scramble on $G$.  Then,
\[\sn(G_1)=||\mathcal{S}'_\textrm{cart}||\leq ||\mathcal{S}||\leq \sn(G)=\sn(G_1).\]
It follows that all terms are equal and so $\mathcal{S}$ is a maximal order scramble on $G$. Since $\cart(G_1)=|\mathcal{S}'_\textrm{cart}|= |\mathcal{S}|$, we have $\cart(G)\leq \cart(G_1)$.

For the other direction, consider a carton scramble $\mathcal{S}_\textrm{cart}$ on $G$.  Since $\sn(G)\geq 2$, it cannot be that both $G_1$ and $G_2$ contain a complete egg; choose $i$ and $j$ so that $i\neq j$ and $G_j$ does not contain a complete egg.  As argued in the proof of \cite[Lemma 2.4]{echavarria2021scramble}, intersecting the eggs of $\mathcal{S}_\textrm{cart}$ with $G_i$ yields a scramble $\mathcal{S}'$ on $G_i$ with $||\mathcal{S}'||\geq ||\mathcal{S}_\textrm{cart}||$. Since
\[\sn(G)\geq \sn(G_i)\geq ||\mathcal{S}'||\geq||\mathcal{S}_\textrm{cart}||=\sn(G),\]
we know that all terms are equal.  In particular, $\sn(G)=\sn(G_i)=||\mathcal{S}'||$.

If $\sn(G_1)>\sn(G_2)$, then we have $i=1$ and $\mathcal{S}'$ is a maximum order scramble on $G_1$.  Since $|\mathcal{S}'|\leq |\mathcal{S}_\textrm{cart}|=\cart(G)$, we have $\cart(G_1)\leq \cart(G)$.  If $\sn(G_1)=\sn(G_2)$, then although we cannot immediately deduce the value of $i$, we do know that $\cart(G_i)\leq \cart(G)$.  Since $\cart(G_1)\leq \cart(G_i)$, we still conclude $\cart(G_1)\leq \cart(G)$.

\end{proof}

We now consider properties under which carton number is invariant, namely subdivision and smoothing.  

\begin{proposition} \label{subdivision-cart-equal}
    If $G'$ is a subdivision of $G$, then $\cart(G') = \cart(G)$
\end{proposition}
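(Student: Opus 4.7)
The plan is to induct on the number of single-vertex subdivisions taking $G$ to $G'$, reducing to the case in which $G'$ is obtained from $G$ by subdividing one edge $(u,w)$ with a single new degree-two vertex $v$. Throughout, Proposition \ref{prop:sn_invariant_subdivision} gives $\sn(G) = \sn(G')$, so the task is just to track the size of a minimum-size, maximum-order scramble across the subdivision. If $\sn(G) \leq 3$, I would simply invoke Corollary \ref{cart=sn=dsn} on both sides to conclude $\cart(G) = \sn(G) = \sn(G') = \cart(G')$, so it suffices to treat the case $\sn(G) \geq 4$.

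For the inequality $\cart(G') \leq \cart(G)$, I would start with a carton scramble $\mathcal{S}$ on $G$ and apply Proposition \ref{prop:specific_scrambles_subdivision}(i) to build a scramble $\mathcal{S}'$ on $G'$ with $||\mathcal{S}'|| \geq \sn(G) = \sn(G')$, hence of maximum order. Because part (i) only appends $v$ to those eggs that already contain $u$ and leaves the rest untouched, no two distinct eggs of $\mathcal{S}$ can coincide in $\mathcal{S}'$, so $|\mathcal{S}'| = |\mathcal{S}| = \cart(G)$. Conversely, for $\cart(G) \leq \cart(G')$, I would begin with a carton scramble $\mathcal{S}'$ on $G'$, whose order equals $\sn(G) \geq 4 \geq 3$, and apply Proposition \ref{prop:specific_scrambles_subdivision}(ii) to obtain a maximum-order scramble $\mathcal{S}$ on $G$; although two eggs of $\mathcal{S}'$ may collapse into the same egg after deleting $v$, the one-sided bound $|\mathcal{S}| \leq |\mathcal{S}'| = \cart(G')$ is all we need.

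The main obstacle is ensuring the construction in part (ii) actually produces a valid scramble on $G$: an egg equal to $\{v\}$ in $\mathcal{S}'$ would become empty under deletion, and eggs of the form $\{v,u,\ldots\}$ need the remainder to stay connected in $G$. Both issues are already absorbed into the hypothesis ``order at least $3$'' in Proposition \ref{prop:specific_scrambles_subdivision}(ii), which is precisely why splitting off the case $\sn(G) \leq 3$ via Corollary \ref{cart=sn=dsn} is the cleanest way to dispose of the small cases; the rest of the argument is then a routine matter of transporting scrambles back and forth.
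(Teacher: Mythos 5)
Your proposal is correct and follows essentially the same route as the paper's proof: reduce by induction to a single subdivision, dispose of $\sn(G)\leq 3$ via Corollary \ref{cart=sn=dsn}, and then transport carton scrambles in both directions using Proposition \ref{prop:specific_scrambles_subdivision}, tracking egg counts. Your added remarks on why eggs cannot collide in direction (i) and why a one-sided count suffices in direction (ii) are fine refinements of the same argument.
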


\begin{proof}
By induction, it will suffice to consider the case where $G'$ is obtained via the subdivision of a single edge between adjacent vertices $u$ and $w$, which introduces a new vertex $v$ between them.

Before considering arbitrary graphs, we note that this claim holds for any graph $G$ with $\sn(G)\leq 3$.  This is because for such graphs, we have
\[\cart(G)=\sn(G)=\sn(G')=\cart(G')\]
by Corollary \ref{cart=sn=dsn}.

For the remainder of the proof we can assume $\sn(G)\geq 4$. We first show that $\cart(G')\leq \cart(G)$.  Choose $\mathcal{S}$ to be a carton scramble on ${G}$, and let $\mathcal{S}'$ be the corresponding scramble on $G'$ from Proposition \ref{prop:specific_scrambles_subdivision}(i), which has order at least $||\mathcal{S}||=\sn(G)$.  Combined with Proposition \ref{prop:sn_invariant_subdivision}, this gives us \[\sn(G)=\sn(G')\geq ||\mathcal{S}'||\geq ||\mathcal{S}||=\sn(G),\]
so all terms are equal and $\mathcal{S}'$ is a maximum order scramble on $G'$.  The construction of $\mathcal{S}'$ left the number of eggs unchanged, so $\cart(G)=|\mathcal{S}|=|\mathcal{S}'|\geq \cart(G')$, as desired.

  Now we show $\cart(G)\leq \cart(G')$.  Choose a carton scramble $\mathcal{S}'$ on $G'$.  Since $||\mathcal{S}'||=\sn(G)\geq 4$, Proposition \ref{prop:specific_scrambles_subdivision}(ii) to construct $\mathcal{S}$ on
  $||\mathcal{S}||\geq ||\mathcal{S}'||$. This gives us
  \[\sn(G')=\sn(G)\geq ||\mathcal{S}||\geq ||\mathcal{S}'||=\sn(G'),\]
  so $||\mathcal{S}||$ is a maximum order scramble on $G$.  The construction of $\mathcal{S}$ could only decrease the number of eggs, so $\cart(G')=|\mathcal{S}'|\geq |\mathcal{S}|\geq \cart(G)$, as desired.

Having proven both inequalities, we conclude that $\cart(G') = \cart(G)$ as desired. 
\end{proof}

\begin{proposition}
    If $G'$ is a smoothing of $G$, then $\cart(G) = \cart(G')$
\end{proposition}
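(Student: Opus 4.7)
The plan is to reduce this statement immediately to the previous proposition. The key observation is that smoothing is literally the inverse operation of edge subdivision, as laid out in the preliminaries: if $G'$ is obtained from $G$ by smoothing a degree-$2$ vertex $v$ with distinct neighbors $u$ and $w$ (i.e.\ by deleting $v$, deleting the edges $(u,v)$ and $(v,w)$, and adding the edge $(u,w)$), then $G$ can be recovered from $G'$ by subdividing that new edge $(u,w)$, with the reintroduced vertex playing the role of $v$. Thus $G$ is a subdivision of $G'$ in the sense of the previous proposition.

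By induction on the number of smoothings performed, if $G'$ is obtained from $G$ by any series of smoothings, then $G$ is a subdivision of $G'$. I would then simply invoke Proposition \ref{subdivision-cart-equal} with the roles of the two graphs swapped: applied to the pair $(G', G)$, where $G$ is a subdivision of $G'$, it yields $\cart(G) = \cart(G')$, which is exactly the conclusion we want.

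The only thing worth flagging is well-definedness: the definition of smoothing in the preliminaries requires $v$'s two neighbors to be distinct, which is precisely what is needed for the reverse operation to qualify as an edge subdivision (rather than producing a loop). Since the paper's convention disallows loops but does allow multi-edges, there is no issue when $u$ and $w$ happen to be already adjacent in $G'$ — the subdivision simply acts on one of the parallel edges. There is no real obstacle here; the work has been done in establishing Proposition \ref{subdivision-cart-equal}, and this proposition is a one-line corollary.
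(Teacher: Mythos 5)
Your proposal is correct and matches the paper's own proof, which likewise observes that $G$ is a subdivision of $G'$ and invokes Proposition \ref{subdivision-cart-equal}. The extra remarks on induction and on distinct neighbors are fine but not needed beyond what the paper already records.
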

\begin{proof}
    Suppose $G'$ is a smoothing of $G$.  Then $G$ is a subdivision of $G'$, and hence $\cart(G) = \cart(G')$ by Proposition \ref{subdivision-cart-equal}.  
\end{proof}

We close this section by describing some operations under which carton number does not behave nicely; generally speaking, it can increase when passing to a ``simpler'' graph. First, it is not in general monotone (that is, non-increasing) under taking subgraphs. For instance, in the next section we will see that a $4\times 4$ rook's graph has carton number strictly greater than $16$.  It is a subgraph of the complete graph $K_{16}$, which has scramble number $15$ that can be obtained with the verteggs scramble (and thus has carton number at most 16).

Next, carton number is not monotone under the edge contractions associated to taking minors, in which two vertices are merged; nor is it monotone under the tunneling moves associated to taking immersion minors, in which edges $(u,v)$ and $(v,w)$ are deleted and an edge $(u,w)$ is added.  To see this, we refer to examples from previous literature:  \cite[Example 4.4]{new_lower_bound} and \cite[Example 2.8]{echavarria2021scramble}, reproduced in Figure \ref{figure:minor_and_immersion_minor}.  In the former, performing an edge contraction at $(g,h)$ turns a graph $G$ into a graph $G'$ with $\sn(G)=3$ and $\sn(G')=4$; we have $\cart(G')\geq \sn(G')=4$, and $\cart(G)=\sn(G)=3$ by Corollary \ref{cart=sn=dsn}, so carton number has increased.  In the latter, performing a pair of tunneling moves (first replacing $(a,c)$ and $(c,d)$ with $(a,d)$, then replacing $(a,d)$ and $(d,e)$ with $(a,e)$) turns a graph $H$ into a graph $H'$ with $\sn(H)=2$ and $\sn(H')=3$; we have $\cart(H)=\sn(H)=2$ and $\cart(H')=\sn(H')=3$ by Corollary \ref{cart=sn=dsn}, so carton number has increased. 

\begin{figure}[hbt]
    \centering
    \includegraphics[scale=0.8]{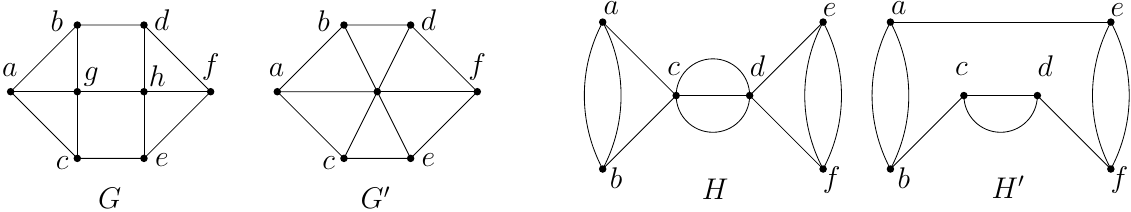}
    \caption{A graph $G$ with a minor $G'$ of higher carton number; and a graph $H$ with an immersion minor $H'$ of higher carton number.}
    \label{figure:minor_and_immersion_minor}
\end{figure}





\section{Bounds on Carton Number} \label{bounds-on-carton-number}

Thus far, we have yet to prove that any graph has particularly large carton number compared with $n$, the number of vertices.  In this section we first prove a lower bound on carton number that allows us to find explicit graphs with carton number strictly larger than $n$. Then, we exhibit an exponential lower bound on carton number for graphs of bounded degree and large scramble number.

\begin{proof}[Proof of Theorem \ref{general-lower-bound}]
Let $G = (V, E)$  be a graph such that $\Delta(G) < \sn(G)$.  Take a maximum order scramble $\mathcal S $ of $G$, so that $|| \mathcal S || = \sn(G)$.  As $\Delta(G) < \sn(G)$, we know that $\mathcal S $  does not contain any egg consisting of a single vertex $v$; if it did, then either the at most $\Delta(G)<\sn(G)$ edges incident to $v$ would form an egg-cut giving $||\mathcal{S}||\leq e(\mathcal{S})\leq \Delta(G)<\sn(G)$, or all eggs would contain  and $v$ and thus $||\mathcal{S}||\leq h(\mathcal{S})=1\leq \Delta(G)<\sn(G)$. Thus all eggs of $\mathcal S $ have two or more vertices.  

Next, fix a minimum hitting set of $\mathcal S$ and denote it as $A$.  Note that $|A|=h(\mathcal{S}) \geq \sn(G)$, and if we denote $B = V \setminus A$ then necessarily $|B| \leq |V| - \sn(G)$.  For each vertex $v \in A$, there must exist an egg $e_v \in \mathcal S$ such that $e_v \cap A = \{v\}$: if no such egg $e_v$ existed, then $A \setminus \{v\}$ would be a valid hitting set, but $A$ is a minimum hitting set.  For each $v \in A$, we can thus choose some such corresponding egg $e_v$, and denote the set of $|A|$ such eggs as $\mathcal S_A$.  However, as each egg of $\mathcal S_A$ is of order at least $2$ but has only one of its vertices in $A$, each egg of $\mathcal S_A$ must contain a vertex in $B$.  In particular, $B$ is a valid hitting set of $\mathcal S_A$.  However, we have
\[|B| \leq |V| - \sn(G)\quad\quad\textrm{and}\quad\quad|\mathcal S_A| = |A| \geq \sn(G). \]
Thus, at least $\sn(G)$ eggs of $\mathcal S$ (namely those in $\mathcal{S}_A)$ can be hit by no more than $|V| - \sn(G)$ vertices (namely those in $B$).  As any minimal hitting set of $\mathcal S$ is of size at least $\sn(G)$, and each additional egg not in $\mathcal S_A$ can only cause the hitting number to increase by at most 1, it follows there must be at least $\sn(G) - (|V| - \sn(G))$ eggs in $\mathcal S$ in addition to those of $\mathcal S_A$ already hittable by $B$.  Thus we have
\begin{align*}
|\mathcal{S}| &\geq |\mathcal S_A| + (\sn(G)- |B|)\\ 
&\geq \sn(G) + (\sn(G) - |B|)\\ 
&=2 \sn(G) - |B|\\ 
&\geq 2 \sn(G) - (|V| - \sn(G))\\
&= 3 \sn(G) - |V|
\end{align*}
completing the proof.
\end{proof}

Note that this bound may not be very good for many graphs: we have assumed that each egg aside from the eggs in $\mathcal S_A$ increases hitting number by one, which in practice may not be achievable! The most certain way to increase hitting number is to add eggs that are disjoint from all other eggs in the scramble, but depending on the specific graph and choice of $A$, there may be few, perhaps even zero, such eggs. Adding eggs that overlap with existing eggs may change the hitting set dramatically, making it difficult to find a stronger bound on how many eggs one must add to increase the hitting set to the desired size.

\begin{example}
    Consider the $4\times 4$ rook's graph $R_{4,4}$, which encodes the moves that a rook can make on a $4\times 4$ chess board.  This graph has $16$ vertices which we arrange in a $4\times 4$ grid, with every vertex connected to all other vertices in the same row or column.  Thus $|V(R_{4,4})|=16$ and $\Delta(R_{4,4})=6$. It was shown in \cite{rooks_gonality} that $\sn(R_{4,4})=11$.  Applying Theorem 1.1, we have that 
    \[\cart(R_{4,4})\geq 3\sn(R_{4,4})-|V(G)|=33-16=17.\]
    This is our first explicit example of a graph whose carton number is strictly larger than its number of vertices.
\end{example}

We now give a necessary condition for scrambles to require exponentially many eggs in the number of vertices. Our arguments closely follow those of Grohe and Marx in \cite{exponential_scramble_size}.

\begin{theorem} \label{exponential-carton-number}
    Let $G = (V, E)$ with $\Delta(G) = d$ and $n=|V(G)|$, and let $\mathcal{S}$ be a scramble on $G$ such that $||\scramble|| \geq \lceil c \cdot n^{1/2 + \epsilon} \rceil$ for some $c > 1$ and $\epsilon > 0$. Then, $|\mathcal{S}| \geq \text{exp}(c \cdot n^{\epsilon} / (d+1))$.
\end{theorem}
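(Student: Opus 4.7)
The plan is to adapt the probabilistic hitting-set argument of Grohe and Marx. Writing $k=\lceil c\cdot n^{1/2+\epsilon}\rceil$, the strategy has two steps: first, use the egg-cut hypothesis to force every egg of $\mathcal{S}$ to be large; second, apply random sampling to produce a hitting set of size less than $k$ whenever $|\mathcal{S}|$ is small, contradicting $h(\mathcal{S})\geq k$.

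For the first step, I would show that every egg $E\in\mathcal{S}$ satisfies $|E|\geq k/d$. Since $h(\mathcal{S})\geq k\geq 2$ forces at least two eggs, $e(\mathcal{S})$ is finite and at least $k$. Suppose toward a contradiction that some egg $E_0$ has $|E_0|<k/d$. Then fewer than $k$ edges cross from $E_0$ to $V\setminus E_0$, since this count is bounded by the sum of degrees in $E_0$, which is at most $d|E_0|<k$. If some other egg lies entirely in $V\setminus E_0$, those crossing edges form an egg-cut of size less than $k$, contradicting $e(\mathcal{S})\geq k$; otherwise every other egg meets $E_0$, making $E_0$ itself a hitting set of size less than $k$, contradicting $h(\mathcal{S})\geq k$.

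For the second step, I would sample $T\subseteq V$ by including each vertex independently with probability $p=k/(2n)$ and consider
\[X=|T|+k\cdot\#\{E\in\mathcal{S}:T\cap E=\emptyset\}.\]
Using $|E|\geq k/d$ and $(1-p)^{|E|}\leq e^{-pk/d}$, linearity of expectation gives $\mathbb{E}[X]\leq k/2+k|\mathcal{S}|\exp(-k^2/(2nd))$. If this is less than $k$, some outcome satisfies $X<k$; since the two summands are nonnegative and the second is a multiple of $k$, this forces $T$ to hit every egg and $|T|<k$, contradicting $h(\mathcal{S})\geq k$. Therefore $|\mathcal{S}|\geq\tfrac{1}{2}\exp(k^2/(2nd))\geq\tfrac{1}{2}\exp(c^2n^{2\epsilon}/(2d))$ after substituting $k\geq cn^{1/2+\epsilon}$. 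A direct algebraic check then confirms that this quantity exceeds $\exp(cn^{\epsilon}/(d+1))$ in the only regime where the hypothesis is nontrivial, namely $n$ sufficiently large relative to $c,\epsilon,d$ (note that the hypothesis $\|\mathcal{S}\|\leq n$ already forces $n$ to grow with $c$ and $\epsilon$).

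The main obstacle I foresee is the dichotomy in the first step: the edges leaving a small egg do not automatically form an egg-cut, since the opposite side might contain no egg of $\mathcal{S}$, and this case must be disposed of separately via the hitting-number bound rather than the egg-cut bound. Once every egg is known to have size at least $k/d$, the probabilistic step is routine and in fact yields a bound of the form $\exp(\Omega(n^{2\epsilon}/d))$, which is asymptotically stronger than the stated $\exp(cn^\epsilon/(d+1))$; the extra factor of $(d+1)$ in the denominator of the theorem is a cosmetic simplification absorbed by this slack.
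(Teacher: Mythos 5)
Your two-step structure is exactly the paper's: the dichotomy in your first step (a small egg either yields an egg-cut of size less than $\|\mathcal{S}\|$ against a disjoint egg, or is itself a too-small hitting set) is the same argument the paper uses to force all eggs to be large, and your second step is a standard variant of the paper's random-sampling argument (you use Bernoulli sampling plus an expectation/alteration count where the paper samples a fixed number of vertices uniformly and applies a union bound). Both implementations of the probabilistic step are sound, and your computation $|\mathcal{S}|\geq\tfrac{1}{2}\exp\bigl(k^2/(2nd)\bigr)\geq\tfrac{1}{2}\exp\bigl(c^2n^{2\epsilon}/(2d)\bigr)$ is correct.

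The gap is in your final sentence. Writing $x=cn^{\epsilon}$, your bound dominates the stated one exactly when $x^2/(2d)-x/(d+1)\geq\ln 2$, which fails for $x$ below roughly $\sqrt{2d\ln 2}$, and the hypothesis being satisfiable does \emph{not} force $x$ out of that range: take, say, $c=1.01$, $\epsilon=0.01$, $d=3$, $n=10^{6}$, where $x\approx 1.16$ and a $3$-regular expander on $10^6$ vertices certainly admits a scramble of order $\lceil cn^{0.51}\rceil\approx 1160$; there your bound gives $|\mathcal{S}|\geq 0.63$ while the theorem asserts $|\mathcal{S}|\geq e^{x/4}\approx 1.34$. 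So the "direct algebraic check for $n$ sufficiently large" silently converts the theorem into an asymptotic statement, which is not what is claimed. The fix is easy but must be made explicit: either add the trivial bound $|\mathcal{S}|\geq h(\mathcal{S})\geq\lceil cn^{1/2+\epsilon}\rceil$ and verify that the maximum of the two bounds always exceeds $\exp(cn^{\epsilon}/(d+1))$, or recalibrate as the paper does --- use only the weaker egg-size bound $|E|\geq cn^{1/2}/(d+1)$ (your dichotomy proves this a fortiori) and sample $\lceil n^{1/2+\epsilon}\rceil$ vertices uniformly at random, so that the miss probability per egg is at most $\exp(-cn^{\epsilon}/(d+1))$ and the union bound delivers the stated constant-free inequality directly. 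Your observation that the method really proves the stronger $\exp(\Omega(n^{2\epsilon}/d))$ asymptotically is correct and worth noting, but it does not by itself discharge the stated claim.
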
 
\begin{proof}
    Suppose $\mathcal{S}$ has a set $A$ of size at most $\frac{c \cdot n^{1/2}}{d+1}$. Then either
    \begin{enumerate} 
        \item There exists a set $A' \in \mathcal{S}$ such that $A' \cap A = \phi$. In this case, the egg-cut between $A$ and $A'$ is at most $c \cdot n^{1/2}$. Thus, $||\mathcal{S}|| \leq c \cdot n^{1/2}$, contradicting the assumption that $||\mathcal{S}|| \geq \lceil c \cdot n^{1/2 + \epsilon} \rceil$
        
        \item No such $A' \in \mathcal{S}$ exists. Then, $A$ is a hitting set of $\mathcal{S}$. By assumption, $|A| \leq \frac{c \cdot n^{1/2}}{d+1}$. Thus, $||\mathcal{S}|| \leq \frac{c \cdot n^{1/2}}{d+1}$, contradicting the assumption that $||\mathcal{S}|| \geq \lceil c \cdot n^{1/2 + \epsilon} \rceil$
    \end{enumerate}
    Therefore, we may assume that every $A \in \mathcal{S}$ has cardinality at least $\frac{c \cdot n^{1/2}}{d+1}$. We now bound the probability that a randomly selected set of vertices does not form a hitting set for $\scramble$. Let $\ell = \lceil n^{1/2 + \epsilon} \rceil$ and $v_1, \ldots , v_{\ell}$ be vertices chosen independently and uniformly at random. We define indicator random variables $X_i^A = \begin{cases} 1 \text{ if } v_i \in A \\
    0 \text{ otherwise} \end{cases}$ for all $i \in [\ell]$ and $A \in \scramble$. Then,
    \begin{align*}
        \Pr(X_i^A = 1) &= \frac{|V(A)|}{|V(G)|} \geq \frac{ \frac{c \cdot n^{1/2}}{d+1}}{n} = \frac{c}{n^{1/2} \cdot (d+1)}
    \end{align*}
    Furthermore,
    \begin{align*}
        \Pr \left( \sum_{i=1}^{l} X_i^A = 0 \right) &= \Pi_{i=1}^{\ell} \left( 1-\Pr(X_i^A = 1) \right) \leq \left( 1 - \frac{c \cdot n^{-1/2}}{d+1} \right)^{\ell} \\
        &\leq \exp\left(-\frac{c \cdot n^{-1/2}}{d+1} \cdot l \right) = \exp \left(-\frac{c \cdot n^{\epsilon}}{d+1} \right)
    \end{align*}
    Lastly, 
    \begin{align*}
        \Pr\left( \{v_1, \ldots , v_l\} \text{ not hitting set for } \scramble \right) &= \Pr \left( \exists A \in \scramble. \sum_{i=1}^{\ell} X_i^A = 0 \right) \\
        &\leq \sum_{A \in \scramble} \Pr \left( \sum_{i=1}^{l} X_i^A = 0 \right) \\
        &= |\scramble| \cdot \exp \left( -\frac{c \cdot n^{\epsilon}}{d+1} \right) 
    \end{align*}
    Because $\ell < \|\scramble\| \leq h(\scramble)$, it follows that no set of $\ell$ vertices can ever hit $\scramble$. Therefore, \\ $\Pr\left( \{v_1, \ldots , v_l\} \text{ not hitting set for } \scramble \right) = 1$ and 
    \begin{align*}
        |\scramble| \geq \exp \left( \frac{c \cdot n^{\epsilon}}{d+1} \right) 
    \end{align*}
\end{proof}

Before we apply this result to prove Theorem \ref{expander-graphs}, we present a quick corollary of this result.

\begin{corollary}\label{corollary:minor_closed_bounded_deg}
Graphs $G$ from graph families with bounded maximum degree and only polynomially many connected subgraphs have $\sn(G)=O(\sqrt{n})$.
\end{corollary}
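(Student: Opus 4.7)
The approach is to apply the contrapositive of Theorem~\ref{exponential-carton-number}. Let $d$ bound the maximum degree across the family and let $P(n)$ be a polynomial bounding the number of connected subgraphs of any member graph on $n$ vertices. For any scramble $\mathcal{S}^{*}$ on such a graph $G$, every egg is a connected subgraph of $G$, and repeating an egg contributes nothing to either the hitting number or the egg-cut number, so we may assume the eggs of $\mathcal{S}^{*}$ are distinct; hence $|\mathcal{S}^{*}|\leq P(n)$.

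Now fix constants $c>1$ and $\epsilon>0$, and suppose toward contradiction that $\sn(G)\geq \lceil c\cdot n^{1/2+\epsilon}\rceil$ for arbitrarily large $n$. Choosing $\mathcal{S}^{*}$ to be a maximum-order scramble on such a $G$ and applying Theorem~\ref{exponential-carton-number} yields $|\mathcal{S}^{*}|\geq \exp(c\cdot n^{\epsilon}/(d+1))$. Combining this with the upper bound $|\mathcal{S}^{*}|\leq P(n)$ forces $P(n)\geq \exp(c\cdot n^{\epsilon}/(d+1))$, which fails for all sufficiently large $n$ since the right-hand side grows super-polynomially whenever $\epsilon>0$. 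Hence $\sn(G)<c\cdot n^{1/2+\epsilon}$ eventually; as $c$ and $\epsilon$ were arbitrary, we obtain $\sn(G)=O(n^{1/2+\epsilon})$ for every $\epsilon>0$, which the corollary abbreviates as $O(\sqrt{n})$.

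The only delicate point is the mismatch between ``$O(n^{1/2+\epsilon})$ for every fixed $\epsilon>0$'' and a strict $O(\sqrt{n})$ bound: as stated, Theorem~\ref{exponential-carton-number} only controls scramble orders above the polynomial threshold $n^{1/2+\epsilon}$ for constant $\epsilon$. Letting $\epsilon$ drift down with $n$ (for instance $\epsilon\asymp\log\log n/\log n$) upgrades the conclusion to something like $\sn(G)=O(\sqrt{n}\log n)$, while a constant-factor $O(\sqrt{n})$ bound would instead require rerunning the probabilistic argument inside Theorem~\ref{exponential-carton-number} with sample size $\ell$ tuned directly to roughly $\sn(G)/2$ rather than the off-the-shelf $\ell=\lceil n^{1/2+\epsilon}\rceil$. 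Apart from this bookkeeping about polylogarithmic slack, the argument is a direct substitution: polynomial egg-budget meets exponential egg-requirement.
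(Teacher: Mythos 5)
Your argument is exactly the paper's: the paper's proof is the two-line observation that every egg is a connected subgraph, so $|\mathcal{S}|$ is polynomially bounded, and then invokes the contrapositive of Theorem~\ref{exponential-carton-number}. Your closing caveat is a correct criticism that applies equally to the paper's own proof: the contrapositive literally yields only $\sn(G)=O(n^{1/2+\epsilon})$ for each fixed $\epsilon>0$, and letting $\epsilon$ decay with $n$ gives $O(\sqrt{n}\log n)$, not $O(\sqrt{n})$ --- a slack the paper silently elides. One further note: even re-running the sampling argument with $\ell$ tuned to roughly $\sn(G)$ does not recover a clean $O(\sqrt{n})$ bound, since the union bound over the $P(n)$-many eggs forces $\exp(-\sn(G)\,\ell/((d+1)n))<1/P(n)$ and hence costs an extra $\sqrt{\log n}$ factor, giving $O(\sqrt{n\log n})$ at best by this method.
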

\begin{proof}
Any scramble $\mathcal{S}$ on such a graph has polynomially many eggs, since each egg is a connected subgraph.  By the contrapositive of the previous theorem, its order must be $O(\sqrt{n})$
\end{proof}
It is natural to then ask then which graph families satisfy the property of having only polynomially many connected subgraphs.  For minor closed families, this is discussed in \cite{poly-eggs}, where an argument is presented that these are precisely the minor-closed graph families that exclude at least one star graph $K_{1,m}$.  These families will eventually be subsumed in our Corollary \ref{corollary:no_kr_minor_sn_sqrt}, so we do not go into further detail here.


We now show that a family of $d$-regular graphs considered by Grohe and Marx in \cite{exponential_scramble_size} has carton number exponential in $\sqrt{n}$. We first recall the relevant definitions and theorems here. The \emph{vertex expansion of $G$ with parameter $\alpha \in [0,1]$} is defined to be 
    \begin{align*}
        \mathrm{vx}_{\alpha}(G) = \min_{\substack{X \subseteq V(G) \\ 0 < |X| \leq \alpha \cdot |V(G)|} } \frac{|S(X)|}{|X|},
    \end{align*}
 where $S(X)$ is the neighborhood of $X$, i.e. the set of vertices in $V\setminus X$ adjacent to at least one vertex in $X$. 

\begin{proposition} \label{tw-vx}
    \cite[Proposition 1]{exponential_scramble_size} Let $n \geq 1$ and $0 \leq \alpha \leq 1$. Then for every $n$-vertex graph $G$ we have 
    \begin{align*}
        \tw(G) \geq \lfloor \mathrm{vx}_{\alpha}(G) \cdot (\alpha / 2) \cdot n \rfloor.
    \end{align*}
\end{proposition}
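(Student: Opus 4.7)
The plan is to derive the bound from the balanced separator characterization of treewidth: whenever $\tw(G)\le k$, for every nonnegative vertex weighting $w$ there exists a set $S\subseteq V(G)$ with $|S|\le k+1$ such that every connected component of $G-S$ has total weight at most $w(V)/2$. I would apply this with uniform weights, obtaining a balanced separator $S$ of size at most $\tw(G)+1$ such that every component of $G-S$ has at most $n/2$ vertices.

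Next, I would produce a vertex set $X\subseteq V(G)\setminus S$ which is a union of components of $G-S$ and satisfies $\alpha n/2 < |X|\le \alpha n$. The construction is by greedy accumulation: order the components of $G-S$ by size and add them one at a time, stopping just before $|X|$ would exceed $\alpha n$. In the main regime, each component has size at most $\alpha n/2$ (this is automatic when $\alpha \ge 1/2$, since components are already at most $n/2$), so halting just below $\alpha n$ yields $|X|>\alpha n - \alpha n/2 = \alpha n/2$. If a single component already has size in the window $[\alpha n/2,\alpha n]$, take that component alone. The degenerate case in which some component $C$ exceeds $\alpha n$ forces $\alpha<1/2$ and is handled by instead taking $X$ to be a union of the remaining smaller components of $G-S$, or by the direct observation that $|S|$ is already at least the desired bound in that regime.

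Once $X$ is fixed, being a union of components of $G-S$ means its external neighborhood $S(X)$ lies entirely inside $S$. Applying the definition of $\mathrm{vx}_\alpha(G)$ to $X$, using $|X|\le\alpha n$, gives
\[
|S|\;\ge\;|S(X)|\;\ge\;\mathrm{vx}_\alpha(G)\cdot|X|\;>\;\mathrm{vx}_\alpha(G)\cdot\tfrac{\alpha n}{2}.
\]
Combining this with $|S|\le\tw(G)+1$, the strict inequality above, and the integrality of $|S|$ collapses the $+1$ slack and yields $\tw(G)\ge\lfloor\mathrm{vx}_\alpha(G)\cdot(\alpha/2)\cdot n\rfloor$.

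The main obstacle I anticipate is the careful case analysis in building $X$: ensuring that $X$ simultaneously satisfies $|X|\le\alpha n$ and $|X|>\alpha n/2$ in every regime of $\alpha$, and in particular handling a component whose size exceeds $\alpha n$. A secondary technical point is shepherding the off-by-one gap between $|S|$ and $\tw(G)+1$ through the strict inequality so that the floor in the conclusion is matched exactly rather than lost.
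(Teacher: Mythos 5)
The paper itself offers no proof of this proposition --- it is quoted verbatim from Grohe and Marx --- so I am judging your argument on its own terms. Your skeleton is the right one (and is the one used in the cited source): a balanced separator $S$ with $|S|\le\tw(G)+1$, a union $X$ of components of $G-S$ with $\alpha n/2<|X|\le\alpha n$, and the chain $\tw(G)+1\ge|S|\ge|S(X)|\ge\mathrm{vx}_{\alpha}(G)\cdot|X|$; your handling of the floor via strict inequality and integrality of $|S|$ is also correct. The genuine gap is the case you dismiss as degenerate: a component of $G-S$ with more than $\alpha n$ vertices. This is not a fringe case here --- in the only application of this proposition (Proposition \ref{d-reg-vx} feeding into Theorem \ref{expander-graphs}), $\alpha$ must be taken small, since $\mathrm{vx}_{\alpha}(G)\le d$ holds trivially for $d$-regular graphs and one needs $\mathrm{vx}_{\alpha}(G)\ge d-1-\epsilon$; so $\alpha<1/2$ is exactly the regime that matters. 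Neither fallback you offer closes it. Take $G$ to be two copies of $K_{n/2}$ joined by a single bridge and $\alpha=1/4$: one endpoint of the bridge is a valid balanced separator with $|S|=1$, both components of $G-S$ have about $n/2>\alpha n$ vertices, there are no ``remaining smaller components'' to accumulate, and $|S|=1$ is nowhere near $\mathrm{vx}_{1/4}(G)\cdot\frac{n}{8}=\frac{n}{8}$. The proposition still holds for this graph, but only because $\mathrm{vx}_{1/4}(G)=1$ as witnessed by a set of $n/4$ vertices \emph{inside} one clique --- a set that is not a union of components of $G-S$ and that your construction can never produce. Repairing this requires descending further into the tree decomposition (equivalently, recursing the separator argument inside the oversized component) to locate a small-boundary set of the right size within a single component; one application of the balanced-separator lemma does not suffice.

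A smaller slip: you claim that when $\alpha\ge1/2$ every component automatically has size at most $\alpha n/2$. The separator lemma only gives size at most $n/2\le\alpha n$, so a component may land in the window $(\alpha n/2,\alpha n]$; this is rescued by your ``take that component alone'' clause, so it is harmless, but the stated justification is incorrect.
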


\begin{proposition} \label{d-reg-vx}
    \cite{expander-graph-applications} Let $d \geq 3$. Then, for every $\epsilon > 0$ there exists an $\alpha > 0$ and a family $(G_n)_{n \geq 1}$ of $d$-regular graphs for which 
    \begin{align*}
        \mathrm{vx}_{\alpha}(G) \geq d - 1 -\epsilon.
    \end{align*}
\end{proposition}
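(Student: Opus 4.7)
The plan is to establish this classical expander existence statement via the probabilistic method on random $d$-regular graphs. I would work in the configuration model: each of the $n$ vertices is given $d$ half-edges, and the $dn$ half-edges are paired uniformly at random; the correction for loops and multi-edges is standard and does not affect the exponential bounds on expansion.

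Fix $d \geq 3$ and $\epsilon > 0$. For each pair of disjoint vertex subsets $X, Y \subseteq V$ with $|X| = k \leq \alpha n$ and $|Y| = s \leq \lfloor (d-1-\epsilon) k \rfloor$, I would bound the probability that every half-edge out of $X$ is paired with a half-edge in $X \cup Y$, so that $S(X) \subseteq Y$. A routine calculation in the configuration model gives this probability as a matching ratio that, for $k+s \ll n$, is of order $((k+s)/n)^{dk}$. Taking a union bound over all such pairs $(X, Y)$, the expected number of sets $X$ of size $k$ with $|S(X)| < (d-1-\epsilon) k$ is bounded above by
\begin{align*}
\binom{n}{k} \binom{n}{s} \left( \frac{k+s}{n} \right)^{dk}.
\end{align*}
Using $\binom{n}{m} \leq (en/m)^m$ and writing $x = k/n$, the expression collapses to $\bigl[C(d, \epsilon) \cdot x^{\epsilon}\bigr]^{k}$ for a constant $C(d,\epsilon)$ depending only on $d$ and $\epsilon$. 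Summing over $1 \leq k \leq \alpha n$, the total expectation is dominated by a geometric series that is strictly less than $1$ once $\alpha > 0$ is chosen so that $C(d,\epsilon) \cdot \alpha^{\epsilon} < \tfrac{1}{2}$. Hence a positive fraction of graphs in the model satisfy $\mathrm{vx}_{\alpha}(G) \geq d - 1 - \epsilon$, and extracting any infinite sub-family yields the desired $(G_n)_{n \geq 1}$.

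The main obstacle is calibrating $\alpha$ as a function of $\epsilon$ and $d$, since the binomial factors contribute entropy that must be beaten by the probabilistic savings $((k+s)/n)^{dk}$. The crucial arithmetic is that when $s = (d-1-\epsilon)k$, the exponent $dk$ exceeds $k + s$ by exactly $\epsilon k$, leaving a residual factor of $x^{\epsilon k}$ after the entropy has been absorbed; this slack is precisely what makes the union bound close, and also explains why the threshold cannot be pushed past $d-1$. A cleaner but less self-contained alternative would be to invoke explicit Ramanujan-type constructions (e.g.\ Lubotzky--Phillips--Sarnak graphs) together with Kahale's vertex-expansion analysis, which also gives expansion approaching $d-1$ on linear-sized subsets; the probabilistic route has the advantage of handling every $\epsilon > 0$ uniformly in one shot.
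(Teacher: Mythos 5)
The paper offers no proof of this proposition --- it is imported verbatim from the Hoory--Linial survey --- so your reconstruction has to stand on its own, and it has a genuine gap at the central probability estimate. The event ``every half-edge of $X$ is paired within $X\cup Y$'' does not have probability $O\bigl(((k+s)/n)^{dk}\bigr)$: a pairing \emph{internal} to $X$ consumes two of the $dk$ half-edges of $X$ while contributing only one factor of order $(k+s)/n$, and the event includes configurations with up to $dk/2$ internal pairs. Splitting on the number $j$ of internal pairs, the $j$-th term is of order $(s/n)^{dk}\cdot\frac{1}{j!}\bigl(\Theta(n)\bigr)^{j}$ (the $\binom{dk}{2j}(2j-1)!!$ ways to choose the internal pairing overwhelm the probabilistic saving), so the sum is dominated by $j\approx dk/2$ and the true bound is roughly $(Ck/n)^{dk/2}$. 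With exponent $dk/2$ the union bound would need $dk/2 > k+s=(d-\epsilon)k$, i.e.\ $d<2\epsilon$, so it never closes; the ``residual factor of $x^{\epsilon k}$'' you rely on is exactly what is destroyed.

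This is not a repairable technicality for the statement as written, because with $S(X)$ defined (as in this paper) to be the \emph{external} neighborhood, the conclusion is false: if $X$ is the vertex set of a path on $k$ vertices, at least $2(k-1)$ of its $dk$ half-edges are internal, so $|S(X)|\le (d-2)k+2<(d-1-\epsilon)k$ whenever $k>2/(1-\epsilon)$, and connected $d$-regular graphs always contain paths on at least $d+1\ge 4$ vertices (so for $\epsilon<1/2$ no such family exists once $\alpha n\ge d+1$). The cited theorem concerns $\Gamma(X)$, the set of \emph{all} neighbors of $X$, which may meet $X$; for a path $|\Gamma(X)|=(d-1)k+2$, consistent with $d-1-\epsilon$. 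Its proof also does not union-bound over containing sets $Y$: it shows that $|\Gamma(X)|\le(d-1-\epsilon)k$ forces at least $(1+\epsilon)k$ coincidences among the $dk$ neighbor slots of $X$, each coincidence costing a factor $O(k/n)$, and $\binom{n}{k}\cdot(Ck/n)^{(1+\epsilon)k}\to 0$ for $k\le\alpha n$ with $\alpha$ small. If you insist on external neighborhoods you must settle for the constant $d-2-\epsilon$, which is still positive for $d\ge 3$ and is all that the application in Theorem \ref{expander-graphs} requires. (Your fallback via Kahale's analysis of Ramanujan graphs likewise yields vertex expansion only about $d/2$, not $d-1$.)
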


These results directly lead to Theorem \ref{expander-graphs}.

\begin{proof}[Proof of Theorem \ref{expander-graphs}]
    Combined with Proposition \ref{tw-vx}, Proposition \ref{d-reg-vx} shows that for every $\epsilon > 0$, there exists an $\alpha > 0$ and a family $(G_n)_{n \geq 1}$ of $d$-regular graphs for which $\tw(G) = \lfloor (d-1-\epsilon) \cdot (\alpha/2) \cdot n \rfloor = \Omega(n)$. Because $\tw(G) \leq \sn(G)$, this same family has $\sn(G) = \Omega(n)$. Thus, every maximum order scramble $\scramble$ of $G$ must have $||\scramble|| = \Omega(n)$. Theorem \ref{exponential-carton-number} then shows the second claim. 
\end{proof}

This result implies that carton number can increase dramatically when taking a subgraph.  Consider a complete graph $H$ that is formed by adding missing edges to such an expander graph $G$ on $n$ vertices. $H$ has carton number $n-1$ whereas $G$ has carton number $2^{\Omega(\sqrt{n})}$. Because such a $G$ exists for all $n \geq 1$, the gap in carton numbers of $H$ and subgraph $G$ can grow arbitrarily large. \\ 

We close this section by showing that there exist efficient algorithms to find large scrambles on graphs.

\begin{proposition}
    Let $k \geq 2$ and let $G$ be a graph with of treewidth greater than $3k$. Then $G$ has a scramble of order $\Omega(\frac{\sqrt{k}}{\ln(k)})$ and size $O(k^{3/2} \ln(n))$.  
\end{proposition}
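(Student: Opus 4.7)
The plan is to leverage a polynomial-size bramble construction from the literature on graphs of high treewidth, and then convert the resulting bramble into a scramble using Proposition \ref{scramble-order-of-brambles}.

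First, I would invoke a theorem of Grohe and Marx (the companion positive result to their exponential lower bound used in the proof of Theorem \ref{exponential-carton-number}), which asserts that every graph $G$ with $\tw(G) > 3k$ admits a bramble $\mathcal{B}$ of order $\Omega(\sqrt{k}/\log k)$ using only $O(k^{3/2} \log n)$ bags. Informally, the construction extracts from $G$ a well-linked set of size $\Omega(k)$---the factor of $3$ in the treewidth hypothesis providing the slack needed to guarantee that such a well-linked set exists---and then assembles the bramble from a polynomially-sized collection of shortest paths between pairs of vertices in this set. Pairs of bags intersect or are joined by an edge by virtue of the well-linked structure, while a random sampling argument keeps the total number of bags only logarithmic in $n$.

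Next, I would apply Proposition \ref{scramble-order-of-brambles}, which guarantees that any bramble of order $m$ is simultaneously a scramble of order at least $m-1$. Applied to the bramble $\mathcal{B}$ above, this produces a scramble whose eggs are the bags of $\mathcal{B}$, so it has order $\Omega(\sqrt{k}/\log k)$ and size $O(k^{3/2} \log n)$, as required. Since the asymptotic notation absorbs the additive loss of $1$, no further cleanup is needed on the order.

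The most delicate step is verifying that the bramble construction delivers a bramble of size matching the claimed $O(k^{3/2} \log n)$ rather than some other polynomial in $n$ and $k$. This amounts to carefully tuning the number of shortest paths sampled and the failure probability in the underlying probabilistic argument so that a union bound over pairs of candidate bags yields exactly the stated dependencies. Once a bramble of the claimed size and order is in hand, the conversion to a scramble via Proposition \ref{scramble-order-of-brambles} is essentially bookkeeping.
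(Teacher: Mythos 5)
Your proposal matches the paper's proof exactly: both invoke the Grohe--Marx bramble construction (their Lemma 14, which produces a bramble of order $\Omega(\sqrt{k}/\ln k)$ and size $O(k^{3/2}\ln n)$ when $\tw(G) > 3k$) and then convert that bramble into a scramble of essentially the same order via Proposition \ref{scramble-order-of-brambles}. The additional detail you give about the internal workings of the Grohe--Marx construction is not needed, since the paper simply cites the result as a black box.
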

\begin{proof}
    The analogous result for brambles is proved in  \cite[Lemma 14]{exponential_scramble_size}. Taking such a bramble $\mathcal B$, Proposition \ref{scramble-order-of-brambles} then implies that $\mathcal{B}$ is a scramble of order $\Omega(\frac{\sqrt{k}}{\ln(k)})$ and size $O(k^{3/2} \ln(n))$.
\end{proof}

\begin{proposition}
    There exists a randomized polynomial time algorithm that, given a graph $G$ of treewidth $k$, constructs with high probability a scramble in $G$ of order $\Omega(\frac{\sqrt{k}}{\ln^3(k)})$ and size $O(k^{3/2} \ln k \ln n).$
\end{proposition}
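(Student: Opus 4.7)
The plan is to mirror the proof of the preceding (deterministic, existential) proposition, replacing the existence-of-a-bramble input with a randomized polynomial-time construction of a bramble. In the existential version, one invoked \cite[Lemma 14]{exponential_scramble_size} to obtain a bramble $\mathcal{B}$ of order $\Omega(\sqrt{k}/\ln k)$ and size $O(k^{3/2}\ln n)$, and then applied Proposition \ref{scramble-order-of-brambles} to pass from bramble order to scramble order with loss of at most $1$. For the randomized version I would use the corresponding algorithmic result from Grohe--Marx, which yields a randomized polynomial-time procedure that, on input a graph $G$ of treewidth $k$, outputs with high probability a bramble of order $\Omega(\sqrt{k}/\ln^{3} k)$ and size $O(k^{3/2}\ln k\ln n)$. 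The slightly worse polylogarithmic factors compared to the existential statement are exactly the overhead one pays to certify correctness of the random construction with high probability.

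Concretely, the steps I would carry out are as follows. First I would invoke the randomized algorithm from \cite{exponential_scramble_size} that, with high probability, returns a bramble $\mathcal{B}$ of the prescribed order $\Omega(\sqrt{k}/\ln^{3} k)$ and size $O(k^{3/2}\ln k\ln n)$ on $G$. Second, I would observe that a bramble is by definition a family of pairwise-touching connected subgraphs, and so is in particular a scramble; by Proposition \ref{scramble-order-of-brambles}, its order as a scramble differs from its order as a bramble by at most one. Since the bramble order is $\Omega(\sqrt{k}/\ln^{3} k)$, subtracting one preserves the same asymptotic lower bound, so the scramble order is still $\Omega(\sqrt{k}/\ln^{3} k)$. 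The number of eggs of the resulting scramble equals $|\mathcal{B}|=O(k^{3/2}\ln k\ln n)$, which gives the claimed size bound.

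Finally, I would note that the algorithm runs in randomized polynomial time: the bramble-constructing routine of Grohe--Marx is already polynomial, and passing from the bramble to a scramble is free (the data structure is identical), as is the computation needed to verify the claimed scramble order (the egg-cut number can be computed in polynomial time since there are polynomially many eggs, following the observation used earlier in the paper).

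The only nontrivial step is the appeal to the randomized bramble-construction result from \cite{exponential_scramble_size}; everything else is a direct translation via Proposition \ref{scramble-order-of-brambles}. Thus the main obstacle, if any, is merely to cite the appropriate algorithmic lemma from \cite{exponential_scramble_size} (the randomized analogue of the \cite[Lemma 14]{exponential_scramble_size} used in the previous proposition) and verify that its guarantees match the stated asymptotics; no new combinatorial argument on scrambles is required.
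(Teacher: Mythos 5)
Your proof follows essentially the same route as the paper: invoke the randomized polynomial-time bramble construction with the stated order and size guarantees, then apply Proposition \ref{scramble-order-of-brambles} to conclude the bramble is a scramble of order at most one less, which preserves the asymptotics. The only correction is that the algorithmic result you need is \cite[Theorem 3.1]{scramble_construction} (Kreutzer--Tazari), not a lemma of Grohe--Marx in \cite{exponential_scramble_size}.
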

\begin{proof}
     The analogous result for brambles is proved in \cite[Theorem 3.1]{scramble_construction}. Taking such a bramble $\mathcal B$, Proposition \ref{scramble-order-of-brambles} then implies that $\mathcal{B}$ is a scramble of order $\Omega(\frac{\sqrt{k}}{\ln^3(k)})$ and size $O(k^{3/2} \ln k \ln n)$. Therefore, the same algorithm shows the claim.
\end{proof}

\section{Properties of disjoint scramble number} \label{properties-disjoint-scramble-number}  In this section we focus on properties of disjoint scramble number and its relationship to other graph invariants, with the eventual goal of using it to compute carton numbers via Theorem \ref{cart=sn}.   

\begin{proposition}
 Disjoint scramble number is subgraph monotone, and invariant under subdivision and smoothing.
\end{proposition}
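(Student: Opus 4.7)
The plan is to prove each of the three assertions in turn, obtaining smoothing invariance as an immediate consequence of subdivision invariance.

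For \emph{subgraph monotonicity}, I would start with a disjoint scramble $\mathcal{S}$ on a subgraph $H\subseteq G$ realizing $\dsn(H)$ and view it as a collection of pairwise disjoint connected subgraphs of $G$. The hitting number equals $|\mathcal{S}|$ in either ambient graph. For the egg-cut number, I plan to argue that any egg-cut $F$ of $\mathcal{S}$ in $G$ restricts to an egg-cut $F\cap E(H)$ of $\mathcal{S}$ in $H$ of no larger size: a path in $H-(F\cap E(H))$ joining two eggs would also be a path in $G-F$, contradicting that $F$ separates them. This gives $e_G(\mathcal{S})\geq e_H(\mathcal{S})$, hence the order of $\mathcal{S}$ viewed in $G$ is at least its order in $H$, so $\dsn(G)\geq\dsn(H)$.

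For \emph{subdivision invariance}, I would reduce by induction to the case of subdividing a single edge $(u,w)$ by a new vertex $v$ to produce $G'$. The inequality $\dsn(G)\leq\dsn(G')$ follows by taking a max-order disjoint scramble $\mathcal{S}$ on $G$ and applying Proposition \ref{prop:specific_scrambles_subdivision}(i): the resulting scramble $\mathcal{S}'$ on $G'$ has order at least $\|\mathcal{S}\|$ and remains disjoint because $v$ is added to at most one egg (the one containing $u$, if any). For the reverse inequality, I would take a max-order disjoint scramble $\mathcal{S}'$ on $G'$ and apply Proposition \ref{prop:specific_scrambles_subdivision}(ii) to delete $v$ from whichever egg contains it; disjointness is preserved since $v$ lies in at most one egg of $\mathcal{S}'$.

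The principal obstacle is that Proposition \ref{prop:specific_scrambles_subdivision}(ii) requires the scramble order to be at least $3$. I expect to handle the low-order regime via Proposition \ref{dsn=sn}: if $\dsn(G')\leq 2$, then $\sn(G')=\dsn(G')\leq 2$, so by Proposition \ref{prop:sn_invariant_subdivision} also $\sn(G)=\sn(G')\leq 2$, and a second application of Proposition \ref{dsn=sn} yields $\dsn(G)=\sn(G)=\sn(G')=\dsn(G')$, collapsing the low-order case to the already-known invariance of scramble number. Finally, \emph{smoothing invariance} is immediate: if $G'$ is a smoothing of $G$, then $G$ is a subdivision of $G'$, so $\dsn(G)=\dsn(G')$ by the subdivision invariance just established.
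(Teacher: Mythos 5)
Your proof is correct and follows essentially the same route as the paper's: subgraph monotonicity by viewing the disjoint scramble in the larger graph (where the paper cites an earlier result for the order comparison, you supply the cut-restriction argument directly), and subdivision invariance via Proposition \ref{prop:specific_scrambles_subdivision}(i) and (ii) with the low-order case dispatched through Proposition \ref{dsn=sn} and the known subdivision-invariance of scramble number. Deducing smoothing invariance as the inverse of subdivision also matches the paper.
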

\begin{proof} 
If $G'$ is a subgraph of $G$, then any disjoint scramble $\mathcal{S}$ on $G'$ is also a disjoint scramble on $G$.  As argued in the proof of \cite[Proposition 4.5]{new_lower_bound}, the order of $\mathcal{S}$ on $G$ is at least as large as the order of $\mathcal{S}$ on $G'$.  Choosing $\mathcal{S}$ disjoint of maximal order on $G'$ gives $\dsn(G)\geq \dsn(G')$.

We will argue that disjoint scramble number is invariant under subdivision; it then follows that it is invariant under smoothing.  By Proposition \ref{dsn=sn}, if $G'$ is a subdivision of $G$ and $\dsn(G)\leq 2$ or $\dsn(G')\leq 2$ we have $\dsn(G)=\sn(G)=\sn(G')=\dsn(G')$ by a combination of \cite[Proposition 5.2]{sn_two} and Corollary \ref{cart=sn=dsn}; thus for the remainder we may take $\dsn(G)\geq 3$ and $\dsn(G')\geq 3$.  It suffices by induction to consider the case that a subdivision $G'$ of $G$ is obtained by subdividing a single edge between adjacent vertices $u$ and $w$, introducing a new vertex $v$ between them.

 We first show $\dsn(G')\geq \dsn(G)$.  Let $\mathcal{S}$ be a disjoint scramble of order $\dsn(G)$ on $G$. Construct the scramble $\mathcal{S}'$ on $G'$ from Proposition \ref{prop:specific_scrambles_subdivision}(i). The construction of $\mathcal{S}'$ ensures it is still disjoint, and $\dsn(G')\geq ||\mathcal{S}'||\geq ||\mathcal{S}||=\dsn(G)$.

We now show $\dsn(G)\geq \dsn(G')$.  Let $\mathcal{S}'$ be a disjoint scramble of order $\dsn(G')\geq 3$ on $G'$. Construct the scramble $\mathcal{S}$ on $G$ from Proposition \ref{prop:specific_scrambles_subdivision}(i). The construction of $\mathcal{S}$ ensures it is still disjoint, and $\dsn(G)\geq ||\mathcal{S}||\geq ||\mathcal{S}'||=\dsn(G')$.

We conclude that $\dsn(G)=\dsn(G')$, as desired.
\end{proof}

\begin{proposition}
    Let $G$ be a $k$-connected graph. Then, $\dsn(G) \geq k$.
\end{proposition}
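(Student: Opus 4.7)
The plan is to exhibit an explicit disjoint scramble of order at least $k$ on $G$, namely the $1$-uniform scramble $\varepsilon_1$ (the verteggs), whose eggs are the singletons $\{v\}$ for $v\in V(G)$. This is trivially disjoint, so it suffices to show that $\|\varepsilon_1\|\geq k$, i.e.\ that both the hitting number and the egg-cut number are at least $k$.

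For the hitting number, I would observe that any hitting set of $\varepsilon_1$ must contain every vertex, so $h(\varepsilon_1)=|V(G)|$. Since any $k$-connected graph has at least $k+1$ vertices (as $K_{k+1}$ is the smallest $k$-connected graph), we get $h(\varepsilon_1)\geq k+1>k$. For the egg-cut number, I would note that because every vertex is its own egg, any edge set $W\subseteq E(G)$ whose removal disconnects $G$ is automatically an egg-cut of $\varepsilon_1$ (both components necessarily contain an egg). Consequently $e(\varepsilon_1)=\lambda(G)$, the edge connectivity. Using the standard chain $\lambda(G)\geq \kappa(G)\geq k$, this yields $e(\varepsilon_1)\geq k$.

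Combining the two bounds gives $\|\varepsilon_1\|=\min\{h(\varepsilon_1),e(\varepsilon_1)\}\geq k$, and since $\varepsilon_1$ is a disjoint scramble we conclude $\dsn(G)\geq \|\varepsilon_1\|\geq k$. There is no real obstacle here; the only thing to be slightly careful about is the egg-cut identification, which uses the fact that the singleton eggs automatically witness any disconnection, and the mild observation that $k$-connectivity forces $|V(G)|\geq k+1$ so that $h(\varepsilon_1)$ is not a binding constraint.
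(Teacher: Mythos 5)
Your proof is correct and follows essentially the same route as the paper: both use the vertegg scramble, bound its hitting number by $|V(G)|$ and its egg-cut number by $\lambda(G)\geq\kappa(G)\geq k$, and conclude $\dsn(G)\geq k$. The only (harmless) difference is that you note $|V(G)|\geq k+1$ where the paper uses $|V(G)|\geq k$; either suffices.
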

\begin{proof}
    Consider the \textit{vertegg scramble} $\scramble$, consisting of all one-vertex eggs. Note that $||\scramble|| = \min(|V(G)|, \lambda(G) )$. Since $G$ is $k$-connected, it follows that $|V(G)| \geq k$ and that $\lambda(G) \geq k$ (since $\kappa(G) \leq \lambda(G)$). Therefore, $\dsn(G) \geq ||\scramble|| = \min(|V(G)|, \lambda(G)) \geq k$. 
\end{proof}

\begin{proposition} 
    Let $G$ be in a graph class for which the maximum degree grows as $d(n)$. Then, $\dsn(G) = O(\sqrt{n \cdot d(n)})$.
\end{proposition}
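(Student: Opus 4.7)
The plan is a pigeonhole argument on the smallest egg of a disjoint scramble realizing $\dsn(G)$. Let $\mathcal{S}$ be a disjoint scramble on $G$ with $\|\mathcal{S}\| = k := \dsn(G)$. Because $\mathcal{S}$ is disjoint, $h(\mathcal{S}) = |\mathcal{S}|$, so $|\mathcal{S}| \geq k$; the bound is trivial when $k \leq 1$, so I may assume $|\mathcal{S}| \geq 2$, which forces $e(\mathcal{S})$ to be finite and at least $k$.

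Since at least $k$ pairwise vertex-disjoint vertex sets $V(E)$ (one per egg $E \in \mathcal{S}$) fit inside the $n$-vertex set $V(G)$, pigeonhole produces an egg $E_0 \in \mathcal{S}$ with $|V(E_0)| \leq n/k$. Each vertex of $E_0$ has degree at most $\Delta(G) = d(n)$ in $G$, so the edge boundary $\partial V(E_0)$ of $V(E_0)$ in $G$ has cardinality at most $(n/k)\cdot d(n)$.

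To finish, I will turn this edge boundary into a bona fide egg-cut. Fix any second egg $E_1 \in \mathcal{S}$ and take a minimum $E_0$-$E_1$ edge cut $F$. A standard exchange argument shows that $G - F$ has exactly two connected components, one containing $E_0$ and one containing $E_1$: were there a third component, some edge of $F$ incident to it could be removed while still separating $E_0$ from $E_1$, contradicting the minimality of $F$. Thus $F$ qualifies as an egg-cut of $\mathcal{S}$, and since $\partial V(E_0)$ itself separates $E_0$ from $E_1$, we get $|F| \leq |\partial V(E_0)| \leq (n/k) \cdot d(n)$. Combining with $k \leq e(\mathcal{S}) \leq |F|$ yields $k^{2} \leq n \cdot d(n)$, whence $\dsn(G) = O(\sqrt{n \cdot d(n)})$.

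The only delicate point is the verification that a minimum $E_0$-$E_1$ edge cut produces exactly two connected components in $G - F$, as literally required by the paper's egg-cut definition; once that is in hand, the rest of the proof is a direct pigeonhole calculation.
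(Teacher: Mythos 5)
Your proof is correct and follows essentially the same route as the paper's: pigeonhole gives an egg on at most $n/k$ vertices, its edge boundary of size at most $d(n)\cdot n/k$ bounds the egg-cut number, and balancing $k$ against $d(n)\cdot n/k$ yields the $\sqrt{n\cdot d(n)}$ bound. Your extra step verifying that a minimum $E_0$--$E_1$ cut splits $G$ into exactly two components is a point the paper glosses over (it simply asserts the boundary of the smallest egg bounds the minimum egg-cut), so this is a welcome tightening rather than a different argument.
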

\begin{proof}
    Consider any $k$-partition of the vertices of $G$ into connected subgraphs. We claim that the maximum order of such a disjoint scramble is at most $\min(k, d(n) \cdot \frac{n}{k})$. To see this, note that the hitting number of such a scramble must be $k$. Thus, maximizing scramble order reduces to maximizing its egg-cut. Because $G$ has degree bounded by $d(n)$, the minimum egg-cut is bounded above by $d(n) \cdot \min_{E \in \scramble} |E|$. Observe that $\min_{E \in \scramble} |E| \leq \frac{n}{k}$. It then follows that
    \begin{align*}
        \dsn(G) &\leq \max_{k \in [n]} \min\left(k, d(n) \cdot \frac{n}{k}\right).
    \end{align*}
    Because $d(n) \cdot \frac{n}{k}$ decreases monotonically with $k$, the maximum is achieved when $k = d(n) \cdot \frac{n}{k}$. Thus, $k = \sqrt{n \cdot d(n)}$ and 
    \begin{align*}
        \dsn(G) &\leq \max_{k \in [n]} \min\left(k, d(n) \cdot \frac{n}{k}\right) \\
        &= O(\sqrt{n \cdot d(n)}).
    \end{align*}
\end{proof}

\begin{corollary} \label{dsn-sqrt-n}
    If $G$ is a graph with bounded degree $d$, $\dsn(G) = O(\sqrt{n})$.
\end{corollary}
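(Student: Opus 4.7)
The plan is essentially to invoke the preceding proposition as a black box with $d(n)$ taken to be the constant function $d$. Specifically, since $G$ has maximum degree bounded by $d$, it belongs to the graph class whose maximum degree ``grows'' as $d(n) = d$. Substituting into the previous bound yields
\[
\dsn(G) = O\bigl(\sqrt{n \cdot d(n)}\bigr) = O\bigl(\sqrt{n \cdot d}\bigr) = O(\sqrt{n}),
\]
where in the last step we absorb the constant factor $\sqrt{d}$ into the big-$O$.

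There is no substantive obstacle here: the corollary is a direct specialization of the more general proposition. The only thing worth being careful about is the role of $d$ in the big-$O$ notation — we want to treat $d$ as a constant (independent of $n$) so that $\sqrt{d}$ can be hidden inside the implicit constant of the asymptotic bound. This is exactly the setting of ``bounded degree $d$,'' so the treatment is immediate. I would write the proof as a two-line application: cite the previous proposition with $d(n) = d$, then note that $\sqrt{d}$ is a constant and may be absorbed to conclude $\dsn(G) = O(\sqrt{n})$.
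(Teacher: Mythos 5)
Your proof is correct and matches the paper's intended argument: the corollary is stated in the paper without proof as an immediate specialization of the preceding proposition, exactly as you describe, with $d(n)=d$ constant and $\sqrt{d}$ absorbed into the implicit constant.
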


We now explore the relationship between disjoint scramble number and treewidth. We already know that $\tw(G) \leq \sn(G) \leq \gon(G)$. Proposition \ref{dsn-sn-cart-inequality-chain} shows that $\dsn(G) \leq \sn(G) \leq \gon(G)$.  We remark that for multigraphs $G$, it is easy to find large gaps between $\dsn(G)$ and $\tw(G)$; for instance, taking $G$ to be a tree on $n$ vertices where each edge is repeated $n$ times gives an example with $\tw(G)=1$ and $\dsn(G)=n$.
 Thus we restrict our study to simple graphs and show that the relationship between $\dsn(G)$ and $\tw(G)$ can still vary. A class of graphs known as $k$-trees helps to establish the relationship in one direction.

We define
   \emph{$k$-trees} to be maximal simple graphs with treewidth $k$. By \cite{On-the-structure-of-k-trees}, they can be characterized as follows:
    \begin{itemize}
        \item The complete graph on $k+1$ vertices is a $k$-tree.
        \item A $k$-tree $G$ can be obtained from a $k$-tree $H$ by connecting a new vertex to exactly $k$ vertices in $H$ that form a $k$-clique.
        \item No other graph is a $k$-tree.
    \end{itemize}

Immediately, the definition implies that for every $k$, there exists a $k$-tree for which $\dsn(G) = \tw(G)$, namely the complete graph $K_{k+1}$. We can also use $k$-trees to construct simple graphs for which $\dsn(G) > \tw(G)$.

\begin{proposition}
    For every $k$, there exists a $k$-tree $G$ for which $\dsn(G)-\tw(G)\geq k-1$.
\end{proposition}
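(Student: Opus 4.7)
The plan is to exhibit $G$ as the $k$-th power of a sufficiently long path --- a canonical $k$-tree --- and to find an explicit disjoint scramble on it of order $2k-1$. Let $G_k$ be the graph on vertex set $\{u_1,\dots,u_n\}$ with $n = k(2k-1)$ in which $u_iu_j$ is an edge exactly when $1 \leq |i-j| \leq k$. This is a $k$-tree: begin with the $(k+1)$-clique $\{u_1,\dots,u_{k+1}\}$ and attach each subsequent vertex $u_{j+k}$ to the existing $k$-clique $\{u_j,\dots,u_{j+k-1}\}$, so $\tw(G_k) = k$. The case $k=1$ is trivial since any tree has $\dsn = \tw = 1$, so I will assume $k \geq 2$.

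For the scramble, I will partition the vertex set into $2k-1$ consecutive chunks $C_i = \{u_{(i-1)k+1}, \dots, u_{ik}\}$ for $1 \leq i \leq 2k-1$. Any $k$ consecutive indices pairwise differ by at most $k-1$, so each $C_i$ induces a $k$-clique and in particular is connected. Setting $\scramble = \{C_1, \dots, C_{2k-1}\}$ gives a disjoint scramble with $h(\scramble) = 2k-1$, so the entire argument reduces to bounding $e(\scramble)$ from below by $2k-1$.

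The heart of the proof, and the step I expect to be the main obstacle, is the egg-cut lower bound. A direct count gives that for every $m$ with $k \leq m \leq n-k$, exactly $\sum_{t=1}^{k} t = k(k+1)/2$ edges of $G_k$ cross position $m$ (namely the edges $u_au_b$ with $a \leq m < b$ and $b-a \leq k$); these form an edge cut separating $\{u_1,\dots,u_m\}$ from $\{u_{m+1},\dots,u_n\}$. Choosing $m$ strictly between two chunks realizes an egg-cut of size $k(k+1)/2$. To show that no smaller edge set separates any $C_i$ from any $C_j$, I will argue as follows: any such disconnecting set $F$ induces a bipartition $V = A \sqcup B$ with $C_i \subseteq A$ and $C_j \subseteq B$, so $|F| \geq |E(A,B)|$. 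It then suffices to check that $|E(A,B)| \geq k(k+1)/2$ for every such bipartition, which I plan to establish either by a Menger-style construction of $k(k+1)/2$ pairwise edge-disjoint paths between any two chunks, or by a swap/isoperimetric argument showing that a minimizing bipartition can be taken contiguous (i.e., $A = \{u_1,\dots,u_m\}$), at which point the count above applies directly.

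Once $e(\scramble) \geq k(k+1)/2$ is secured, the elementary inequality $k(k+1)/2 \geq 2k-1$ (valid for all $k \geq 2$, with equality at $k=2$) gives $\|\scramble\| = \min\bigl(2k-1,\ k(k+1)/2\bigr) = 2k-1$, so $\dsn(G_k) \geq 2k-1 = \tw(G_k) + (k-1)$, as required.
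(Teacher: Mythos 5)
Your overall strategy is the same as the paper's: realize the $k$-tree as the $k$-th power of a path, exhibit an explicit disjoint scramble with hitting number roughly $2k$, and lower-bound its egg-cut number by $2k-1$. The paper works on $4k$ vertices with $2k$ singleton eggs $v_{k+1},\dots,v_{3k}$ kept away from both ends; you instead partition a longer path power into $2k-1$ clique chunks. Everything you actually verify --- the $k$-tree structure, $h(\mathcal{S})=2k-1$, the contiguous cut of size $k(k+1)/2$, and the closing arithmetic $k(k+1)/2\geq 2k-1$ --- is correct.

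The gap is that the one step carrying all the technical weight, the lower bound $e(\mathcal{S})\geq 2k-1$ (you aim for $k(k+1)/2$), is never proved: you name two candidate methods but execute neither, and neither is automatic. The compression claim that a minimizing bipartition can be taken contiguous is true for path powers but is itself a nontrivial isoperimetric lemma needing its own proof; and while adjacent chunks are joined by $k(k+1)/2$ direct edges (so that case is free), non-adjacent chunks require a genuine construction. The Menger route is what the paper actually carries out: it writes down $2k-1$ explicit pairwise edge-disjoint paths between any two eggs, namely $k$ paths that take steps of size $k$ and $k-1$ further paths that take steps of size $k-1$, distinguished by their starting offsets. Since you only need $e(\mathcal{S})\geq 2k-1$ rather than $k(k+1)/2$, the cheapest repair is to pick one representative vertex per chunk and adapt that construction between representatives, taking care near the ends of your graph where vertices such as $u_1$ have degree only $k$ (this boundary issue is exactly why the paper pads its graph to $4k$ vertices and keeps its eggs in the middle). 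As written, your argument is a correct plan whose central lemma is still outstanding.
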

\begin{proof}  Let $G$ be the simple graph with vertex set $v_1,\ldots,v_{4k}$ where $v_i$ is adjacent to $v_j$ if and only if $|i-j|\leq k$. This graph is illustrated for $k=3$ in Figure \ref{figure:3_tree}. Note that $G$ can be obtained from $K_{k+1}$ by iteratively adding vertices connected to $k$-cliques, so $G$ is a $k$-tree and $\tw(G)=k$.

\begin{figure}[hbt]
    \centering
   \includegraphics[scale = 1]{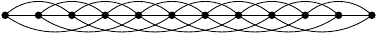}
    \caption{A $3$-tree $G$ with $\dsn(G)\geq 5$, achieved by having single vertex eggs on the middle six vertices}
   \label{figure:3_tree}
\end{figure}

Let $\mathcal{S}$ be a scramble consisting of verteggs, one for each of $v_{k+1}$, $v_{k+2},\ldots,v_{3k}$.  Since there are $2k$ disjoint eggs, we have $h(\mathcal{S})=2k$.

To lower bound $e(\mathcal{S})$, we will find a collection of edge disjoint paths connecting an arbitrary pair of eggs, say $v_i$ and $v_j$ with $k+1\leq i<j\leq 3k+1$.  Letting $1\leq m\leq k$, start our path as $v_i-v_{i+m}$.  From there, increase the index of the vertex by $k$ at a time, until we fall into $\{v_{j},v_{j+1},\ldots,v_{j+k-1}\}$ (note that this may take zero steps of size $k-1$).  Then, move to $v_j$.  This gives a total of $k$ paths from $v_i$ to $v_j$, and they are pairwise disjoint.  For another collection of paths, let $0\leq m\leq k-2$, and start the path $v_i-v_{i-m}$.  From there, increase the index of the vertex by $k-1$ at a time, until we fall into $\{v_{j-k+1},v_{j-k+2},\ldots,v_{j}\}$ (again, this may take zero steps). This gives us another $k-1$ paths, which are pairwise edge-disjoint not only from one another, but also from the first $k$ paths.  Thus we have found a set of $2k-1$ pairwise edge-disjoint paths connecting an arbitrary pair of eggs.  Any egg cut must include at least one edge from each of these paths, so $e(\mathcal{S})\geq 2k-1$.

We have $||\mathcal{S}||=\min\{h(\mathcal{S}),e(\mathcal{S})\}\geq\min\{2k,2k-1\}=2k-1$, so $\dsn(G)\geq ||\mathcal{S}||=2k-1$.  We conclude that $\dsn(G)-\tw(G)\geq 2k-1-k=k-1$, as desired.
\end{proof}

On the flip side, $\dsn(G)$ can also be arbitrarily small as compared to $\tw(G)$ for simple graphs. The expander graphs from Theorem \ref{expander-graphs} can be used to show that treewidth (and therefore also scramble number) can grow quadratically in disjoint scramble number. 

\begin{corollary}
    There exists of family of graphs $(G_n)_{n \geq 1}$ for which $\tw(G) = \Omega(\dsn(G)^2)$.
\end{corollary}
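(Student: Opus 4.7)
The plan is to exhibit the same expander-graph family used in the proof of Theorem \ref{expander-graphs} and observe that treewidth and disjoint scramble number are controlled in opposite directions. For fixed $d \geq 3$, take the $d$-regular family $(G_n)_{n \geq 1}$ produced by Proposition \ref{d-reg-vx} (with some fixed $\epsilon > 0$ and corresponding $\alpha > 0$). The proof of Theorem \ref{expander-graphs} already exhibits this family and establishes, via Proposition \ref{tw-vx}, that $\tw(G_n) \geq \lfloor \mathrm{vx}_\alpha(G_n) \cdot (\alpha/2) \cdot n \rfloor = \Omega(n)$, so I would simply cite that bound.

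Next, I would bound disjoint scramble number from above. Since each $G_n$ is $d$-regular for a fixed constant $d$, its maximum degree is bounded, so Corollary \ref{dsn-sqrt-n} applies and gives $\dsn(G_n) = O(\sqrt{n})$. Squaring, $\dsn(G_n)^2 = O(n)$.

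Combining the two bounds yields
\[
\tw(G_n) = \Omega(n) = \Omega(\dsn(G_n)^2),
\]
which is exactly the claim. The argument has no real technical obstacle; the only thing to be careful about is making clear that the family used is precisely the one from Theorem \ref{expander-graphs}, so that the treewidth bound (implicit in that theorem's proof) and the bounded-degree hypothesis needed for Corollary \ref{dsn-sqrt-n} are simultaneously available. I would not need any separate construction or new argument beyond stitching these two already-established results together.
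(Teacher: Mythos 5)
Your proposal is correct and follows exactly the paper's own argument: take the $d$-regular expander family from Theorem \ref{expander-graphs} with $\tw(G_n)=\Omega(n)$, apply Corollary \ref{dsn-sqrt-n} to get $\dsn(G_n)=O(\sqrt{n})$, and combine. No differences worth noting.
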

\begin{proof}
    Consider the $d$-regular expander graphs of Theorem \ref{expander-graphs} with $\tw(G) = \Omega(n)$. By Proposition \ref{dsn-sqrt-n}, $\dsn(G) = O(\sqrt{n})$. Therefore, $\tw(G) = \Omega(\dsn(G)^2)$. 
\end{proof}

We now turn to chip-firing invariants. 
 Since disjoint scramble number is a lower bound on gonality, it is natural to ask when they are equal. Cartesian products of graphs yield many such examples. We first establish a lower bound on the disjoint scramble number of Cartesian products of graphs.

\begin{proposition} \label{dsn-cartesian-product}
    Let $G$ and $H$ be connected graphs. Then, $\dsn(G \square H) \geq \min(V(H), V(G) \lambda(H))$. 
\end{proposition}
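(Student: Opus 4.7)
The plan is to exhibit an explicit disjoint scramble on $G \square H$ whose order meets the claimed bound. The natural candidate is the \emph{horizontal slice} scramble: for each $v \in V(H)$, let $E_v = V(G) \times \{v\}$, and set $\mathcal{S} = \{E_v : v \in V(H)\}$. Each $E_v$ induces a copy of $G$ in $G \square H$ (so is a connected subgraph since $G$ is connected), and the $E_v$'s are pairwise vertex-disjoint, so $\mathcal{S}$ is a valid disjoint scramble on $G \square H$. Since it consists of $|V(H)|$ disjoint eggs, its hitting number is exactly $h(\mathcal{S}) = |V(H)|$.

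The main work is to show $e(\mathcal{S}) \geq |V(G)| \cdot \lambda(H)$. Let $F$ be any egg-cut of $\mathcal{S}$, say one that separates $E_v$ from $E_{v'}$ in $(G \square H) \setminus F$. For each $u \in V(G)$, consider the column $C_u = \{u\} \times V(H)$; the subgraph of $G \square H$ induced on $C_u$ is isomorphic to $H$ via $(u,w) \mapsto w$. Let $F_u$ denote the subset of $F$ consisting of those edges with both endpoints in $C_u$. Since different columns share no edges, the sets $F_u$ are pairwise disjoint, so $|F| \geq \sum_{u \in V(G)} |F_u|$; it therefore suffices to show $|F_u| \geq \lambda(H)$ for each $u$.

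The key claim is that $F_u$ (viewed as edges of $H$ via the column isomorphism) separates $v$ from $v'$ in $H$. Indeed, $(u,v) \in E_v$ and $(u,v') \in E_{v'}$ lie in distinct components of $(G\square H) \setminus F$. Any $v$-$v'$ path $P$ inside $C_u$ corresponds to a $v$-$v'$ path in $H$ whose edges all lie in $C_u$; such a path is a $v$-$v'$ path in $G \square H$ as well, so it must contain some edge of $F$, and since $P \subseteq C_u$ that edge lies in $F_u$. Thus every $v$-$v'$ path in $H$ meets $F_u$, so $F_u$ is a $v$-$v'$ edge cut in $H$ and hence $|F_u| \geq \lambda_H(v,v') \geq \lambda(H)$. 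Summing gives $|F| \geq |V(G)|\cdot \lambda(H)$, so $e(\mathcal{S}) \geq |V(G)|\lambda(H)$ and
\[ \dsn(G \square H) \geq \|\mathcal{S}\| = \min(h(\mathcal{S}), e(\mathcal{S})) \geq \min(|V(H)|, |V(G)|\lambda(H)).\]

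The main subtlety, and the step worth stating carefully, is the argument that $F_u$ separates $v$ from $v'$ within the column $C_u$ despite $F$ possibly containing many ``horizontal'' edges outside $C_u$: one has to restrict attention to paths that stay inside $C_u$, which only cross edges in $F_u$, and verify that blocking just these in-column paths is already forced. Everything else (connectedness of the eggs, disjointness of the $F_u$'s across columns, and the step $\lambda_H(v,v') \geq \lambda(H)$) is routine.
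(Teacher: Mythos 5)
Your proof is correct and follows essentially the same route as the paper's: the scramble of canonical copies of $G$ indexed by $V(H)$, hitting number $|V(H)|$, and a per-column contribution of $\lambda(H)$ edges to any egg-cut, summed over the $|V(G)|$ edge-disjoint columns. The only cosmetic difference is that the paper lower-bounds each column's contribution via $\lambda(H)$ edge-disjoint paths between the two relevant vertices (the Menger-style formulation), whereas you argue directly that the cut restricted to a column is a $v$--$v'$ edge cut in $H$; these are dual phrasings of the same step.
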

\begin{proof}
    Consider the scramble $\scramble$ whose eggs are canonical copies of $G$. The hitting number of this scramble is the number of eggs, which is $|V(H)|$. To bound the egg-cut number of $\scramble$, consider eggs $E_1, E_2 \in \scramble$ with $A \subset V(G \square H)$ such that $E_1 \subset A$ and $E_2 \subset A^C$. By construction, $E_1 = G \square w_1$ and $E_2 = G \square w_2$ for some $w_1, w_2 \in V(H)$. Note that for any $v \in G$, there exist at least $\lambda(H)$ edge-disjoint paths between $v \square w_1$ and $v \square w_2$. Because each edge-disjoint path must have at least one edge crossing the cut $(A, A^C)$, we conclude that the egg-cut is at least $|V(G)| \lambda(H)$. Therefore, $\dsn(G \square H) \geq \min(V(H), V(G) \lambda(H))$. 
\end{proof}

\begin{proposition} 
    Let $G$ and $H$ be connected graphs.
    \begin{enumerate}[label = (\roman*)]
        \item If $G$ is a tree and $H$ is a graph with $\frac{|V(H)|}{\lambda(H)} \leq |V(G)|$, then $\dsn(G \square H) = \sn(G \square H) = \cart(G \square H) = \gon(G \square H) = |V(H)|$.
        \item If $G$ and $H$ are graphs with $\gon(H) = \lambda(H)$ and $|V(G)| \leq \frac{|V(H)|}{\lambda(H)}$, then $\dsn(G \square H) = \sn(G \square H) = \cart(G \square H) = \gon(G \square H) = |V(G)| \cdot \lambda(H)$. 
    \end{enumerate}
    
\end{proposition}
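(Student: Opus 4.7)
The plan is to sandwich $\dsn(G \square H)$, $\sn(G \square H)$, and $\gon(G \square H)$ between matching bounds at the claimed value, and then promote the equality of $\dsn$ and $\sn$ to one with $\cart$ via Theorem \ref{cart=sn}.

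For the lower bound, I apply Proposition \ref{dsn-cartesian-product}, which yields $\dsn(G \square H) \geq \min(|V(H)|, |V(G)| \lambda(H))$. The hypothesis $|V(H)|/\lambda(H) \leq |V(G)|$ of part (i) forces this minimum to equal $|V(H)|$, while the hypothesis $|V(G)| \leq |V(H)|/\lambda(H)$ of part (ii) forces it to equal $|V(G)| \lambda(H)$. In each case the lower bound matches the claimed value.

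For the upper bound on gonality I would establish, by a standard divisor-lifting argument, the general inequality $\gon(G \square H) \leq \min(|V(H)| \gon(G), |V(G)| \gon(H))$. Given a positive-rank divisor $D_G$ on $G$, consider the divisor on $G \square H$ that places $D_G(v)$ chips at each vertex $(v, w)$; its total degree is $|V(H)| \gon(G)$. The key observation is that if one fires the set $\{(v,w) : v \in S\}$ (for some $S \subseteq V(G)$ and all $w \in V(H)$) simultaneously, then every edge in the $H$-direction has both endpoints inside the fired set and contributes nothing to outflow, so the chip movement in each $H$-slice is identical to firing $S$ in $G$. Hence any firing sequence witnessing that $D_G - (v)$ reduces to effective on $G$ lifts verbatim to a sequence showing that the lifted divisor minus $(v, w_0)$ reduces to effective on $G \square H$, for any target $(v, w_0)$. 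By the symmetric argument, $\gon(G \square H) \leq |V(G)| \gon(H)$. For part (i), using $\gon(G) = 1$ (since $G$ is a tree) gives $\gon(G \square H) \leq |V(H)|$; for part (ii), using $\gon(H) = \lambda(H)$ gives $\gon(G \square H) \leq |V(G)| \lambda(H)$.

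Combining the lower and upper bounds with the chain $\dsn(G \square H) \leq \sn(G \square H) \leq \gon(G \square H)$, all three invariants collapse to the claimed value in each part. Since $\dsn(G \square H) = \sn(G \square H)$, Theorem \ref{cart=sn} finally gives $\cart(G \square H) = \sn(G \square H)$, completing both parts. The only nonroutine ingredient is the divisor-lifting argument across Cartesian products, which amounts to careful bookkeeping of edge outflows in the two factor directions; once one checks that edges within a single $H$-slice cancel during a uniform firing, everything else is immediate.
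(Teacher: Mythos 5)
Your proof is correct and follows the same overall architecture as the paper's: the lower bound comes from Proposition \ref{dsn-cartesian-product}, the hypotheses of (i) and (ii) select which term of the minimum is attained, the chain $\dsn(G\square H)\leq \sn(G\square H)\leq \gon(G\square H)$ collapses everything to the claimed value, and Theorem \ref{cart=sn} upgrades $\dsn=\sn$ to $\cart=\sn$. The one genuine difference is the gonality upper bound: the paper simply cites \cite[Theorem 5.1]{echavarria2021scramble}, which already gives $\sn(G\square H)=\gon(G\square H)$ equal to the claimed value under these hypotheses, whereas you reprove the inequality $\gon(G\square H)\leq \min(|V(H)|\gon(G),\, |V(G)|\gon(H))$ from scratch by lifting a positive-rank divisor and a witnessing firing function constantly across the $H$-coordinate. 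That lifting argument is sound --- edges in the $H$-direction have equal function values at both endpoints and so carry no chips, so each $G$-slice evolves exactly as $G$ does --- and combined with $\gon(G)=1$ for trees in part (i) and $\gon(H)=\lambda(H)$ in part (ii) it yields exactly the bounds needed; the payoff of your route is a self-contained proof, at the cost of redoing a known lemma the paper outsources.
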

\begin{proof} 
    Proposition \ref{dsn-cartesian-product} gives that $\dsn(G \square H) \geq |V(H)|$ and $\dsn(G \square H) \geq |V(G)| \lambda(H)$ for (i) and (ii) respectively. \cite[Theorem 5.1(i)]{echavarria2021scramble} combined with Propositions \ref{dsn-sn-cart-inequality-chain} and \ref{cart=sn} then show (i). Similarly, \cite[Theorem 5.1(ii)]{echavarria2021scramble} combined with Propositions \ref{dsn-sn-cart-inequality-chain} and \ref{cart=sn} show (ii).
\end{proof} 

From this, we get strengthened versions of \cite[Corollaries 5.2-5.6]{echavarria2021scramble}. The table below summarizes notable graphs for which the above Theorem applies, giving $\dsn(G \square H) = \sn(G \square H) = \cart(G \square H) = \gon(G \square H)$. For all graphs, we assume $|V(G)|, |V(H)| \geq 2$.

\begin{table}[H]
\centering
    \begin{tabular}{|c c c c|}
        \hline
        $G$ & $H$ & Assumptions & $\cart(G \square H)$ \\
        \hline
        $T$ & $H$ & $\gon(H) = \lambda(H) = k$ & $\min(|V(H)|, k|V(G)|)$ \\
        $P_{\ell}$ & $P_{m} \square P_{n}$ & $\ell \geq mn/2$ & $mn$ \\
        $G$ & $K_{\ell} \square T$ & $|V(G)| \leq |V(T)|, |V(T)| \geq 2$ & $\ell |V(G)|$ \\
        $G$ & $H$ & $\lambda(H) = \gon(H) = 2, |V(G)| \leq |V(H)|/2$ & $2|V(G)|$ \\
        $G$ & $K_{m,n}$ & $|V(G)| \leq (m+n)/m, m \leq n$ & $m|V(G)|$ \\
        \hline
    \end{tabular} \\
    \caption{\label{dsn-table} Cartesian Products of graphs for which $\dsn(G \square H) = \sn(G \square H) = \cart(G \square H) = \gon(G \square H)$}
\end{table}

In fact, \cite[Corollary 5.8]{screewidth-og} shows equality with $\scw(G \square H)$ in many of these cases, sometimes with added conditions. With the existing assumptions, cases 3 and 5 also have equality with $\scw(G \square H)$. In cases 1 and 4, adding the assumption that $\lambda(H) = \scw(H)$ also gives equality with $\scw(G \square H)$. In case 2, we note that the screewidth of the 3-dimensional grid graph $G_{l,m,n}$ where $\ell \geq mn/2$ is also $mn$. \\

We now show more cases where all five invariants converge by strengthening \cite[Corollary 5.4]{screewidth-og} in two stages.

\begin{proposition} \label{5-equality}
    If $G$ is a $k$-edge connected graph of gonality $k$, then $\dsn(G) = \sn(G) = \cart(G) = \scw(G) = \gon(G) = k$.
\end{proposition}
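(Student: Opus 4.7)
The plan is to leverage the prior result \cite[Corollary 5.4]{screewidth-og}, which this proposition explicitly strengthens, and which already establishes $\sn(G) = \scw(G) = \gon(G) = k$ for $k$-edge connected graphs of gonality $k$. All that remains is to insert $\dsn(G)$ and $\cart(G)$ into the chain of equalities, and the natural tool is the vertegg scramble together with Theorem \ref{cart=sn}.

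First I would observe that $\gon(G) = k$ forces $|V(G)| \geq k$, since placing one chip on every vertex of $G$ yields a divisor of degree $|V(G)|$ whose rank is positive (removing any single chip still leaves an effective divisor), so $\gon(G) \leq |V(G)|$. Next I would consider the vertegg scramble $\mathcal{S}$ on $G$, whose eggs are the singletons $\{v\}$ for $v \in V(G)$. This scramble is disjoint by construction; its hitting number equals $|V(G)| \geq k$, and its egg-cut number equals $\lambda(G) \geq k$ by the $k$-edge-connectivity hypothesis, since any egg-cut here is simply an edge cut separating $G$ into two nonempty components. Hence $||\mathcal{S}|| = \min(|V(G)|, \lambda(G)) \geq k$, giving $\dsn(G) \geq k$. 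Combined with $\dsn(G) \leq \sn(G) = k$ from Proposition \ref{dsn-sn-cart-inequality-chain} and the prior result, this yields $\dsn(G) = \sn(G) = k$.

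With $\dsn(G) = \sn(G)$ in hand, Theorem \ref{cart=sn} immediately gives $\cart(G) = \sn(G) = k$, closing the chain. There is no serious obstacle beyond verifying the two lower bounds on the order of the vertegg scramble; the bulk of the content is already done in \cite[Corollary 5.4]{screewidth-og}, and the new arguments simply thread $\dsn$ and $\cart$ through the existing equalities. The one step that requires minor care is ensuring $|V(G)| \geq k$, since on multigraphs edge connectivity alone does not control vertex count (e.g. $K_2$ with many parallel edges), but the gonality hypothesis $\gon(G) = k$ supplies this for free via the trivial upper bound $\gon(G) \leq |V(G)|$.
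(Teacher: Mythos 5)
Your proposal is correct and follows essentially the same route as the paper: cite \cite[Corollary 5.4]{screewidth-og} for $\sn(G)=\scw(G)=\gon(G)=k$, use the vertegg scramble together with $k=\gon(G)\leq |V(G)|$ to get $\dsn(G)\geq k$, and then apply Proposition \ref{dsn-sn-cart-inequality-chain} and Theorem \ref{cart=sn} to close the chain. The only difference is that you spell out the justification for $\gon(G)\leq |V(G)|$, which the paper takes for granted.
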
 
\begin{proof} 
    \cite[Corollary 5.4]{screewidth-og} shows equality $\sn(G) = \scw(G) = \gon(G) = k$. It then suffices to show that $\dsn(G) = \sn(G) = \cart(G)$. Recall that the {vertegg scramble} on $G$, where every vertex of $G$ constitutes an egg, shows that $\dsn(G) \geq \min(|V(G)|, \lambda(G)) = \min(|V(G)|, k)$. Because $k = \gon(G) \leq |V(G)|$, it follows that $\dsn(G) \geq k$. Propositions \ref{dsn-sn-cart-inequality-chain} and \ref{cart=sn} then show the claim.
\end{proof}

We can use this result to compute the carton number of common families of graphs.

\begin{proposition}
    For the following graphs families, we have $\dsn(G) = \sn(G) = \cart(G) = \scw(G) = \gon(G)$. 
    
    \begin{enumerate}[label=(\arabic*)]
        \item Cycle graphs $C_n$, with $\cart(C_n) = 2$.
        \item Complete multipartite graphs $K_{n_1, n_2, \ldots , n_k}$, with $\cart(K_{n_1, n_2, \ldots , n_k}) = \sum_i n_i - \max_i\{n_i\}$.
        \item Grid graphs $G_{m,n}$, with $\cart(G_{m,n}) = \min(m,n)$.
        \item The stacked prism graphs\footnote{The graph $Y_{m,n}$ is constructed from the grid graph $G_{m,n}$ by adding edges between the top and bottom rows of $n$ vertices.} $Y_{m,n}$, with $\cart(Y_{m,n}) = \min(m, 2n)$.
    \end{enumerate}
\end{proposition}
\begin{proof}
    The cycle graph $C_n$ is $2$-edge connected and has gonality $2$. The $k$-partite graph $K_{n_1, n_2, \ldots , n_k}$ is an $\ell$-edge connected graph with gonality $\ell$, where $\ell = \sum_i n_i - \max_i\{n_i\}$. Proposition \ref{5-equality} immediately proves claims (1) and (2). 
    
    \cite[Proposition 5.5]{screewidth-og} shows that $\sn(G_{m,n}) = \scw(G_{m,n}) = \gon(G_{m,n}) = \min(m,n)$. The disjoint scramble of all rows or all columns (whichever is smaller), shows that $\dsn(G_{m,n}) \geq \min(m,n)$, thereby proving (3). \cite[Proposition 5.5]{screewidth-og} shows that $\sn(Y_{m,n}) = \scw(Y_{m,n}) = \gon(Y_{m,n}) = \min(m,2n)$. Notice that $Y_{m,n} = C_m \square P_n$, so case $1$ in Table \ref{dsn-table} shows that $\dsn(Y_{m,n}) = \cart(Y_{m,n}) = \min(m, 2n)$, thereby proving (4). 
\end{proof}


    

\begin{proposition}
    If $G$ is a tree, $H$ is $k$-edge connected with $\gon(H) = k$, and $1 + \delta(H) \geq \min(|V(H)|, |V(G)| \lambda(H))$, then $\dsn(G \square H) = \sn(G \square H) = \cart(G \square H) = \scw(G \square H) = \gon(G \square H) = \min(|V(H)|, |V(G)| \lambda(H))$.
\end{proposition}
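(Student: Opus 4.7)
The plan is to chain together the immediately preceding proposition, which already establishes $\dsn = \sn = \cart = \gon$ in the Cartesian product setting, with a new upper bound on $\scw(G \square H)$ built from a tree-cut decomposition. Most of the work reduces to checking hypotheses of prior results; the nontrivial new content is the screewidth bound.

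First I would note that $\lambda(H) = \gon(H) = k$: the $k$-edge connectivity hypothesis gives $\lambda(H) \geq k$, while the vertegg scramble on $H$ has order $\min(|V(H)|, \lambda(H)) = \lambda(H)$ (since $\lambda(H) \leq \delta(H) < |V(H)|$), so $\sn(H) \geq \lambda(H)$, and hence $\lambda(H) \leq \sn(H) \leq \gon(H) = k$. Setting $m := \min(|V(H)|, |V(G)|\lambda(H))$, I would then split on which term realizes the minimum. If $m = |V(H)|$, then $|V(H)|/\lambda(H) \leq |V(G)|$ and part (i) of the preceding proposition applies (using $G$ a tree). If $m = |V(G)|\lambda(H)$, then $|V(G)| \leq |V(H)|/\lambda(H)$ and part (ii) applies (using $\gon(H) = \lambda(H)$, which we just verified). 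In either case, we obtain $\dsn(G \square H) = \sn(G \square H) = \cart(G \square H) = \gon(G \square H) = m$.

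It remains to show $\scw(G \square H) = m$. The lower bound $\scw \geq \sn = m$ is immediate from \cite{screewidth-og}. For the matching upper bound, if $m = |V(H)|$ then the tree-cut decomposition with $T = G$ and $X_v = \{v\}\times V(H)$ attains width $|V(H)|$: each link carries the $|V(H)|$ ``vertical'' edges associated to an edge of $G$, each bag has $|V(H)|$ vertices, and since $G$ is a tree no edge of $G \square H$ tunnels through any interior node. In the case $m = |V(G)|\lambda(H) < |V(H)|$, this decomposition overshoots and a finer construction is needed in which each copy of $H$ is split along a minimum edge-cut; here the hypothesis $1+\delta(H) \geq |V(G)|\lambda(H)$ enters, and I would invoke \cite[Corollary 5.8]{screewidth-og}, which produces tree-cut decompositions of Cartesian products of the required width under exactly such degree conditions.

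The main obstacle is the second subcase of the screewidth upper bound. Since $|V(H)| > |V(G)|\lambda(H)$, no bag can hold an entire copy of $H$, forcing copies to be split; but splitting causes horizontal $H$-edges to tunnel through several nodes, threatening the bag-plus-adhesion sum at interior nodes. The link adhesion can be controlled by lifting a minimum cut of $H$ to $G \square H$ (contributing $|V(G)|\lambda(H)$ edges), and the hypothesis $1+\delta(H)\geq|V(G)|\lambda(H)$ is precisely the degree condition needed to ensure that at every node the bag size plus tunneling count stays within the same budget. This delicate accounting is where the construction of \cite[Corollary 5.8]{screewidth-og} does the heavy lifting and closes the argument.
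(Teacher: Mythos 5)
Your opening step --- verifying $\lambda(H)=\gon(H)=k$ via the vertegg scramble and then splitting on which term attains $m=\min(|V(H)|,|V(G)|\lambda(H))$ so as to apply parts (i) and (ii) of the preceding proposition --- is correct, and it matches the paper, which cites row 1 of Table \ref{dsn-table} to get $\dsn=\sn=\cart=\gon=m$. Your explicit tree-cut decomposition for the subcase $m=|V(H)|$ (take $T=G$ with each bag a copy of $H$) is also fine. The gap is in the subcase $m=|V(G)|\lambda(H)<|V(H)|$: there you construct nothing and instead invoke \cite[Corollary 5.8]{screewidth-og}, asserting that it produces the required decomposition ``under exactly such degree conditions.'' It does not. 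As discussed after Table \ref{dsn-table}, that corollary covers the $T\square H$ case only under the \emph{additional} hypothesis $\lambda(H)=\scw(H)$, which is not assumed here; the entire purpose of this proposition is to trade that hypothesis for the degree condition $1+\delta(H)\geq\min(|V(H)|,|V(G)|\lambda(H))$. The ``delicate accounting'' you defer to the citation is precisely the content that is missing, so the screewidth upper bound is not established in this subcase.

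The paper closes that bound by a different and much shorter route that builds no decomposition of the product at all. By \cite{cartesian-product-edge-connectivity}, $\lambda(G\square H)=\min(\delta(G)+\delta(H),\,|V(G)|\lambda(H),\,|V(H)|\lambda(G))$, and since $G$ is a tree ($\delta(G)=\lambda(G)=1$) this equals $\min(1+\delta(H),\,|V(G)|\lambda(H),\,|V(H)|)$. The hypothesis $1+\delta(H)\geq m$ then forces $\lambda(G\square H)=m=\gon(G\square H)$, so $G\square H$ is itself a graph whose edge connectivity equals its gonality, and the same mechanism as in Proposition \ref{5-equality} (via \cite[Lemma 10.24]{sandpiles} and \cite[Theorem 5.3]{screewidth-og}) yields $\scw(G\square H)\leq\gon(G\square H)$. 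Note that this is also the only place the hypothesis $1+\delta(H)\geq m$ is genuinely needed. To repair your argument, either carry out the explicit width-$(|V(G)|\lambda(H))$ construction you sketch, or replace the whole screewidth step with the edge-connectivity computation above.
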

\begin{proof}

    By \cite[Corollary 5.2]{echavarria2021scramble} (row 1 of table 1), we have that $\dsn(G \square H) = \sn(G \square H) = \cart(G \square H) = \gon(G \square H)$. \cite{cartesian-product-edge-connectivity} proves that $\lambda(G \square H) = \min(\delta(G) + \delta(H), |V(G)|\lambda(H), |V(H)| \lambda(G))$. Because $G$ is a tree, $\lambda(G) = \delta(G) = 1$. Therefore, when $1 + \delta(H) \geq \min(|V(H)|, |V(G)| \lambda(H))$, we have that \\
    $\lambda(G \square H) = \min(|V(H)|, |V(G)|\lambda(H)) = \gon(G \square H)$. By \cite[Lemma 10.24]{sandpiles} and \cite[Theorem 5.3]{screewidth-og}, we then get that $\scw(G \square H) \leq \gon(G \square H)$. Because $\sn(G \square H) = \gon(G \square H)$, it then follows that $\scw(G \square H) = \gon(G \square H)$.  
\end{proof}

\section{Complexity of scramble number and variants} \label{complexity-sn-variants}

We now study the computational complexity of scramble number and disjoint scramble number. First, we apply Courcelle's Theorem to show that the decision problem $\dsn(G) \geq k$ is fixed parameter tractable when parametrized by treewidth and $k$. We then show that the scramble number and gonality of specific graph families can be efficiently approximated to a constant factor.

\subsection{Fixed parameter tractability of disjoint scramble number}

Courcelle's theorem is a logic-based meta-theorem that gives a sufficient condition for graph-theoretic properties to be decided in fixed-parameter linear time. In particular, a problem can be decided in fixed parameter linear time if it can be solved in time $f(k) \cdot n$ for some computable function $f$ and $k$ is the variable we parameterize. Courcelle's theorem relies on a subset of second-order logic on graphs known as monadic second-order logic, which we recall here; see \cite{fundamentals_of_parameterized_complexity} for more details.

\textit{Monadic second-order logic} (MS$_2$) allows logical elements $\land$, $\lor$, $\neg$, $\forall$, $\exists$, and variables representing vertices, edges, sets of vertices, and sets of edges. The following binary relations are also allowed
    \begin{enumerate}[label=(\arabic*)]
        \item $v \in V$ for vertex variable $v$ and vertex set $V$
        \item $e \in E$ for edge variable $e$ and edge set $E$
        \item $\mathrm{inc}(e,v)$ for edge variable $e$, vertex variable $v$, indicating that edge $e$ is incident to vertex $v$
        \item $\mathrm{adj}(u,v)$ for vertex variables $u$ and $v$, indicating that vertices $u$ and $v$ are adjacent
        \item Equality between vertices, edges, sets of vertices, or sets of edges
    \end{enumerate} 

\begin{theorem}[Courcelle's Theorem, Theorem 4.1.1 in \cite{courcelle's-theorem-overview-and-applications}] \label{courcelle}  Let $\varphi$ be a statement about hypergraphs written in monadic second order logic. Then there is an algorithm which, for any hypergraph $G$ on $n$ vertices of treewidth $\tw(G)$, decides whether $\varphi$ holds in $G$ in ti\ldots me $O(f(|\varphi| , \tw(G)) \cdot n)$, for some computable function $f$. 
\end{theorem}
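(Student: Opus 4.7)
The plan is to follow the classical approach via tree automata, building on the Büchi--Doner--Thatcher correspondence between MSO logic and finite tree automata, together with Courcelle's extension to bounded-treewidth graph classes. The high-level strategy has three parts: (i) represent the input hypergraph $G$ as an algebraic term over a finite signature whose arity is determined by $\tw(G)$; (ii) translate the MSO formula $\varphi$ into a deterministic bottom-up tree automaton $\mathcal{A}_\varphi$ whose accepting runs on such terms correspond exactly to models of $\varphi$; (iii) run $\mathcal{A}_\varphi$ on the term in linear time.

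For step (i), I would use the standard encoding via $k$-boundaried hypergraphs with $k=\tw(G)+1$, equipping each subgraph in the tree decomposition with up to $k$ distinguished ``boundary'' vertices. The operations of the signature are disjoint union of boundaried graphs, renaming of boundary labels, addition of hyperedges among the current boundary, and ``forget'' operations that drop a boundary label. A tree decomposition of $G$ of width $\tw(G)$ can be converted in linear time into a term $t(G)$ of size $O(n)$ over this signature, and every term over this signature denotes a hypergraph. For the meta-theorem one must either be handed the decomposition or first compute it; by Bodlaender's algorithm this can be done in time $f'(\tw(G))\cdot n$, which is absorbed into the $f$ in the final bound.

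Step (ii) is where the main difficulty lies. I would proceed by structural induction on $\varphi$ enriched with variable assignments encoded into the label alphabet. Atomic formulas $\mathrm{inc}(e,v)$, $\mathrm{adj}(u,v)$, and set memberships are realized by small automata that inspect the labels. Conjunction and negation correspond to the product construction and to complementation of deterministic tree automata. The hard step is existential quantification (over vertices, edges, or sets), which corresponds to projecting out a coordinate of the alphabet and thus introduces nondeterminism; this is then removed via the subset construction for tree automata, which incurs an exponential blow-up of the state set at each quantifier alternation. Iterating yields a deterministic tree automaton whose size is bounded by a tower of exponentials in $|\varphi|$ and whose alphabet size is bounded in terms of $k$. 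This is the source of the bound ``computable function $f$'': no elementary dependence on $|\varphi|$ can be achieved in general, but computability is clear from the inductive construction.

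Step (iii) is then immediate: a deterministic bottom-up tree automaton evaluates on a term of size $O(n)$ in time $O(|\mathcal{A}_\varphi|\cdot n)$, and the transition function is looked up in constant time. Combining the three steps yields the claimed bound $O(f(|\varphi|,\tw(G))\cdot n)$. The main obstacle is step (ii): one must carefully define the semantics of MSO formulas with free variables on the term encoding, prove by induction that the constructed automaton accepts precisely the terms corresponding to models, and verify that acceptance is independent of the particular tree decomposition chosen to represent $G$. A secondary technical point is handling the edge-set quantification characteristic of $\mathrm{MS}_2$ as opposed to $\mathrm{MS}_1$, which is accommodated by letting the alphabet also record, for each stored edge, which edge-set variables it belongs to.
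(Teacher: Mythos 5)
The paper does not prove this statement at all: it is Courcelle's Theorem, imported verbatim with a citation to Barr's expository thesis, and used as a black box to establish Theorem \ref{dsn-fpt}. Your sketch is the standard tree-automata proof (term encoding of a width-$k$ tree decomposition via boundaried graphs, Büchi--Doner--Thatcher translation of MSO into a bottom-up tree automaton with the subset construction absorbing quantifier projections, then linear-time evaluation), which is essentially the argument the cited source follows, so at the level of detail given your proposal is correct and matches the intended proof.
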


\begin{lemma}
    \cite[Corollary 6.1.2]{courcelle's-theorem-overview-and-applications} $\mathrm{Path}(P, x, y)$, for vertices $x, y \in V(G)$ and subset $P\subseteq E(G)$ expresses whether $x$ and $y$ are connected in $G$ using edges in $P$. It can be expressed as a constant-size formula in $\mathrm{MS}_2$.
\end{lemma}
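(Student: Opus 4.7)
The plan is to exhibit an explicit $\mathrm{MS}_2$ formula that captures the relation ``$P$ is an $x$-$y$ path'' using only constantly many symbols (independent of $|V(G)|$ and $|E(G)|$). I would treat $P$ as an edge set and conjoin three degree-type clauses: (i) exactly one edge of $P$ is incident to $x$, (ii) exactly one edge of $P$ is incident to $y$, and (iii) every vertex $v$ distinct from $x$ and $y$ that is incident to at least one edge of $P$ is incident to exactly two edges of $P$. Each of these is a constant-size formula using the incidence predicate $\mathrm{inc}$, equality, and quantification over single vertices and edges; for instance, ``incident to exactly two edges of $P$'' is expressed as $\exists e_1 \exists e_2 (e_1 \neq e_2 \land e_1 \in P \land e_2 \in P \land \mathrm{inc}(e_1,v) \land \mathrm{inc}(e_2,v) \land \forall e_3 \in P\, (\mathrm{inc}(e_3,v) \to (e_3 = e_1 \lor e_3 = e_2)))$.

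Next I would enforce that the endpoints $x$ and $y$ actually lie in the same connected component of the subgraph $(V,P)$, so that $P$ really contains an $x$-$y$ path (as opposed to a disjoint union including, say, a cycle plus an irrelevant path). The standard trick is the separator characterization of connectivity, which avoids any inductive or recursive definition by using second-order quantification over vertex sets:
\[
\forall S \subseteq V.\, \bigl[(x \in S \land y \notin S) \to \exists e \in P\,\exists u \in V\,\exists v \in V\, (\mathrm{inc}(e,u) \land \mathrm{inc}(e,v) \land u \in S \land v \notin S)\bigr].
\]
This clause is again of constant size and is permitted in $\mathrm{MS}_2$ since $S$ is a vertex set variable.

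Conjoining the degree clauses with the separator clause yields the required formula $\mathrm{Path}(P,x,y)$, whose length does not depend on $G$. Its existential closure $\exists P\, \mathrm{Path}(P,x,y)$ is then a constant-size formula in the free variables $x,y$ that holds precisely when $x$ and $y$ lie in the same connected component of $G$, as desired.

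The main obstacle is the connectivity requirement on $P$: a naive ``there is a sequence of edges $e_1, e_2, \ldots, e_k$ with consecutive shared vertices'' would have length growing with $k$ and thus with the graph. The separator characterization sidesteps this by trading the variable-length existential over sequences for a single universal quantifier over vertex sets, which is the entire point of moving from first-order to monadic second-order logic. Once that trick is in hand, the remaining verifications are routine syntactic checks that each clause is constant-size and uses only the allowed $\mathrm{MS}_2$ primitives listed in the paper.
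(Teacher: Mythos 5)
The paper does not actually prove this lemma; it simply cites \cite[Corollary 6.1.2]{courcelle's-theorem-overview-and-applications}, so your explicit construction is a genuinely different (self-contained) route, and in substance it is correct. Two remarks. First, your separator clause is the one truly second-order ingredient, but as written it is redundant: once the degree clauses hold, $x$ and $y$ are the only odd-degree vertices of $(V,P)$, so they must be the two endpoints of the unique path component of $(V,P)$ and are automatically connected there; the degree clauses buy you that $P$ is a path plus possibly some stray cycles, while the separator clause alone (applied to an arbitrary edge set, even $P=E$) would already suffice to express connectivity of $x$ and $y$. Either half of your formula does the job for the existential closure $\exists P\,\mathrm{Path}(P,x,y)$; conjoining both is harmless. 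Second, there is a small genuine gap in the degenerate case $x=y$: your degree clauses force $x$ to have odd degree in $(V,P)$ while all other vertices have even degree, which violates the handshake lemma, so no $P$ satisfies the formula and $\exists P\,\mathrm{Path}(P,x,x)$ comes out false even though a vertex is connected to itself. This matters for the paper's later use of $\mathrm{Path}$ inside $\mathrm{Connected}(U)$, which quantifies over all pairs $x,y\in U$ including $x=y$; the fix is the one-symbol disjunct $x=y$ at the front of the formula. With that patch, every clause is constant-size and uses only the listed $\mathrm{MS}_2$ primitives, so the lemma follows.
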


\begin{lemma} \label{properties-mso2}
    The following graph properties can be expressed in $\mathrm{MS}_2$:

    \begin{enumerate}[label=(\arabic*)]
        \item $\mathrm{Path}(P, X, Y)$, for subsets $X, Y \subseteq V(G)$ and $P\subseteq E(G)$, which expresses whether there is a path between some vertex of $X$ and some vertex of $Y$ using edges in $P$.
        \item $\mathrm{Disjoint}(E_1, E_2)$, for subsets $E_1, E_2 \subseteq E(G)$, which expresses whether the two edge sets are disjoint.
        \item $\mathrm{Connected}(U)$ for subset $U \subseteq V(G)$, which expresses whether the vertex set $U$ is connected.
        \item $\mathrm{Partition}(U_1, \ldots , U_k)$ for subsets $U_1, \ldots , U_k \subseteq V(G)$ ,which expresses whether vertex sets $U_1, \ldots , U_k$ are disjoint and completely cover $V(G)$. 
    \end{enumerate} 
\end{lemma}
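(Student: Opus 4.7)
The plan is to exhibit an explicit MS$_2$ formula for each of the four properties, using the atomic primitives listed in the excerpt and the fact from the cited lemma that $\mathrm{Path}(P,x,y)$ is expressible as a constant-size MS$_2$ formula. Since MS$_2$ permits quantification over both vertices and edges and over sets of either, each property can be encoded directly; no recursion or auxiliary machinery is needed.

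First I would handle (1) by relativising the vertex-pair Path relation: a path exists between some vertex of $X$ and some vertex of $Y$ iff
\[
\exists x\, \exists y\, \exists P\, \bigl(x \in X \,\land\, y \in Y \,\land\, \mathrm{Path}(P,x,y)\bigr),
\]
which is of constant size since $\mathrm{Path}$ is. Property (2) is immediate:
\[
\mathrm{Disjoint}(E_1,E_2)\;\equiv\;\neg\, \exists e\,(e\in E_1 \land e\in E_2).
\]
Property (4) is also routine: for a fixed $k$, the partition condition is the conjunction of the covering condition $\forall v\, \bigvee_{i=1}^k v\in U_i$ and pairwise disjointness $\bigwedge_{1\le i<j\le k} \neg\exists v\,(v\in U_i \land v\in U_j)$, giving a formula whose size depends only on $k$.

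The step that requires a little more care is (3), connectedness of the induced subgraph on $U$. The cleanest approach is the cut characterisation: $U$ is connected in $G$ iff no nontrivial partition of $U$ into two vertex subsets is uncrossed by any edge. Formally,
\[
\mathrm{Connected}(U)\;\equiv\;\forall A\, \Bigl((A\subseteq U \,\land\, \exists u\,(u\in A) \,\land\, \exists v\,(v\in U \land v\notin A))
\;\to\; \exists e\,\exists u\,\exists v\,(u\in A \land v\in U \land v\notin A \land \mathrm{inc}(e,u) \land \mathrm{inc}(e,v))\Bigr),
\]
where $A\subseteq U$ abbreviates $\forall w\,(w\in A \to w\in U)$. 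This is the only slightly subtle step; an alternative would be to assert $\mathrm{Path}(P,x,y)$ for every pair $x,y\in U$ and force $P$ to lie inside the edges with both endpoints in $U$, but the cut formulation is simpler and avoids having to restrict the path's edges.

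The main obstacle, insofar as there is one, is just deciding on the right primitive to use for (3): one has to be sure the chosen formula really captures connectedness in the induced subgraph and not some weaker notion (e.g.\ that $U$ lies in a connected portion of $G$). Once the cut-formulation is written down and verified, all four formulas have size independent of the graph (and, for (4), dependent only on the fixed $k$), so they qualify as MS$_2$ formulas as claimed.
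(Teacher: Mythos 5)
Your formulas for (1), (2), and (4) coincide with the paper's up to trivial logical rewriting (the paper writes Disjoint as $\forall e\,[e\in E_1\to e\notin E_2]$ and spells out the partition condition the same way, with size $O(k^2)$). The one place you genuinely diverge is (3): the paper expresses $\mathrm{Connected}(U)$ via the path formulation you mention only as an alternative, namely
\[
\forall x,y\,\bigl[x\in U\land y\in U\;\to\;\exists P\,\bigl(\mathrm{Path}(P,x,y)\land\forall e\,[e\in P\to\forall z\,(\mathrm{inc}(e,z)\to z\in U)]\bigr)\bigr],
\]
reusing the constant-size $\mathrm{Path}$ predicate from the preceding lemma and relativising the path's edges to $U$, whereas you use the cut characterisation (every nonempty proper subset $A\subseteq U$ is joined to $U\setminus A$ by an edge with both endpoints in $U$). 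Both are standard and both correctly capture connectedness of the induced subgraph rather than the weaker notion you rightly flag; your version has the mild advantage of not depending on the exact semantics of the edge-set variable $P$ in the Path predicate, while the paper's version leans on the already-cited primitive and so requires no new verification. Either yields a constant-size formula, so the lemma's conclusion is unaffected; your proof is correct.
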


\begin{proof}
    We express them as follows:

    \begin{enumerate}[label=(\arabic*)]
        \item $\mathrm{Path}(P, X, Y) = \exists x, y \left[ x \in X \land y \in Y \land \mathrm{Path}(P, x, y) \right]$
        
        \item $\mathrm{Disjoint}(E_1, E_2) = \forall e \left[e \in E_1 \to e \notin E_2 \right]$
        
        \item $\mathrm{Connected}(U) = \forall x,y \left[ x \in U \land y \in U \to \exists P.  \mathrm{Path}(P, x, y) \land \forall e \left[e \in P \to [\mathrm{inc}(e, z) \to z \in U \right] \right]$
        
        \item $\mathrm{Partition}(U_1, \ldots , U_k) = \forall x
        \left[ \bigvee_{i=1}^{k} x \in X_i \right] \land \neg \exists y. \left[ \bigvee_{i \neq j}^{k} (y \in X_i \land y \in X_j) \right]$
    \end{enumerate}

    We also note that formulae for Path, Disjoint, and Connected have constant size. The formula for Partition has size $O(k^2)$.  
\end{proof}

By a simple invocation of Courcelle's theorem, we can then show Theorem \ref{dsn-fpt}.

\begin{proof}[Proof of Theorem \ref{dsn-fpt}]
    We express $\dsn(G) \geq k$ in $\mathrm{MS}_2$ as follows:
    \begin{align*} 
        \exists U_1, \ldots , U_k \vast[ \mathrm{Partition}(U_1, \ldots , U_k) \land  \bigwedge_{i=1}^{k} \mathrm{Connected}(U_i)
        \land \bigwedge_{i \neq j}^{k} \left[ \exists P_1, \ldots , P_k \left[ 
        \bigwedge_{l=1}^{k} \mathrm{Path}(P_l, U_i, U_j) \land \bigwedge_{s \neq t}^{k} \mathrm{Disjoint}(P_s, P_t) \right] \right] \vast] 
    \end{align*}
    By Lemma \ref{properties-mso2}, each graph property used has size $h(k)$ for some computable $h$. Therefore, we have$|\varphi| = g(k)$ for some computable $g$. By Theorem \ref{courcelle}, there exists an algorithm that decides whether $\dsn(G) \geq k$ in time $O(f(|\varphi|, \tw(G)) \cdot n ) = O(f(g(k), \tw(G)) \cdot n)$ for some computable function $f$. 
\end{proof} 

Our study of the computational complexity of these problems naturally leads us to ask about stronger results regarding scramble number and to further answer questions regarding the complexity of both scramble number and disjiont scramble number.

\begin{question}
    Is scramble number fixed parameter tractable?
\end{question}

We remark that a similar application of Courcelle's Theorem is not admissible due to Theorem \ref{exponential-carton-number}, since quantification over exponentially many sets may be necessary. From \cite{sn_two}, we know that there exists a polynomial time algorithm to check whether $\sn(G) \geq k$ for all $k \leq 3$. However, the question remains open in full generality. 

\begin{question}
    Is disjoint scramble number NP-Hard to compute?
\end{question}

From \cite{echavarria2021scramble} we know that scramble number is NP-Hard to compute, but the same is unclear for disjoint scramble number. We conjecture that deciding $\dsn(G) \geq k$ is NP-Hard, thereby making the problem NP-Complete.  

\subsection{Approximability of scramble number and gonality} \label{approximation-section} 

Gijwisjt et al. show in \cite{gijswijt2019computing} that computing graph gonality is APX-Hard. While this is reason to believe that gonality cannot be efficiently approximated to a constant factor in general, there may exist efficient approximation algorithms for specific classes of graphs. The approximability of scramble number is unknown in the general case. We outline a few specific classes of graphs that admit a constant factor approximation for scramble number and gonality. 

\begin{proposition} \label{2-approximation}
    (Gavril's Algorithm \cite[pg. 432]{combinatorial_optimization_algorithms_and_complexity}) Let $G$ be a simple graph on $n$ vertices. Then, $n - \alpha(G)$ (also equal to the size of the \textit{minimum vertex cover} of $G$) can be 2-approximated in polynomial time.
\end{proposition}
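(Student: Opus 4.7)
The plan is to use the classical greedy matching argument. The algorithm will be: compute a maximal matching $M$ of $G$ greedily in polynomial time (repeatedly pick any edge whose endpoints are both still unmatched), and output the set $C$ consisting of both endpoints of every edge in $M$. Clearly $|C| = 2|M|$ and the algorithm runs in $O(|E(G)|)$ time, so the only work is verifying that $C$ is a vertex cover and that $|C| \leq 2(n - \alpha(G))$.

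First I would argue that $C$ is a vertex cover. Suppose for contradiction some edge $(u,v) \in E(G)$ has neither endpoint in $C$. Then neither $u$ nor $v$ is matched in $M$, so $M \cup \{(u,v)\}$ is still a matching, contradicting the maximality of $M$. Hence every edge of $G$ has at least one endpoint in $C$, so $C$ is a vertex cover, and consequently $|C| \geq n - \alpha(G)$ (since minimum vertex cover equals $n - \alpha(G)$ by König-style complementary reasoning: the complement of any vertex cover is an independent set, and vice versa).

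Next I would show $|C| \leq 2(n - \alpha(G))$. Any vertex cover $C^*$ must contain at least one endpoint of each edge in $M$. Since the edges of $M$ are pairwise vertex-disjoint, the endpoints chosen for different edges of $M$ are distinct, giving $|C^*| \geq |M|$. Taking $C^*$ to be a minimum vertex cover, this yields $n - \alpha(G) \geq |M|$, hence
\[
|C| = 2|M| \leq 2(n - \alpha(G)).
\]
Combined with the lower bound $|C| \geq n - \alpha(G)$, this shows that the algorithm returns a value within a factor of $2$ of the optimum $n - \alpha(G)$ in polynomial time, which is the required $2$-approximation.

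There is no real obstacle here; the entire content lies in the two-line maximality and disjointness observations. The only subtle point worth stating explicitly is the equality $n - \alpha(G) = \tau(G)$ (the minimum vertex cover size), which is immediate from the bijective correspondence between independent sets and complements of vertex covers in any simple graph.
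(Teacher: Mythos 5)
Your proof is correct and is exactly the classical maximal-matching argument behind Gavril's algorithm, which is what the paper invokes: the paper gives no proof of its own, simply citing the result and noting that it ``exploits the trivial relationship between maximal matchings and minimum vertex covers.'' Both the vertex-cover verification via maximality of $M$ and the lower bound $|M| \leq \tau(G) = n - \alpha(G)$ from vertex-disjointness are handled correctly, so there is nothing to add.
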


Gavril's algorithm is a classic folklore result in approximation algorithms that exploits the trivial relationship between maximal matchings and minimum vertex covers. In fact, we show that the stronger result of Theorem \ref{k-hitting-set} is true when generalized to the $k$-component independence number $\alpha_k(G)$. 

\begin{proof}[Proof of Theorem \ref{k-hitting-set}]
     A $k$-approximation algorithm for the $k$-hitting set problem is constructed in \cite{k-hitting-set-k-approx}. In particular, the paper presents an algorithm $A$ that takes a collection of sets $S$, where each set $s \in S$ has size $|s| \leq k$, and outputs a $k$-approximation of the minimum hitting number. This means that $h(S) \leq A(S) \leq k \cdot h(S)$. We consider the $k$-uniform scramble, $\varepsilon_k$, introduced in \cite{uniform_scrambles}. By \cite[Lemma 3.2]{uniform_scrambles}, $h(\varepsilon_k) = n - \alpha_{k-1}(G)$. Because all sets in $\varepsilon_k$ have size $k$, its hitting number can be $k$-approximated. 
\end{proof}

Theorem \ref{k-hitting-set} subsumes Proposition \ref{2-approximation}. Previous results, namely \cite[Theorem 3.2, Corollaries 3.1 and 3.2, Theorem 3.3]{uniform_scrambles}, give sufficient conditions for $\sn(G) = \gon(G) = n - \alpha_m(G)$ for various values of $m$. This immediately allows us to widen the class of graphs which admit a polynomial time, constant factor approximation algorithm for scramble number and gonality to those from Theorem \ref{gon-sn-approximation}.

\begin{proof}[Proof of Theorem \ref{gon-sn-approximation}]
    (1) By \cite[Corollary 3.1]{uniform_scrambles}, $\sn(G) = \gon(G) = n - \alpha_2(G)$. (2) By \cite[Corollary 3.2]{uniform_scrambles}, $\sn(G) = \gon(G) = n - \alpha_2(G)$. (3) By \cite[Theorem 3.3]{uniform_scrambles}, $\sn(G) = \gon(G) = n - \alpha_3(G)$. (4) By \cite[Theorem 3.2]{uniform_scrambles}, $\sn(G) = \gon(G) = n - \alpha(G)$. Theorem \ref{k-hitting-set} then shows all four claims.
\end{proof}

 Given Theorem \ref{k-hitting-set}, it is natural to ask whether the order of $k$-uniform scrambles approximate scramble number and/or gonality to a constant factor. The answer is more nuanced than it seems, but we now show a sufficient condition under which this is the case. 

\begin{proposition} \label{c-approx}
    Let $G$ be a graph with $\alpha(G) \leq \frac{c-1}{kc-1} \cdot n$ for some $c \in \mathbb{R}^{+}$ and $ k \in \mathbb{N}$ with $c > 1$. Then, $\frac{1}{c}(n - \alpha(G)) \leq n - \alpha_k(G) \leq n - \alpha(G)$. 
\end{proposition}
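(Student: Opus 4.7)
The plan is to establish the two inequalities separately, with the right-hand inequality being essentially immediate and the left-hand inequality reducing to the elementary bound $\alpha_k(G) \leq k\cdot \alpha(G)$, which then combines with the hypothesis on $\alpha(G)$ via a short algebraic manipulation.

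First I would dispatch the upper bound $n - \alpha_k(G) \leq n - \alpha(G)$. This is equivalent to $\alpha(G) \leq \alpha_k(G)$, which follows from the observation that any independent set $I$ realizes $\alpha_k(G)$-feasibility trivially: the induced subgraph $G[I]$ consists of isolated vertices, each a component of size $1 \leq k$. Hence every independent set is a $k$-component independent set.

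Next, for the main inequality $\frac{1}{c}(n - \alpha(G)) \leq n - \alpha_k(G)$, I would first show the elementary bound $\alpha_k(G) \leq k \cdot \alpha(G)$. Let $S \subseteq V(G)$ be a $k$-component independent set of maximum size, so $|S| = \alpha_k(G)$, and let $C_1, \ldots, C_t$ be the components of $G[S]$, where each $|C_i| \leq k$. Choosing one vertex from each $C_i$ produces a set $I$ of $t$ vertices; since vertices in distinct components of $G[S]$ are non-adjacent in $G$ (otherwise the induced edge would merge their components), $I$ is an independent set in $G$. Therefore $\alpha(G) \geq t \geq |S|/k = \alpha_k(G)/k$, giving $\alpha_k(G) \leq k \cdot \alpha(G)$ and consequently $n - \alpha_k(G) \geq n - k \cdot \alpha(G)$.

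Finally, I would verify that the hypothesis $\alpha(G) \leq \frac{c-1}{kc-1} n$ is precisely what is needed to conclude $n - k\alpha(G) \geq \frac{1}{c}(n - \alpha(G))$. Clearing denominators, this target inequality reads $c \cdot n - ck \cdot \alpha(G) \geq n - \alpha(G)$, i.e., $(c-1)n \geq (kc - 1)\alpha(G)$, which (since $c > 1$ and $k \geq 1$ ensure $kc - 1 > 0$) is exactly the assumed bound on $\alpha(G)$. Chaining, $n - \alpha_k(G) \geq n - k\alpha(G) \geq \frac{1}{c}(n - \alpha(G))$, completing the proof.

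There is no real obstacle here: the only nontrivial step is the elementary observation that selecting one vertex per component of a $k$-component independent set yields an ordinary independent set, and the remainder is a one-line algebraic rearrangement driven by the stated hypothesis.
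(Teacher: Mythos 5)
Your proof is correct and follows essentially the same route as the paper's: both establish $\tfrac{1}{k}\alpha_k(G) \leq \alpha(G) \leq \alpha_k(G)$ (via the one-representative-per-component argument and the trivial containment) and then rearrange the hypothesis on $\alpha(G)$ to chain $n - \alpha_k(G) \geq n - k\alpha(G) \geq \tfrac{1}{c}(n-\alpha(G))$. Your write-up is slightly more explicit about why the chosen representatives form an independent set, but there is no substantive difference.
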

\begin{proof}
    We will first show that $\frac{1}{k} \alpha_k(G) \leq \alpha(G) \leq \alpha_k(G)$. Note that $\frac{1}{k} \alpha_k(G) \leq \alpha(G)$ because choosing one representative from each component of a $k$-component independent set forms a $1$-component independent set. Secondly, $\alpha_k(G) \leq \alpha(G)$ because any $1$-component independent set is also a $k$-component independent set. It is then easy to see that 
    \begin{align*}  
        n - k\alpha(G) \leq n - \alpha_k(G) \leq n - \alpha(G)
    \end{align*}
    We have assumed that $\alpha(G) \leq \frac{c-1}{kc-1} \cdot n$, therefore $\frac{1}{c} (n - \alpha(G)) \leq n - k \alpha(G)$ by simply rearranging. We can then conclude that
    \begin{align*}
        \frac{1}{c}(n - \alpha(G)) \leq n - k\alpha(G) \leq n - \alpha_k(G) \leq n - \alpha(G)
    \end{align*}
    which shows the claim.
\end{proof}

More simply, Proposition \ref{c-approx} tells us that for graphs with small independence number, the hitting set of the $(k+1)$-uniform scramble is a $c$-approximation of $n - \alpha(G)$, which is an upper bound on the gonality (and therefore scramble number) of $G$  by \cite[Theorem 3.1]{gonality_of_random_graphs}. If $n-\alpha_k(G)$ is indeed the order of the $(k+1)$-uniform scramble, then combined with Theorem \ref{k-hitting-set} we can construct a $(k+1)c$ approximation to both $\sn(G)$ and $\gon(G)$. The following result formalizes this construction. 

\begin{proposition} \label{kc-approx}
    If $G$ is a graph with $\alpha(G) \leq \frac{c-1}{kc-1} \cdot n$ for some $c \in \mathbb{R}^{+}$ and $ k \in \mathbb{N}$ with $c > 1$ and $\lambda_{k+1}(G) \geq n - \alpha_k(G)$, then both $\sn(G)$ and $\gon(G)$ can be $(k+1)c$ approximated in polynomial time. 
\end{proposition}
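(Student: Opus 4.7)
The plan is to sandwich both $\sn(G)$ and $\gon(G)$ between $n-\alpha_k(G)$ and $c(n-\alpha_k(G))$, and then apply Theorem \ref{k-hitting-set} to approximate $n-\alpha_k(G)$ in polynomial time.

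First I would verify that, under the hypothesis $\lambda_{k+1}(G)\geq n-\alpha_k(G)$, the $(k+1)$-uniform scramble $\varepsilon_{k+1}$ has order exactly $n-\alpha_k(G)$. By \cite[Lemma 3.2]{uniform_scrambles} its hitting number is $h(\varepsilon_{k+1})=n-\alpha_k(G)$. For the egg-cut number, observe that any egg-cut of $\varepsilon_{k+1}$ must separate $G$ into two components each containing a connected subgraph on $k+1$ vertices, so $e(\varepsilon_{k+1})\geq \lambda_{k+1}(G)\geq n-\alpha_k(G)$. Hence $\|\varepsilon_{k+1}\|=n-\alpha_k(G)$, and consequently $\sn(G)\geq n-\alpha_k(G)$.

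Combining this with the standard bound $\sn(G)\leq \gon(G)\leq n-\alpha(G)$ from \cite[Theorem 3.1]{gonality_of_random_graphs} and Proposition \ref{c-approx}, which rearranges as $n-\alpha(G)\leq c(n-\alpha_k(G))$, would yield
\[n-\alpha_k(G)\ \leq\ \sn(G)\ \leq\ \gon(G)\ \leq\ c(n-\alpha_k(G)).\]
I would then invoke Theorem \ref{k-hitting-set} (with $k$ replaced by $k+1$) to obtain, in polynomial time, a value $A(G)$ satisfying $n-\alpha_k(G)\leq A(G)\leq (k+1)(n-\alpha_k(G))$.

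To finish, I would output $A(G)/(k+1)$ as the estimate of $\sn(G)$ and $c\cdot A(G)$ as the estimate of $\gon(G)$. The first satisfies
\[\sn(G)/((k+1)c)\ \leq\ (n-\alpha_k(G))/(k+1)\ \leq\ A(G)/(k+1)\ \leq\ n-\alpha_k(G)\ \leq\ \sn(G),\]
giving a $(k+1)c$-approximation in the maximization sense; the second satisfies
\[\gon(G)\ \leq\ c(n-\alpha_k(G))\ \leq\ c\cdot A(G)\ \leq\ c(k+1)(n-\alpha_k(G))\ \leq\ c(k+1)\gon(G),\]
giving a $(k+1)c$-approximation in the minimization sense. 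I do not expect any genuine obstacle: the only subtle step is the egg-cut bound $e(\varepsilon_{k+1})\geq \lambda_{k+1}(G)$, which is precisely where the connectedness of the eggs and the restricted-edge-connectivity hypothesis are both used; everything else is a short chain of inequalities assembled from results already established.
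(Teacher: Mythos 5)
Your proposal is correct and follows essentially the same route as the paper: establish $\|\varepsilon_{k+1}\| = n-\alpha_k(G)$ from the hypothesis $\lambda_{k+1}(G)\geq n-\alpha_k(G)$, sandwich $\sn(G)\leq\gon(G)\leq n-\alpha(G)\leq c(n-\alpha_k(G))$ via Proposition \ref{c-approx}, and rescale the output of the Theorem \ref{k-hitting-set} algorithm. The only cosmetic difference is that you return separate estimates for $\sn(G)$ and $\gon(G)$ while the paper uses the single value $A(G)/(k+1)$ for both, and you verify the egg-cut bound directly where the paper cites \cite[Theorem 3.1]{uniform_scrambles}.
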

\begin{proof}
    By Theorem \ref{k-hitting-set}, there exists a polynomial time approximation algorithm $A$ that outputs $(n - \alpha_k(G)) \leq A(G) \leq (k+1)(n - \alpha_k(G))$. Since $\lambda_{k+1} \geq n - \alpha_k(G)$, it follows that for the $k$-uniform scramble $\varepsilon_k$ on $G$, $||\varepsilon_k|| = n - \alpha_k(G) \leq \sn(G) \leq \gon(G) \leq n - \alpha(G)$ \cite[Theorem 3.1]{uniform_scrambles}. We construct algorithm $A'$ by setting $A'(G) = \frac{1}{k+1} \cdot A(G)$. Note then that $\frac{1}{k+1} (n - \alpha_k(G)) \leq A'(G) \leq n - \alpha_k(G)$. \\ \\
    Because $\alpha(G) \leq \frac{c-1}{kc-1} \cdot n$, Proposition \ref{c-approx} implies that $\frac{1}{c}(n - \alpha(G)) \leq n - \alpha_k(G)$. Thus, algorithm $A'$ guarantees $\frac{1}{c(k+1)}(n - \alpha(G)) \leq A'(G) \leq n - \alpha_k(G)$. Therefore, $\frac{1}{c(k+1)} \sn(G) \leq \frac{1}{c(k+1)} \gon(G) \leq A'(G) \leq \sn(G) \leq \gon(G)$ and $A'$ approximates both $\sn(G)$ and $\gon(G)$ to a factor of $(k+1)c$. 
\end{proof}

\begin{corollary}
    Let $G$ be a graph with $\alpha(G) \leq \frac{n}{k+1}$ for some $ k\in \mathbb{N}$ and $\lambda_{k+1}(G) \geq n - \alpha_k(G)$, then both $\sn(G)$ and $\gon(G)$ can be $k(k+1)$ approximated in polynomial time. 
\end{corollary}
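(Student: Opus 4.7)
The plan is to derive this corollary as a direct instantiation of Proposition \ref{kc-approx} with a carefully chosen constant, so essentially the entire proof will be algebraic. First I would set $c = k$ in Proposition \ref{kc-approx}; this choice is made precisely so that the approximation ratio $(k+1)c$ matches the target $k(k+1)$ in the corollary statement.

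Next, I would verify that the hypothesis on $\alpha(G)$ transforms correctly. With $c = k$ substituted into the bound $\alpha(G) \leq \frac{c-1}{kc-1} n$ from Proposition \ref{kc-approx}, we obtain
\[
\frac{c-1}{kc-1} = \frac{k-1}{k^2-1} = \frac{k-1}{(k-1)(k+1)} = \frac{1}{k+1},
\]
so the hypothesis becomes exactly $\alpha(G) \leq \frac{n}{k+1}$, matching the corollary's assumption. The second hypothesis, $\lambda_{k+1}(G) \geq n - \alpha_k(G)$, is already phrased identically in both statements and transfers without change.

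Finally, I would invoke Proposition \ref{kc-approx} with these verified hypotheses to conclude the existence of a polynomial time $(k+1)c = k(k+1)$ approximation algorithm for both $\sn(G)$ and $\gon(G)$, completing the proof. The only subtlety to keep in mind is the requirement $c > 1$ from Proposition \ref{kc-approx}, which forces $k \geq 2$; this is a mild restriction since the case $k = 1$ would just recover a trivial bound and is not the interesting regime. There is no real obstacle here — the work has all been done in Proposition \ref{kc-approx} and Theorem \ref{k-hitting-set}, and this corollary simply exhibits a clean parametric specialization whose approximation ratio depends quadratically on $k$.
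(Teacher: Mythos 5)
Your proposal is correct and follows exactly the paper's argument, which simply sets $c=k$ in Proposition \ref{kc-approx}; your explicit verification that $\frac{k-1}{k^2-1}=\frac{1}{k+1}$ and your remark about the $c>1$ constraint forcing $k\geq 2$ are just worked-out details of the same one-line specialization.
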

\begin{proof} 
    The claim follows by setting $c=k$ in Proposition \ref{kc-approx}
\end{proof}

It is important to note that the added condition of $\lambda_{k+1}(G) \geq n - \alpha_k(G)$ is somewhat restrictive. It does, however, expand the graphs whose scramble number and gonality can be approximated to a constant factor further than those in Theorem \ref{gon-sn-approximation}.


\section{Vertex Congestion and Screewidth} \label{congestion-section}

In light of Corollary \ref{corollary:minor_closed_bounded_deg}, it is natural to ask whether there are familiar families of graphs with low scramble number.  For instance, it is well-known that planar graphs have treewidth at most $O(\sqrt{n})$; what can we say for scramble number of planar graphs?  It turns out that we can study this question using the idea of \emph{vertex congestion} of a graph.  We remark that congestion has appeared with multiple characterizations in the literature; we use the definition from \cite{embedding-graphs-in-trees} as studied in \cite{treewidth-of-line-graphs}.

We define an \emph{embedding} to be an injection $\pi$ from the vertices of $G$ to the leaves a sub-cubic tree $T$. If $vw\in E(G)$, let $P_{vw}$ denote the path in $T$ from $\pi(v)$ to $\pi(w)$ in $T$.  The \emph{vertex congestion} of $\pi$ is the maximum over all vertices $u\in V(T)$ of the number of paths $P_{vw}$ passing through $u$; that is, the congestion equals
\[\max_u\in V(T)|\{vw\in E(G)\,|\, u\in P_{vw} \}|.\]
Finally, the \emph{vertex congestion} of $G$, denoted $\vcon(G)$, is the minimum vertex congestion over all embeddings $\pi$ of $G$.

In \cite{treewidth-of-line-graphs}, vertex congestion is characterized in terms of the treewidth of the \emph{line graph of $G$}, denoted $L(G)$; this is the graph  with vertex set $E(G)$ where two vertices are adjacent precisely when their corresponding edges are incident in $G$.
\begin{theorem} \label{theorem:vcon_tw} \cite[Theorem 2.4]{treewidth-of-line-graphs} 
For any graph $G$, $\vcon(G)=\tw(L(G))+1$.
\end{theorem}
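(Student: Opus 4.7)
The plan is to establish both inequalities $\vcon(G) \le \tw(L(G)) + 1$ and $\vcon(G) \ge \tw(L(G)) + 1$ separately, each by a direct translation between low-congestion embeddings of $G$ into subcubic trees and tree decompositions of $L(G)$ of small width.

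For the bound $\vcon(G) \ge \tw(L(G)) + 1$, I would fix an optimal embedding $\pi$ of $V(G)$ into the leaves of a subcubic tree $T$ with congestion $k = \vcon(G)$, and for every node $u \in V(T)$ set
\[
B_u \;=\; \{\, vw \in E(G) \,:\, u \in P_{vw}\,\}.
\]
The pair $(T, \{B_u\})$ will then be a tree decomposition of $L(G)$: every edge $vw$ of $G$ lies in some $B_u$ because $P_{vw}$ is non-empty; any two incident edges $uv, vw$ in $G$ both lie in $B_{\pi(v)}$, so every edge of $L(G)$ is covered; and the set of nodes containing a fixed $e = vw$ is exactly the vertex set of $P_{vw}$, which is connected. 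Since each bag has size at most $k$, this gives $\tw(L(G)) \le k - 1$ as required.

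For the reverse direction, I would begin with a tree decomposition $(T, \{B_t\})$ of $L(G)$ of width $w$. Because the edges of $G$ incident to any fixed vertex $v$ form a clique in $L(G)$, the Helly property for subtrees of a tree supplies a node $t_v \in V(T)$ whose bag $B_{t_v}$ contains every edge incident to $v$. I would build $T'$ by attaching a fresh leaf $\ell_v$ to each $t_v$ and setting $\pi(v) = \ell_v$. Each path $P_{vw}$ in $T'$ then runs from $\ell_v$ through $t_v$ along the $t_v$--$t_w$ subpath of $T$ into $\ell_w$, and every node it meets has $vw$ in its bag. This already bounds congestion by $w+1$ everywhere: at each original node $t$ by $|B_t|$, and at each leaf $\ell_v$ by $\deg(v) \le |B_{t_v}|$.

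The principal obstacle is that $T'$ need not be subcubic, since $t_v$ and its siblings may have large degree. I would resolve this by refining each high-degree node $t$ into a small binary subtree, replicating the bag $B_t$ on every new internal node and redistributing the former neighbours of $t$ among the new leaves of the subtree. Each original routing through $t$ becomes a path through the subtree meeting only nodes whose bags lie inside $B_t$, so the congestion bound $w+1$ is preserved, and the images $\pi(v)$ remain genuine leaves of the final subcubic tree. Verifying that the subcubic refinement truly preserves all routings simultaneously is the delicate technical step; once it is checked, combining both directions gives $\vcon(G) = \tw(L(G)) + 1$.
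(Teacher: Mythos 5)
The paper does not prove this statement itself---it is imported verbatim as \cite[Theorem 2.4]{treewidth-of-line-graphs}---so there is no in-paper argument to compare against. Your two-way translation (bags $B_u=\{vw: u\in P_{vw}\}$ giving a tree decomposition of $L(G)$ of width $\vcon(G)-1$, and conversely hanging a leaf $\ell_v$ off a bag containing the clique of edges at $v$ and then splitting high-degree nodes into binary subtrees with replicated bags) is correct and is essentially the proof given by Harvey and Wood in the cited source, including the routine but necessary subcubic-refinement step you flag.
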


To connect this to the treewidth of $G$, we recall the following result, presented in \cite{treewidth-of-line-graphs} but also following from results in \cite{atserias}, \cite{embedding-graphs-in-trees}, and \cite{multicuts}.

\begin{theorem}
    For a graph $G$ with maximum degree $\Delta(G)$, we have 
    \[\tw(L(G))\leq (\tw(G)+1)\Delta(G)-1\]
\end{theorem}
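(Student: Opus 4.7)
My plan is to produce a tree decomposition of $L(G)$ directly from a tree decomposition of $G$, after which the bound on $\tw(L(G))$ can simply be read off. Via Theorem \ref{theorem:vcon_tw}, this is equivalent to exhibiting an embedding of $G$ into a subcubic tree of vertex congestion at most $(\tw(G)+1)\Delta(G)$, but working with the line graph directly keeps the bookkeeping cleanest.

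Concretely, I would fix a tree decomposition $(T,\{X_t\}_{t\in V(T)})$ of $G$ of width $\tw(G)$, so that $|X_t|\le \tw(G)+1$ for every node $t$. For each $t$, define
\[
Y_t \;=\; \{\,e\in E(G) : e \text{ has at least one endpoint in } X_t\,\}.
\]
Since each vertex of $X_t$ is incident to at most $\Delta(G)$ edges of $G$, a union bound gives $|Y_t|\le |X_t|\cdot\Delta(G)\le (\tw(G)+1)\Delta(G)$. I then verify that $(T,\{Y_t\})$ is a tree decomposition of $L(G)$, whose vertex set is $E(G)$: every edge of $G$ belongs to some $Y_t$ (any bag containing one of its endpoints works); any two incident edges $vw,vw'$ of $G$ (i.e., adjacent vertices of $L(G)$) both lie in $Y_t$ for any $t$ with $v\in X_t$; and, for a fixed edge $e=vw$, the set $\{t : e\in Y_t\}=\{t : v\in X_t\}\cup\{t : w\in X_t\}$ is the union of two subtrees of $T$ that must overlap, because $vw\in E(G)$ forces some bag to contain both $v$ and $w$. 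Hence $\{t : e\in Y_t\}$ is itself a subtree, completing the verification and yielding $\tw(L(G))\le \max_t|Y_t|-1\le (\tw(G)+1)\Delta(G)-1$.

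The only nontrivial step is the subtree property for the $Y_t$: the union of two subtrees of a tree is itself a subtree precisely when they overlap. That overlap is forced by the defining property of tree decompositions applied to the edge $vw\in E(G)$, so the whole proof really reduces to naming the right collection $\{Y_t\}$ and tracking the size bound $(\tw(G)+1)\Delta(G)$. No auxiliary structure on $T$ (such as making it subcubic, or inflating nodes into gadgets, as one would do in a congestion-based argument) is needed.
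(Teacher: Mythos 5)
Your construction is correct and complete: the bags $Y_t=\{e\in E(G): e\cap X_t\neq\emptyset\}$ do form a tree decomposition of $L(G)$, the three axioms are verified exactly as you say, and the size bound $|Y_t|\le|X_t|\,\Delta(G)$ gives the claimed inequality. Note, however, that the paper does not actually prove this statement at all --- it recalls it as a known result, citing Harvey--Wood \cite{treewidth-of-line-graphs} together with \cite{atserias}, \cite{embedding-graphs-in-trees}, and \cite{multicuts}, and the surrounding section is organized around vertex congestion (via $\vcon(G)=\tw(L(G))+1$ from Theorem \ref{theorem:vcon_tw}) because that is the quantity the paper needs to compare with screewidth. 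So your argument is a genuinely different, and more self-contained, route: it bypasses embeddings into subcubic trees entirely and reads the bound off a direct tree decomposition of the line graph, which is arguably the cleanest way to obtain this particular inequality; the congestion-based formulation is what the paper's later results (Theorem \ref{theorem:scw_vcon} and the bound on $\sn$) actually require, but it is not needed for the treewidth statement itself. One small inaccuracy worth fixing: the union of two vertex-disjoint subtrees of a tree can still induce a connected subgraph (if they are joined by a tree edge), so ``precisely when they overlap'' overstates the equivalence --- but you only use the correct direction (overlapping subtrees have connected union), so the proof stands.
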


We now connect vertex congestion to screewidth.

\begin{theorem}\label{theorem:scw_vcon}
 For any graph $G$ with $|V(G)|\geq 3$, we have that $\scw(G)\leq \vcon(G)$.
\end{theorem}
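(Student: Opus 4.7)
The natural plan is to re-use the sub-cubic tree $T$ coming from an optimal embedding $\pi\colon V(G)\to L(T)$ realizing $\vcon(G)$ as the underlying tree of a tree-cut decomposition of $G$. Define the family $\mathcal{X}=(X_b)_{b\in V(T)}$ by setting $X_{\pi(v)}=\{v\}$ for each $v\in V(G)$ and $X_b=\emptyset$ for every remaining node of $T$ (every internal node, as well as any leaf not in the image of $\pi$). Since $\pi$ is an injection, each vertex of $G$ appears in exactly one bag, so $\mathcal{T}=(T,\mathcal{X})$ is a valid tree-cut decomposition, and it remains to show $w(\mathcal{T})\leq \vcon(G)$.

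The key observation is that for any edge $vw\in E(G)$, the path $P_{vw}$ in $T$ from $\pi(v)$ to $\pi(w)$ used to define congestion is precisely the path taken by that edge in the drawing of $\mathcal{T}$. Hence for any link $l=(a,b)\in E(T)$, the adhesion $\adh(l)$ is exactly the set of edges $vw$ whose path $P_{vw}$ contains $l$, and every such path also passes through both endpoints $a$ and $b$ of $l$. Therefore $|\adh(l)|$ is bounded by the number of $P_{vw}$ meeting $a$ (or $b$), which is at most $\vcon(G)$, so $\lw(\mathcal{T})\leq \vcon(G)$.

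For the bag width I would handle internal and leaf nodes separately. At each internal node $b$, the bag $X_b$ is empty and $|\adh(b)|$ counts the edges of $G$ tunneling through $b$; since no vertex of $G$ is placed at $b$, every path $P_{vw}$ that contains $b$ tunnels through it, so $|X_b|+|\adh(b)|$ equals the congestion at $b$, at most $\vcon(G)$. At each leaf $\ell$, the adhesion is empty by convention and $|X_\ell|\leq 1$. The hypothesis $|V(G)|\geq 3$ combined with connectivity of $G$ ensures $G$ has at least one edge, which forces $\vcon(G)\geq \Delta(G)\geq 1$, so the trivial leaf contribution is absorbed. Combining, $w(\mathcal{T})\leq \vcon(G)$, hence $\scw(G)\leq \vcon(G)$. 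I do not anticipate significant obstacles here: the construction is essentially a verbatim translation between the two frameworks, and the only delicacy is the leaf bookkeeping, which is precisely what the hypothesis $|V(G)|\geq 3$ is there to cover.
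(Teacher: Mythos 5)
Your proposal is correct and follows essentially the same route as the paper: both take the optimal congestion embedding as a tree-cut decomposition with singleton bags at leaves and empty bags at internal nodes, bound the bag width at internal nodes by the congestion there, and bound each link's adhesion by the congestion at an adjacent internal node. The only cosmetic difference is that the paper observes the link contribution is dominated by the adjacent non-leaf node's contribution rather than restating the congestion bound directly.
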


\begin{proof}
    Let $\pi$ be an embedding of $G$ into a subcubic tree $T$ with the minimum possible vertex congestion.   Note that $\pi$ yields a tree-cut decomposition $\mathcal{T}=(T,\pi)$, since $T$ is a tree and $\pi$ is a map from $V(G)$ to $V(T)$.
    
    We now consider the width of $\mathcal{T}$.  This equals the largest value among the following sets of numbers:
    \begin{itemize}
        \item The contribution of a leaf node.  This is maximized at $1$, since $\pi$ is an injection.
        \item The contribution of a non-leaf node (at least one exists since $|V(G)|\geq 3$, implying $T$ has at least $3$ leaves).  Non-leaf nodes $u$ have no vertices assigned to them by our choice of $\pi$, so the contribution of a non-leaf node equals the number of tunneling edges.  These are precisely the edges $vw\in E(G)$ such that $P_{vw}\in u$.  Thus the maximum contribution of a non-leaf node equals the vertex congestion of $\pi$.
        \item  The contribution of a link.  Any link $l$ is adjacent to at least one non-leaf node $u$, and the edges passing through $l$ are a subset of the tunneling edges of $u$ since $u$ has no vertices assigned to it.  Thus for any link, there exists a non-leaf node with at least as large of a contribution. 
    \end{itemize}

Taking the maximum of all these numbers, we find that $w(\mathcal{T})$ is equal to the vertex congestion of $\pi$, and thus to $\vcon(G)$.  We conclude that
\[\scw(G)\leq w(\mathcal{T})= \vcon(G).\]
\end{proof}

Combining these inequalities with the fact that $\sn(G)\leq \scw(G)$ \cite{screewidth-og}, we immediately obtain Theorem \ref{theorem:sn_at_most_tw_times_delta}: for any graph $G$ of maximum degree $\Delta(G)$, we have $\sn(G)\leq (\tw(G)+1)\Delta(G)-1$.

To apply this theorem to find graphs families with small scramble number, we recall the following result.

\begin{lemma} \label{lemma:tw_no_kr}
\cite{a_separator_theorem_nonplanar} A graph on $n$ vertices without a $K_r$ minor has treewidth at most $r^{1.5}\sqrt{n}$.
\end{lemma}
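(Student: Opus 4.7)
The plan is to derive this bound from the Alon--Seymour--Thomas separator theorem for $K_r$-minor-free graphs, which states that any such graph on $n$ vertices admits a vertex set $S$ of size $O(r^{1.5}\sqrt{n})$ whose removal leaves components each of size at most $2n/3$. From such a balanced separator theorem, a standard recursive argument produces a tree decomposition of width $O(r^{1.5}\sqrt{n})$, which gives the lemma after checking the constants.

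First I would establish the separator theorem, which is the combinatorial heart of the argument. The essential input is the Kostochka--Thomason density bound: a graph with no $K_r$ minor has average degree $O(r\sqrt{\log r})$, hence $O(r\sqrt{\log r}\cdot n)$ edges. The separator is then constructed by contradiction: assuming no balanced separator of size $s = c r^{1.5}\sqrt{n}$ exists, one iteratively grows a collection of pairwise disjoint connected vertex subsets (``clumps'') that will eventually serve as the branch sets of a $K_r$ minor. At each step, the failure of a small balanced separator guarantees many edge-disjoint paths between the current clumps and the rest of the graph, and a pigeonhole argument lets us contract along these paths to create new adjacencies between clumps. Careful bookkeeping on the number of clumps, their aggregate size, and the density bound produces exactly the exponent $r^{1.5}$ in the final bound.

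Second, to convert the separator theorem into a treewidth bound, I would recurse. Given $G$ on $n$ vertices with no $K_r$ minor, find the balanced separator $S$, let $C_1,\ldots,C_k$ be the components of $G-S$, and recursively construct tree decompositions $\mathcal{T}_i$ of each $G[C_i \cup S]$. Glue these together by a root bag equal to $S$, and add $S$ to every bag of each $\mathcal{T}_i$ (this preserves the tree-decomposition axioms). Since $|C_i \cup S| \leq 2n/3 + s$ and the $K_r$-minor-free property is inherited by subgraphs, each recursive call uses a separator of strictly smaller size, and the total bag size is bounded by
\[c r^{1.5}\left(\sqrt{n} + \sqrt{\tfrac{2n}{3}} + \sqrt{\tfrac{4n}{9}} + \cdots \right) = O(r^{1.5}\sqrt{n}),\]
a convergent geometric series. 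Choosing the constant $c$ and sharpening the separator constant by tracking dependencies in the density bound yields the stated $r^{1.5}\sqrt{n}$.

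The main obstacle is the separator theorem itself: producing the $K_r$ minor from the failure of a small balanced separator requires an intricate simultaneous control of the number of clumps, their individual sizes, and the number of cross-edges available at each step. Once this separator theorem is in hand, the recursive tree-decomposition construction and the geometric series estimate are routine.
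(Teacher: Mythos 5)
The paper does not actually prove this lemma: it is quoted as a black-box citation to Alon, Seymour, and Thomas \cite{a_separator_theorem_nonplanar}, so there is no internal argument to compare yours against. Judged on its own terms, your sketch correctly identifies the architecture of the AST proof (a density bound for $K_r$-minor-free graphs, plus a win/win argument that either produces a balanced separator or grows disjoint connected ``clumps'' into the branch sets of a $K_r$ minor), but the entire content of the theorem lives in the step you summarize as ``careful bookkeeping on the number of clumps, their aggregate size, and the density bound produces exactly the exponent $r^{1.5}$.'' That bookkeeping is essentially the whole of the AST paper; as written, your first stage restates the result rather than proving it.

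There is also a concrete gap in your second stage. The recursive glue-the-separator-into-every-bag construction, with subproblems of size at most $2n/3+s$, yields bags bounded by a geometric series summing to roughly $(1-\sqrt{2/3})^{-1}\approx 5.45$ times the separator size, i.e.\ $O(r^{1.5}\sqrt{n})$ with a genuine multiplicative constant --- not the literal bound $r^{1.5}\sqrt{n}$ asserted by the lemma. (You would also need to cut the recursion off once $n=O(r^{3})$, where $2n/3+s$ no longer shrinks, though there $\tw(G)\le n-1\le r^{1.5}\sqrt{n}$ holds trivially.) The constant-free form is obtained differently: AST in effect show that a $K_r$-minor-free graph admits, for every vertex subset $W$, a balanced $W$-separator of size at most $r^{1.5}\sqrt{n}$ --- equivalently, no bramble of order exceeding $r^{1.5}\sqrt{n}$ --- and the Seymour--Thomas duality $\tw(G)=\bn(G)-1$, which this paper already cites as \cite{st-graph-searching-treewidth}, then gives the stated bound with no loss. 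For the only use made of the lemma here (Corollary \ref{corollary:no_kr_minor_sn_sqrt} needs just $\tw(G)=O(\sqrt{n})$ for fixed $r$), your weaker $O(r^{1.5}\sqrt{n})$ would suffice, but it does not establish the lemma as stated.
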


\begin{corollary}\label{corollary:no_kr_minor_sn_sqrt}
Let $G$ be in a family of graphs with no $K_r$ minor and with bounded maximum degree.  Then $\sn(G) = O(\sqrt{n})$.
\end{corollary}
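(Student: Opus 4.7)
The plan is to combine the two preceding results directly. Fix a graph family $\mathcal{G}$ with no $K_r$ minor (for some fixed $r$) and with maximum degree bounded by some constant $d$. For any $G \in \mathcal{G}$ on $n$ vertices, Lemma \ref{lemma:tw_no_kr} gives $\tw(G) \leq r^{1.5}\sqrt{n}$, since $G$ excludes $K_r$ as a minor. Since $r$ is a fixed constant for the family, this is $O(\sqrt{n})$.

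Next, I would invoke Theorem \ref{theorem:sn_at_most_tw_times_delta}, which asserts $\sn(G) \leq (\tw(G)+1)\Delta(G) - 1$. Substituting the treewidth bound and the degree bound yields
\[
\sn(G) \leq (r^{1.5}\sqrt{n}+1)\,d - 1 = O(\sqrt{n}),
\]
where the implicit constant depends on $r$ and $d$ but not on $n$. This concludes the argument.

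There is essentially no obstacle here; the corollary is a mechanical combination of Theorem \ref{theorem:sn_at_most_tw_times_delta} with the classical sublinear treewidth bound for $K_r$-minor-free graphs. The only subtlety worth flagging is that both the exclusion of $K_r$ and the degree bound must be uniform across the family so that the $O(\sqrt{n})$ constant is well-defined; this is guaranteed by the hypotheses.
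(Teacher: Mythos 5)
Your proof is correct and matches the paper's argument exactly: both apply Lemma \ref{lemma:tw_no_kr} to bound $\tw(G)$ by $r^{1.5}\sqrt{n}$ and then plug this into Theorem \ref{theorem:sn_at_most_tw_times_delta} together with the degree bound. No issues to flag.
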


\begin{proof}
Theorem \ref{theorem:sn_at_most_tw_times_delta} gives us that \[\sn(G)\leq (\tw(G)+1)\Delta(G).\]
Since $\Delta(G)$ is assumed to be bounded and we have $\tw(G)=O(\sqrt{n})$ by Lemma \ref{lemma:tw_no_kr}, our claim follows.
\end{proof}

Since minor closed families of graphs with bounded maximum degree satisfy our assumptions, this subsumes Theorem \ref{corollary:minor_closed_bounded_deg}.  This corollary also implies, for instance, that planar graphs of bounded degree have $\sn(G)=O(\sqrt{n})$ since planar graphs have no $K_5$ minor.  The same argument holds if we replace ``planar graphs'' with ``graph of genus\footnote{Here by genus we mean the minimum genus of a surface onto which the graph may be embedded.  Chip-firing games on graphs often uses genus to alternately mean the first Betti number of the graph.} at most $g$'' for any fixed $g$, since such graphs have at least one forbidden $K_r$ minor.






Our work in this section also gives a quick proof of the following result, which also appears in \cite[Proposition 2.3]{treewidth-of-line-graphs}.
\begin{corollary} For any graph $G$, we have
    $\tw(L(G)) \geq \tw(G) - 1$.
\end{corollary}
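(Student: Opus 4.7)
The plan is to obtain the bound as an immediate corollary of the chain of inequalities assembled in this section, together with the standard bound $\tw(G) \leq \sn(G)$ recalled in the Preliminaries. The key observation is that every link in the chain has already been proven, so the proof is really just a matter of concatenating them in the right order.

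First I would recall the chain $\tw(G) \leq \sn(G) \leq \scw(G)$: the left inequality is part of the foundational result $\tw(G) \leq \sn(G) \leq \gon(G)$, and the right inequality is the cited bound $\sn(G) \leq \scw(G)$ from \cite{screewidth-og}. Next I would apply Theorem \ref{theorem:scw_vcon} to extend this to $\scw(G) \leq \vcon(G)$ (noting that we may assume $|V(G)| \geq 3$, since otherwise $\tw(G) \leq 1$ and the inequality $\tw(L(G)) \geq \tw(G) - 1$ is trivial). Finally, I would invoke Theorem \ref{theorem:vcon_tw} to rewrite $\vcon(G) = \tw(L(G)) + 1$.

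Stringing these together yields
\[
\tw(G) \leq \sn(G) \leq \scw(G) \leq \vcon(G) = \tw(L(G)) + 1,
\]
and rearranging gives the desired inequality $\tw(L(G)) \geq \tw(G) - 1$.

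There is essentially no obstacle here: the real content lies in the earlier Theorem \ref{theorem:scw_vcon} relating screewidth to vertex congestion, and in the already-cited identification of vertex congestion with $\tw(L(G)) + 1$. The only minor subtlety is handling the trivial small cases ($|V(G)| \leq 2$), which I would dispatch in a single sentence at the start.
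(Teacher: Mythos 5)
Your proof is correct and follows exactly the same chain of inequalities as the paper's proof, namely $\tw(G)\leq \sn(G)\leq \scw(G)\leq \vcon(G)=\tw(L(G))+1$. Your explicit handling of the case $|V(G)|\leq 2$ (needed because Theorem \ref{theorem:scw_vcon} assumes $|V(G)|\geq 3$) is a small point of care that the paper's version omits, but the argument is otherwise identical.
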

\begin{proof}
We know that $\tw(G)\leq \sn(G)$ by \cite[Theorem 1.1]{new_lower_bound}.  The claim immediately follows from Theorems \ref{theorem:vcon_tw} and \ref{theorem:scw_vcon} and the fact that $\sn(G)\leq \scw(G)$.
\end{proof}

We close with the following open question.
\begin{question}
    Do simple planar graphs $G$ on $n$ vertices satisfy $\sn(G)=O(\sqrt{n})$?
\end{question}
The answer is ``no'' if we drop the simple assumption, since for each $n$ there exists a planar multigraph on $n$ vertices with scramble number equal to $n$.  Consider, for instance, a multigraph constructed by duplicating edges in a path on $n$ vertices until each edge is repeated $n$ times; the vertegg scramble has order $\min\{|V(G)|,\lambda(G)\}=\min\{n,n\}=n$, and any scramble $\mathcal{S}$ on a graph has order at most $h(\mathcal{S})\leq n$.  Although we could subdivide the edges of such a graph to make it simple without changing its scramble number, the number of vertices would grow quadratically, so this does not lead to an example of faster-than-square-root growth of scramble number for simple planar graphs.

\bibliographystyle{plain}

\begin{thebibliography}{10}

\bibitem{a_separator_theorem_nonplanar}
Noga Alon, Paul Seymour, and Robin Thomas.
\newblock A separator theorem for nonplanar graphs.
\newblock {\em J. Amer. Math. Soc.}, 3(4):801--808, 1990.

\bibitem{atserias}
Albert Atserias.
\newblock On digraph coloring problems and treewidth duality.
\newblock {\em European J. Combin.}, 29(4):796--820, 2008.

\bibitem{k-hitting-set-k-approx}
R~Bar-Yehuda and S~Even.
\newblock A linear-time approximation algorithm for the weighted vertex cover
  problem.
\newblock {\em Journal of Algorithms}, 2(2):198--203, 1981.

\bibitem{courcelle's-theorem-overview-and-applications}
Samuel Barr.
\newblock {\em Courcelle’s Theorem: Overview and Applications}.
\newblock PhD thesis, Oberlin College, 2020.

\bibitem{embedding-graphs-in-trees}
Dan Bienstock.
\newblock On embedding graphs in trees.
\newblock {\em J. Combin. Theory Ser. B}, 49(1):103--136, 1990.

\bibitem{uniform_scrambles}
Lisa Cenek, Lizzie Ferguson, Eyobel Gebre, Cassandra Marcussen, Jason Meintjes,
  Ralph Morrison, Liz Ostermeyer, and Shefali Ramakrishna.
\newblock Uniform scrambles on graphs.
\newblock {\em Australas. J. Combin.}, 87:129--147, 2023.

\bibitem{screewidth-og}
Lisa Cenek, Lizzie Ferguson, Eyobel Gebre, Cassandra Marcussen, Jason Meintjes,
  Ralph Morrison, Liz Ostermeyer, Shefali Ramakrishna, and Ben Weber.
\newblock Scramble number and tree-cut decompositions, 2022.

\bibitem{sandpiles}
Scott Corry and David Perkinson.
\newblock {\em Divisors and Sandpiles: An Introduction to Chip-firing}.
\newblock American Mathematical Society, 2018.

\bibitem{multicuts}
Gruia C\u{a}linescu, Cristina~G. Fernandes, and Bruce Reed.
\newblock Multicuts in unweighted graphs and digraphs with bounded degree and
  bounded tree-width.
\newblock {\em J. Algorithms}, 48(2):333--359, 2003.

\bibitem{gonality_of_random_graphs}
Andrew Deveau, David Jensen, Jenna Kainic, and Dan Mitropolsky.
\newblock Gonality of random graphs.
\newblock {\em Involve}, 9(4):715--720, 2016.

\bibitem{dhar}
Deepak Dhar.
\newblock Self-organized critical state of sandpile automaton models.
\newblock {\em Phys. Rev. Lett.}, 64(14):1613--1616, 1990.

\bibitem{fundamentals_of_parameterized_complexity}
Rodney~G. Downey and Michael~R. Fellows.
\newblock {\em Fundamentals of parameterized complexity}.
\newblock Springer, 2016.

\bibitem{sn_two}
Robin Eagleton and Ralph Morrison.
\newblock Graphs of scramble number two.
\newblock {\em Discrete Math}, 346(10), Paper No. 113539, 13 pp., 2023.

\bibitem{echavarria2021scramble}
Marino Echavarria, Max Everett, Robin Huang, Liza Jacoby, Ralph Morrison, and
  Ben Weber.
\newblock On the scramble number of graphs.
\newblock {\em Discrete Appl. Math.}, 310:43--59, 2022.

\bibitem{poly-eggs}
David Eppstein.
\newblock Which graphs have polynomially many connected subgraphs?, Jan 2013.

\bibitem{gijswijt2019computing}
Dion Gijswijt, Harry Smit, and Marieke van~der Wegen.
\newblock Computing graph gonality is hard.
\newblock {\em Discrete Appl. Math.}, 287:134--149, 2020.

\bibitem{exponential_scramble_size}
Martin Grohe and Dániel Marx.
\newblock On tree width, bramble size, and expansion.
\newblock {\em Journal of Combinatorial Theory, Series B}, 99(1):218–228, Jan
  2009.

\bibitem{new_lower_bound}
Michael Harp, Elijah Jackson, David Jensen, and Noah Speeter.
\newblock A new lower bound on graph gonality.
\newblock {\em Discrete Appl. Math.}, 309:172--179, 2022.

\bibitem{treewidth-of-line-graphs}
Daniel~J. Harvey and David~R. Wood.
\newblock The treewidth of line graphs.
\newblock {\em Journal of Combinatorial Theory, Series B}, 132:157--179, 2018.

\bibitem{expander-graph-applications}
S.~Hoory and Michal Linial.
\newblock Expander graphs and their applications.
\newblock {\em Bulletin of the American Mathematical Society},
  43(04):439–562, Aug 2006.

\bibitem{scramble_construction}
Stephan Kreutzer and Siamak Tazari.
\newblock On brambles, grid-like minors, and parameterized intractability of
  monadic second-order logic.
\newblock {\em Symposium on Discrete Algorithms}, page 354–364, Jan 2010.

\bibitem{combinatorial_optimization_algorithms_and_complexity}
Christos~H Papadimitriou, Kenneth Steiglitz, and Dover Publications.
\newblock {\em Combinatorial Optimization: Algorithms and Complexity}.
\newblock Dover Publications, Inc, Mineola, 2014.

\bibitem{On-the-structure-of-k-trees}
H.~P. Patil.
\newblock On the structure of {$k$}-trees.
\newblock {\em J. Combin. Inform. System Sci.}, 11(2-4):57--64, 1986.

\bibitem{st-graph-searching-treewidth}
P.~D. Seymour and Robin Thomas.
\newblock Graph searching and a min-max theorem for tree-width.
\newblock {\em J. Combin. Theory Ser. B}, 58(1):22--33, 1993.

\bibitem{rooks_gonality}
Noah Speeter.
\newblock The gonality of rook graphs, 2022.

\bibitem{debruyn2014treewidth}
Josse van Dobben~de Bruyn and Dion Gijswijt.
\newblock Treewidth is a lower bound on graph gonality.
\newblock {\em Algebr. Comb.}, 3(4):941--953, 2020.

\bibitem{cartesian-product-edge-connectivity}
Jun-Ming Xu and Chao Yang.
\newblock Connectivity of cartesian product graphs.
\newblock {\em Discrete Mathematics}, 306(1):159--165, 2006.

\end{thebibliography}

\end{document}